\title{Perverse schobers and Orlov equivalences}
\author{Naoki Koseki and Genki Ouchi}
\date{}
\address{The University of Liverpool, Mathematical Sciences Building, Liverpool, L69 7ZL, UK.}
\email{koseki@liverpool.ac.uk}
\address{Graduate School of Mathematics, Nagoya University, Furocho, Chikusaku, Nagoya, Japan, 464-8602}
\email{genki.ouchi@math.nagoya-u.ac.jp}
\theoremstyle{plain}
\newtheorem{thm}{Theorem}[section]
\newtheorem{prop}[thm]{Proposition}
\newtheorem{def-prop}[thm]{Definition-Proposition}
\newtheorem{def-thm}[thm]{Definition-Theorem}
\newtheorem{def-lem}[thm]{Definition-Lemma}
\newtheorem{lem}[thm]{Lemma}
\newtheorem{cor}[thm]{Corollary}
\newtheorem*{thm*}{Theorem}
\theoremstyle{definition}
\newtheorem{defin}[thm]{Definition}
\newtheorem*{NaC}{Notation and Convention}
\newtheorem*{ACK}{Acknowledgement}
\theoremstyle{remark}
\newtheorem{rmk}[thm]{Remark}
\newtheorem{ex}[thm]{Example}
\DeclareMathOperator{\rk}{rk}
\DeclareMathOperator{\Spec}{Spec}
\DeclareMathOperator{\id}{id}
\newcommand{\dR}{\mathbf{R}}
\newcommand{\dL}{\mathbf{L}}
\newcommand{\bP}{\mathbb{P}}
\newcommand{\bC}{\mathbb{C}}
\newcommand{\bR}{\mathbb{R}}
\newcommand{\bQ}{\mathbb{Q}}
\newcommand{\bZ}{\mathbb{Z}}
\newcommand{\mcA}{\mathcal{A}}
\newcommand{\mcB}{\mathcal{B}}
\newcommand{\mcC}{\mathcal{C}}
\newcommand{\mcD}{\mathcal{D}}
\newcommand{\mcE}{\mathcal{E}}
\newcommand{\mcG}{\mathcal{G}}
\newcommand{\mcH}{\mathcal{H}}
\newcommand{\mcL}{\mathcal{L}}
\newcommand{\mcO}{\mathcal{O}}
\newcommand{\mcP}{\mathcal{P}}
\newcommand{\mcR}{\mathcal{R}}
\newcommand{\fK}{{\mathfrak K}}
\newcommand{\fL}{{\mathfrak L}}
\newcommand{\fP}{{\mathfrak P}}
\newcommand{\fS}{{\mathfrak S}}
\newcommand{\abs}{\mathrm{abs}}
\newcommand{\Dabs}{\mathrm{D}^{\mathrm{abs}}}
\DeclareMathOperator{\Hom}{Hom}
\DeclareMathOperator{\Coh}{Coh}
\DeclareMathOperator{\Sym}{Sym}
\DeclareMathOperator{\IC}{IC}
\DeclareMathOperator{\Perv}{Perv}
\DeclareMathOperator{\Cone}{Cone}
\DeclareMathOperator{\pt}{pt}
\DeclareMathOperator{\ST}{ST}
\DeclareMathOperator{\Br}{Br}
\DeclareMathOperator{\MF}{MF^{gr}}
\DeclareMathOperator{\HMF}{HMF^{gr}}
\DeclareMathOperator{\vect}{vect}
\DeclareMathOperator{\acyc}{acyc}
\DeclareMathOperator{\Fact}{Fact}
\DeclareMathOperator{\fact}{fact}
\DeclareMathOperator{\dgcat}{dgcat}
\DeclareMathOperator{\hodg}{hodgcat}
\DeclareMathOperator{\Mod}{Mod}
\DeclareMathOperator{\perf}{perf}
\DeclareMathOperator{\tri}{tri}
\DeclareMathOperator{\op}{op}
\DeclareMathOperator{\Isom}{Isom}
\DeclareMathOperator{\res}{res}
\DeclareMathOperator{\Auteq}{Auteq}
\DeclareMathOperator{\Aut}{Aut}
\DeclareMathOperator{\num}{num}
\DeclareMathOperator{\Fuk}{Fuk}
\DeclareMathOperator{\mirror}{mirror}
\begin{document}

\maketitle

\begin{abstract}
A perverse schober is a categorification 
of a perverse sheaf proposed by Kapranov--Schechtman. 
In this paper, we construct examples of perverse schobers 
on the Riemann sphere, 
which categorify the intersection complexes of 
natural local systems 
arising from the mirror symmetry for Calabi-Yau hypersurfaces. 
The Orlov equivalence plays a key role for the construction. 
\end{abstract}

\setcounter{tocdepth}{1}
\tableofcontents

\section{Introduction} \label{sec:intro}

\subsection{Motivation and Results}
A {\it perverse schober} is a conjectural categorification 
of a perverse sheaf introduced in a seminal paper \cite{ks15} 
by Kapranov--Schechtman. 
There is a well-known way to define 
the notion of local systems of categories, 
which are the simplest examples of perverse schobers. 
However, it is not clear how to define 
perverse sheaves of categories in general. 
At this moment, a general definition is only available 
on Riemann surfaces 
and on affine spaces stratified by hyperplane arrangements 
\cite{dkss21, ks15}. 
A key observation for the categorification 
in the case of Riemann surfaces 
is the classical result of 
\cite{bei87, ggm85, gmv96} that describes 
the category of perverse sheaves on a disk 
in terms of quiver representations. 
Quiver representations consist of linear algebraic data, 
and hence we can categorify them by replacing 
vector spaces and linear maps 
with categories and functors, respectively. 
More precisely, we will use the notion of 
{\it spherical functors} between dg categories 
developed in \cite{al17}. 
A lot of interesting classes of perverse schobers 
have been constructed via 
birational geometry \cite{bks18, don19a, don19b, svdb19, svdb20} 
and symplectic geometry \cite{dk21, ks15, kss20}. 

In this paper, we construct new examples of perverse schobers 
on the Riemann sphere $\bP^1$, 
which arise from mirror symmetry 
of Calabi-Yau hypersurfaces. 
Let $X \subset \bP^{n+1}$ be 
a smooth hypersurface of degree $n+2$, 
which is a Calabi-Yau variety of dimension $n$. 
Then there exists the mirror family of $X$ over 
$\bP^1 \setminus \{1, \infty \}$, 
with the unique orbifold point at $0 \in \bP^1$. 
By a variation of complex structures, 
we obtain a natural homomorphism 
\[
\pi_1(\bP^1\setminus \{0, 1, \infty\}) \to 
\Auteq(D^\pi\Fuk(X^\vee)), 
\]
where $X^\vee$ denotes the mirror of $X$. 
Applying the conjectural mirror symmetry equivalence 
$D^\pi\Fuk(X^\vee) \simeq D^b(X)$ 
(partly proved in \cite{pz98, sei15, she15}), 
we get a homomorphism 
\begin{equation} \label{eq:introLS}
\pi_1(\bP^1\setminus \{0, 1, \infty\}) \to \Auteq(D^b(X)), 
\end{equation}
which we think of as a local system of categories 
with the fiber $D^b(X)$. 
It is proved (cf. \cite{bh06, cir14, hor99}) 
that the morphism (\ref{eq:introLS}) 
maps the simple loops around the points 
$\infty, 1$ to the autoequivalences 
$(-) \otimes \mcO_X(1)$ and $\ST_{\mcO_X}$, respectively, 
where we put $\mcO_X(1) \coloneqq \mcO_{\bP^{n+1}}(1)|_X$, 
and $\ST_{\mcO_X}$ denotes the spherical twist 
around the structure sheaf. 
Note that by taking the cohomology group of $X$, 
and restricting it to the subring 
$\Lambda_H(X) \subset H^*(X, \bQ)$ 
generated by the hyperplane class, 
the homomorphism (\ref{eq:introLS}) induces 
a usual local system $L$ with the fiber $\Lambda_H(X)$: 
\begin{equation} \label{eq:introloc}
    L \colon 
    \pi_1(\bP^1\setminus \{0, 1, \infty\}) \to \Aut(\Lambda_H(X)). 
\end{equation}
There is a canonical way to extend $L$ 
as a perverse sheaf on $\bP^1$, 
called the {\it intersection complex} and denoted by $\IC(L)$. 

The aim of this paper is to give a categorification of 
the intersection complex $\IC(L)$: 

\begin{thm}[Theorems \ref{thm:schober1} and \ref{thm:decatIC}] 
\label{thm:introschober1}
There exists a perverse schober $\fP$ on $\bP^1$ 
which extends the local system (\ref{eq:introLS}) of categories. 
More precisely, 
the perverse schober $\fP$ is a categorification of 
the intersection complex $\IC(L)$ associated to 
the local system (\ref{eq:introloc}). 
\end{thm}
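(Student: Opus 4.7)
The plan is to separate the statement into its two components: first, the construction of the perverse schober $\fP$ on $\bP^1$ (Theorem \ref{thm:schober1}), and second, the identification of its decategorification with $\IC(L)$ (Theorem \ref{thm:decatIC}).

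For the construction, I intend to use the description of perverse schobers on a Riemann surface via a \emph{spider diagram}, following \cite{ks15}: fixing a basepoint of $\bP^1 \setminus \{0,1,\infty\}$ and three non-crossing paths to the singular points, the data of $\fP$ reduces to a generic fiber category (forced to be $D^b(X)$ by the local system \eqref{eq:introLS}) together with a spherical functor $F_i \colon \mcV_i \to D^b(X)$ at each singular point $i \in \{0,1,\infty\}$, whose induced twist autoequivalence realises the prescribed monodromy. At $i = 1$, the monodromy is $\ST_{\mcO_X}$, so I take $F_1 \colon D^b(\pt) \to D^b(X)$ to be $V \mapsto V \otimes \mcO_X$, whose spherical twist is by definition $\ST_{\mcO_X}$. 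At the other two singular points, the monodromies $(-) \otimes \mcO_X(1)$ at $\infty$ and the orbifold/Gepner monodromy at $0$ are related by Orlov's equivalence $D^b(X) \simeq \Dabs(\MF(\bC^{n+2}, W))$: under Orlov, tensoring by $\mcO_X(1)$ becomes the grading shift $\{1\}$ while the orbifold monodromy becomes a cyclic autoequivalence on the matrix-factorization side. This suggests realising $F_\infty$ and $F_0$ via natural Beilinson-type semi-orthogonal summands of $\MF(\bC^{n+2}, W)$ transported back along Orlov, so that the associated spherical twists produce the required autoequivalences. The relation $\rho(\gamma_0)\rho(\gamma_1)\rho(\gamma_\infty) = \id$ in $\Auteq(D^b(X))$ will then be built into the construction.

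Once the three spherical functors are in place, assembling them into a global perverse schober on $\bP^1$ is essentially formal in the framework of \cite{ks15}: one checks that the local data patch consistently along the spider's legs, which amounts to verifying that the compositions of the adjoints going around the basepoint reproduce the prescribed monodromies. This verification uses only the spherical adjunction relations satisfied by each $F_i$, and so reduces to checks already implicit in the construction.

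For the decategorification statement, I pass to the appropriate Grothendieck-theoretic invariant, namely the $K$-theoretic realisation restricted to the hyperplane-class subring $\Lambda_H(X) \subset H^*(X,\bQ)$. This produces a perverse sheaf on $\bP^1$ that restricts to $L$ on the complement of $\{0,1,\infty\}$; it remains to show that this perverse sheaf is the intersection complex rather than some other extension of $L$. By the Deligne minimality characterisation of $\IC(L)$, this reduces to checking that the nearby and vanishing cycle categories of $\fP$ at each singular point $i$ have the correct numerical invariants, so that no skyscraper summand appears at $i$. I expect this last step to be the main technical obstacle, since it requires controlling the rank of each $\mcV_i$ by an explicit $K_0$-computation; the point $i = 0$ will be the most delicate, because there the Orlov equivalence mixes the grading of $\MF(\bC^{n+2}, W)$ with the cohomology of $X$ in a non-trivial way, and one must trace this mixing through to confirm that the decategorified vanishing cycles at $0$ match those of $\IC(L)$.
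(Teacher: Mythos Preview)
Your outline has the right two-part shape, but there is a concrete error and a missing key idea in the construction half.

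The error: you write that ``under Orlov, tensoring by $\mcO_X(1)$ becomes the grading shift $\{1\}$''. This is false. The compatibility actually used (Theorem~\ref{thm:Orlov1}, following \cite{bfk12}) is that the grading shift $\tau$ on $\HMF(W)$ corresponds to the \emph{composite} $\ST_{\mcO_X} \circ ((-)\otimes\mcO_X(1))$ on $D^b(X)$. This is not cosmetic: since the product of the three monodromies on $\bP^1$ is trivial, the monodromy at $0$ is forced to be $\bigl(\ST_{\mcO_X} \circ ((-)\otimes\mcO_X(1))\bigr)^{-1}$, and it is precisely this composite that admits a natural realisation via the Orlov equivalence. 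Your separate treatment of $\infty$ and $0$ via Orlov cannot work as stated.

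The missing idea: the spherical functors at $\infty$ and $0$ are not obtained from ``Beilinson-type semi-orthogonal summands'' transported along Orlov. At $\infty$ the paper uses the push-forward $i_* \colon D^b(C) \to D^b(X)$ from a smooth hyperplane section $C \in |\mcO_X(1)|$; this is the standard divisor spherical functor with twist $(-)\otimes\mcO_X(1)$, and requires no matrix-factorization input. At $0$ the paper uses the push-forward $j_* \colon \HMF(W|_{\bC^{n+1}}) \to \HMF(W)$ along a generic linear section $j \colon \bC^{n+1} \hookrightarrow \bC^{n+2}$; a short computation shows its twist is $\tau$, which the Orlov compatibility then transports to the required autoequivalence of $D^b(X)$. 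The sign $\epsilon_0 = -1$ (dual twist) is what makes the product relation hold. For the decategorification, your plan is in the right spirit, but the paper's execution is more direct: via the quiver description of $\IC$ on a disk (Proposition~\ref{prop:ICcrite}), the check at $0$ reduces to showing that the monodromy has no nonzero invariants on $\Lambda_H(X)$, which is an elementary computation rather than an appeal to Deligne minimality.
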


A computation shows that 
the intersection complex $\IC(L)$ has 
a (anti-) symmetric pairing. 
In particular, it is Verdier self-dual. 
In Proposition \ref{prop:CYschober}, 
we will also prove that our perverse schober $\fP$ has 
a categorification of this property introduced in \cite{kss20}, 
see Definition \ref{def:CYschober} and 
Remark \ref{rmk:Vdual}. 

In the case of elliptic curves, 
we also construct a perverse schober $\fP^A$ on $\bP^1$ 
from the $A$-side and prove: 
\begin{thm}[Theorem \ref{thm:schoberA}] 
\label{thm:intromirror}
Let $X$ be an elliptic curve, $X^\vee$ its mirror. 
Then the perverse schober $\fP^A$ on $\bP^1$ 
has a generic fiber $D^\pi\Fuk(X^\vee)$, and 
it is identified with 
the perverse schober $\fP$ in Theorem \ref{thm:introschober1} 
under the mirror equivalence $D^b(X) \simeq D^\pi\Fuk(X^\vee)$. 
\end{thm}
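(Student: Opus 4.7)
The plan is to use the Kapranov--Schechtman \cite{ks15} description, under which a perverse schober on $\bP^1$ with fixed singular set $\{0,1,\infty\}$ is determined (up to equivalence) by its generic fiber together with the spherical adjunction data at each singular point, compatibly with the prescribed monodromy. Once this local-plus-generic data agrees under transport along the Polishchuk--Zaslow mirror equivalence $D^b(X) \simeq D^\pi\Fuk(X^\vee)$, the two schobers are equivalent. First I would match the generic fibers: $\fP^A$ has generic fiber $D^\pi\Fuk(X^\vee)$ by construction, while $\fP$ has generic fiber $D^b(X)$ by Theorem~\ref{thm:introschober1}, and these are identified via Polishchuk--Zaslow.

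Next I would verify that the spherical functors attached to the three singular points match. At $\infty$ (the large-volume point), the B-side monodromy $(-) \otimes \mcO_X(1)$ corresponds under Polishchuk--Zaslow to a unipotent shear of slopes of Lagrangians in $X^\vee$, so the local datum matches. At the conifold point $1$, the B-side local datum is the spherical adjunction underlying $\ST_{\mcO_X}$; by Polishchuk--Zaslow $\mcO_X$ is mirror to the zero-section Lagrangian on $X^\vee$, and by Seidel's theorem the Dehn twist around a Lagrangian sphere realizes the corresponding spherical twist on the A-side, so the spherical functors match. At the orbifold point $0$, the monodromy is of finite order and can be computed explicitly on both sides using the description of finite-order autoequivalences of an elliptic curve (resp.\ its symplectic mirror), so the local datum can again be compared directly. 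After matching all three pieces of local data, the reconstruction statement for perverse schobers yields $\fP^A \simeq \fP$ as perverse schobers on $\bP^1$, intertwining the generic fibers via Polishchuk--Zaslow.

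The main obstacle is the rigorous identification of the spherical \emph{functors} (as opposed to merely the monodromy autoequivalences they induce) at each singular point, especially at the orbifold point $0$, where the local model is more subtle than a single spherical adjunction. This requires leveraging Polishchuk--Zaslow's explicit object-by-object matching (line bundles to straight-line Lagrangians, skyscraper sheaves to transverse fibers) rather than only the derived equivalence at the triangulated level, and keeping track of the adjunction units and counits that produce the vanishing-cycle data on each side.
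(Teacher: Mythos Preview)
Your overall strategy---match generic fibers and then match the spherical-functor data at $0,1,\infty$ via Polishchuk--Zaslow---is the right shape, but you are working with the wrong $B$-side schober and this creates the very obstacles you flag at the end.

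The paper does \emph{not} compare $\fP^A$ directly to the schober $\fP$ of Theorem~\ref{thm:introschober1}. That schober uses the degree-$3$ line bundle $\mcO_{\bP^2}(1)|_X$, so the spherical functor at $\infty$ is $i_*\colon D^b(C)\to D^b(X)$ with $C$ three points, and the functor at $0$ involves $\HMF(W|_{\bC^2})$; neither of these has an obvious mirror description, and your remarks about ``unipotent shear'' and ``finite-order monodromy'' only address the induced autoequivalences, not the spherical functors themselves (which you correctly identify as the real difficulty). Instead, the paper first builds in Proposition~\ref{prop:schoberB} a \emph{modified} $B$-side schober $\fP^B$ tailored to elliptic curves: one replaces $\mcO_X(1)$ by the degree-$1$ bundle $\mcO_X(p)$, so that the spherical functor at $\infty$ becomes $\bC\mapsto \mcO_p$ (using that $\mcO_p$ is spherical with twist $\otimes\,\mcO_X(p)$), the functor at $1$ is $\bC\mapsto \mcO_X$, and the functor at $0$ is obtained formally from these two via Theorem~\ref{thm:compsph}. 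The crucial point is that now \emph{every} local datum is the spherical functor associated to a single spherical object.

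Once this is set up, the comparison is immediate rather than subtle: Polishchuk--Zaslow (Theorem~\ref{thm:mirrorell}) sends $\mcO_e\mapsto \mcL_0$ and $\mcO_X\mapsto \mcL_1$, hence transports the spherical functors $\bC\mapsto \mcO_e$ and $\bC\mapsto \mcO_X$ to $\bC\mapsto \mcL_0$ and $\bC\mapsto \mcL_1$, and the functor at $0$ is determined by these via Theorem~\ref{thm:compsph} on both sides. No separate analysis of Dehn twists, shears, or finite-order monodromy is needed; the whole proof is the sentence ``combine Proposition~\ref{prop:schoberB} with Theorem~\ref{thm:mirrorell}.'' Your proposal would become correct and short if you first carried out this degree-$1$ modification on the $B$-side.
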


\begin{rmk}
Our definition of perverse schobers is slightly more general than the original definition given in \cite{ks15}, 
see Definition \ref{def:schoberdisk} and Example \ref{ex:twodegat}. 
This generalization is essential for our construction. 
\end{rmk}

\subsection{Idea of proof}
To construct the perverse schober 
as in Theorem \ref{thm:introschober1}, 
we need to find spherical functors 
which induce the autoequivalences 
$(-)\otimes \mcO_X(1), \ST_{\mcO_X}$, and 
$\ST_{\mcO_X} \circ (\otimes \mcO_X(1))$. 
For the first two autoequivalences, 
we can find natural spherical functors 
using the derived categories of varieties. 

For the last equivalence 
$\ST_{\mcO_X} \circ (\otimes \mcO_X(1))$, 
we use the following Orlov equivalence

\[
D^b(X) \simeq \HMF(W), 
\]
to find the natural spherical functor,
where $W$ is the homogeneous polynomial 
defining $X \subset \bP^{n+1}$. 
We find that the autoequivalence 
$\ST_{\mcO_X} \circ (\otimes \mcO_X(1))$ 
has a natural presentation as the twist of a spherical functor 
between categories of graded matrix factorizations. 

Note that, for a given autoequivalence, 
there are many ways to express 
it as a twist of a spherical functor. 
We choose the natural expressions 
so that we recover the intersection complex 
after decategorification.

\subsection{Related works}
Various examples of perverse schobers, 
consisting of derived categories of varieties, 
have been constructed using birational geometry 
\cite{bks18, don19a, don19b, svdb19, svdb20}. 
Our perverse schober has a different origin from these examples, 
in the sense that it does not involve 
birational geometric structure. 
Moreover, it is the first example 
involving a Landau-Ginzburg model, 
which defines the category of graded matrix factorizations. 

Donovan--Kuwagaki \cite{dk21} verified 
a mirror symmetry type statement for some examples of 
perverse schobers coming from non-compact geometry, 
including the case of Atiyah flops. 
Theorem \ref{thm:intromirror} is an analogue of their result, 
for the case of elliptic curves.

\subsection{Open questions}
\begin{enumerate}
    \item It would be interesting to construct 
    a perverse schober using Fukaya categories, 
    which is mirror to our schober in higher dimension. 
    For example, in the case of a quartic K3 surface, 
    we need to understand mirror symmetry for the categories 
    $D^b(C)$ and $\HMF(\bC^3, W)$, 
    where $C$ is a smooth projective curve of genus $3$, 
    and $W$ is a homogeneous polynomial of degree $4$. 
    
    \item In the case of a quartic K3 surface, 
    we can think of the Riemann sphere $\bP^1$ as 
    a compactification of a certain quotient of 
    the space of normalized Bridgeland stability conditions. 
    It would be interesting to generalize our construction 
    to arbitrary K3 surfaces. 
\end{enumerate}

\subsection{Plan of the paper}
The paper is organized as follows: 
In Section \ref{sec:dgcat}, 
we review the theory of dg categories used in this paper. 
In particular, we recall the notion of spherical dg functors. 
In Section \ref{sec:def-schober}, we define the notion of 
perverse schobers on a Riemann surface following 
Kapranov--Schechtman \cite{ks15}. 
In Section \ref{sec:factorization}, 
we review the theory of derived factorization categories. 
In particular, we recall various constructions of 
dg enhancements of these categories. 

In Section \ref{sec:construct}, we construct our perverse schober. 
In Section \ref{sec:decat}, we prove that 
our perverse schober categorifies 
the intersection complex. 
In Section \ref{sec:mirror}, we discuss the mirror symmetry of 
perverse schobers for elliptic curves. 
Finally in Section \ref{sec:vgit}, 
we review the proof of Orlov equivalences 
via a variation of GIT quotients, following \cite{bfk19}. 
Using this, we construct an example of a spherical pair, 
which is another categorification of a perverse sheaf 
on a disk.

\begin{ACK}
The authors would like to thank Professor Arend Bayer, 
Dr.~Yuki Hirano, and Dr.~Kohei Kikuta for valuable discussions, 
and Professor Will Donovan for insightful comments on 
the previous version of this article. 
The authors would also like to thank the participants of 
the seminar on perverse schobers, 
held at the University of Edinburgh on Fall 2021, 
where they learned a lot about perverse schobers. 
In particular, they would like to thank Professor Pavel Safronov 
for organizing the seminar. 

N.K. was supported by 
ERC Consolidator grant WallCrossAG, no.~819864. 
G.O. is supported by JSPS KAKENHI Grant Number 19K14520. 

Finally, the authors would like to thank the referee for careful reading of the previous version of this paper and giving them a lot of useful comments. 
\end{ACK}

\begin{NaC}
Throughout the paper, we work over the complex number field $\bC$. 
\begin{itemize}
    \item For a variety $X$, $D^b(X)$ denotes 
    the bounded derived category of coherent sheaves on $X$. 
    \item For an integer $d \in \bZ$, 
    $\chi_d \colon \bC^* \to \bC^*$ denotes 
    the character defined by 
    $\chi_d(t) \coloneqq t^d$. 
\end{itemize}
\end{NaC}

\section{Quick review on dg categories} \label{sec:dgcat}
In this section, we briefly review the theory of dg categories 
used in this paper. 
We refer \cite{al17, kel94, kel06, toe07, toe11} 
for more details.

\subsection{Basic definitions}
We denote by $\dgcat_{\bC}$ 
the category of small dg categories over $\bC$. 
For a dg category $\mcA \in \dgcat_\bC$, 
we denote by $[\mcA]$ the {\it homotopy category} of $\mcA$. 

\begin{defin}
Let $\mcA, \mcB \in \dgcat_\bC$ be dg categories, 
and let $F \colon \mcA \to \mcB$ be a dg functor. 
We say that the functor $F$ is a {\it quasi-equivalence} 
if it satisfies the following two conditions: 
\begin{enumerate}
    \item For any objects $a, a' \in \mcA$, 
    the morphism 
    \[
    \Hom_\mcA(a, a') \to \Hom_\mcB(F(a), F(a')) 
    \]
    is a quasi-isomorphism. 
    
    \item The induced functor 
    $[F] \colon [\mcA] \to [\mcB]$ 
    on the homotopy categories is essentially surjective. 
\end{enumerate}

We denote by $\hodg_\bC$ the localization of $\dgcat_\bC$ 
by quasi-equivalences. 
A morphism in $\hodg_\bC$ is called a {\it quasi-functor}.  
\end{defin}

\subsubsection{Dg modules and derived categories}

We denote by $\Mod_\bC$ 
the dg category of complexes of $\bC$-vector spaces. 
Let $\mcA \in \dgcat_{\bC}$ be a dg category. 
A {\it right $\mcA$-module} is a dg functor 
$\mcA \to \Mod_\bC$. 
We denote by $\Mod_\mcA$
the dg category of right $\mcA$-modules. 
We have the {\it dg Yoneda embedding} 
\begin{equation} \label{eq:dgYoneda}
\mcA \hookrightarrow \Mod_\mcA, \quad 
a \mapsto \Hom_\mcA(-, a). 
\end{equation}

\begin{defin}
\begin{enumerate}
\item An object $C \in \Mod_\mcA$ is {\it acyclic} if 
for every $a \in \mcA$, 
the complex $C(a) \in \Mod_\bC$ is acyclic. 

\item An object $P \in \Mod_\mcA$ is {\it projective} if 
for every acyclic module $C \in \Mod_\mcA$, 
we have $\Hom_{\Mod_\mcA}(P, C)=0$. 
\end{enumerate}
\end{defin}

We denote by $\mcP(\mcA) \subset \Mod_\mcA$ 
the dg subcategory consisting of projective modules. 
The {\it derived category} $D(\mcA)$ is 
the localization of the homotopy category $[\Mod_\mcA]$ 
by acyclic $\mcA$-modules. 
The derived category $D(\mcA)$ has 
the structure of a triangulated category, 
and we have a canonical equivalence 
$[\mcP(\mcA)] \simeq D(\mcA)$. 

We denote by $\perf(\mcA) \subset D(\mcA)$ 
the full triangulated subcategory consisting of compact objects. 
A right $\mcA$-module is called {\it perfect} if 
its class in the derived category $D(\mcA)$ is compact. 
We denote by $\mcP^{\perf}(\mcA) \subset \mcP(\mcA)$ 
the dg subcategory of perfect projective modules. 
Note that we have an equivalence 
$[\mcP^{\perf}(\mcA)] \simeq \perf(\mcA)$. 

The dg Yoneda embedding (\ref{eq:dgYoneda}) 
induces the embedding 
\[
[\mcA] \hookrightarrow D(\mcA). 
\]
Note that the homotopy category $[\mcA]$ 
is not triangulated in general. 
We define the triangulated category 
$\tri(\mcA) \subset D(\mcA)$ to be 
the smallest triangulated subcategory containing $[\mcA]$. 
Then we have the following inclusions: 
\[
[\mcA] \subset \tri(\mcA) \subset \perf(\mcA) \subset D(\mcA). 
\]

\begin{defin}
A dg category $\mcA \in \dgcat_\bC$ is 
{\it pre-triangulated} (resp. {\it triangulated}) if 
the inclusion $[\mcA] \subset \tri(\mcA)$ 
(resp. $[\mcA] \subset \perf(\mcA)$) is an equivalence.
\end{defin}

\begin{rmk}
A pre-triangulated dg category $\mcA$ is triangulated 
if and only if its homotopy category $[\mcA]$ 
is idempotent complete 
(cf. \cite[Theorem 3.8]{kel06}). 
\end{rmk}

\subsubsection{Bimodules}
Let $\mcA, \mcB \in \dgcat_\bC$ be dg categories. 
An {\it $\mcA$-$\mcB$-bimodule} is 
an $\mcA^{\op} \otimes \mcB$-module. 
We denote by $_\mcA \Mod_\mcB$ the dg category of 
$\mcA$-$\mcB$-bimodules, 
and by $D(\mcA \mathchar`- \mcB)$ its derived category. 

\begin{defin}
Let $M$ be an $\mcA$-$\mcB$-bimodule. 
We say that $M$ is {\it $\mcA$-perfect} 
(resp. {\it $\mcB$-perfect}) if 
$M(b) \in \Mod \mathchar`- \mcA^{\op}$ 
(resp. $M(a) \in \Mod \mathchar`- \mcB$)
is perfect for all $b \in \mcB$ (resp. $a \in \mcA$). 

We denote by 
$D^{\mcA \mathchar`- \perf}(\mcA \mathchar`- \mcB)$ 
(resp. $D^{\mcB \mathchar`- \perf}(\mcA \mathchar`- \mcB)$) 
the full triangulated subcategory of 
$D(\mcA \mathchar`- \mcB)$ consisting of 
$\mcA$-perfect (resp. $\mcB$-perfect) bimodules. 
\end{defin}

Given a bimodule $M \in {_\mcA \Mod_\mcB}$, 
we have the {\it tensor product} functor 
\begin{align*}
    &(-) \otimes_\mcA M \colon \Mod_\mcA \to \Mod_\mcB, \quad 
    a \mapsto (b \mapsto b \otimes_\mcA M(a)), 
\end{align*}
and its derived functor 
\[
(-) \otimes^\dL_\mcA M \colon D(\mcA) \to D(\mcB). 
\]
Similarly, we have the functors 
\[
M \otimes_\mcB (-) \colon 
\Mod_{\mcB^{\op}} \to \Mod_{\mcA^{\op}}, \quad 
M \otimes^\dL_\mcB (-) \colon 
D(\mcB^{\op}) \to D(\mcA^{\op}). 
\]

We have the following characterizations of 
$\mcA$-perfect and $\mcB$-perfect bimodules
(see the first and second paragraphs in \cite[p2590]{al17}): 
\begin{itemize}
    \item $M$ is $\mcA$-perfect if and only if 
    the derived tensor product $(-) \otimes^\dL_\mcA M$ restricts 
    to the functor $\perf(\mcA) \to \perf(\mcB)$. 
    \item $M$ is $\mcB$-perfect if and only if 
    the derived tensor product $M \otimes^\dL_\mcB (-)$ restricts 
    to the functor $\perf(\mcB^{\op}) \to \perf(\mcA^{\op})$. 
\end{itemize}

\subsection{Dg enhancements}

We first define the notion of {\it dg enhancements}: 

\begin{defin}
Let $\mcD$ be a triangulated category. 
\begin{enumerate}
\item A {\it dg enhancement} of $\mcD$ is 
a pair $(\mcA, \epsilon)$ consisting of 
a pre-triangulated dg category $\mcA$ and an exact equivalence 
$\epsilon \colon [\mcA] \xrightarrow{\sim} \mcD$. 

\item A {\it Morita enhancement} of $\mcD$ is 
a pair $(\mcA, \eta)$ consisting of 
a dg category $\mcA$ and an exact equivalence 
$\eta \colon \perf(\mcA) \xrightarrow{\sim} \mcD$. 
\end{enumerate}
\end{defin}

\begin{rmk}
Suppose that a dg category $\mcA$ is triangulated. 
If we have a dg enhancement 
$(\mcA, \epsilon)$ of a triangulated category $\mcD$, 
it also gives a Morita enhancement via 
\[
\perf(\mcA) \simeq [\mcA] \xrightarrow{\epsilon} \mcD.  
\]
\end{rmk}

\begin{ex}
Let $\mcA \in \dgcat_\bC$ be a dg category. 
The dg category $\mcP^{\perf}(\mcA)$ 
is triangulated and 
gives a Morita enhancement of $\mcA$. 
\end{ex}

For our purpose, 
we also need the notion of enhancements 
of exact functors of triangulated categories.

\begin{thm}[\cite{toe07}] \label{thm:RHom}
Let $\mcA, \mcB \in \dgcat_\bC$ be dg categories. 
There exists a dg category 
$\dR\mcH om(\mcA, \mcB)$ with 
the following properties: 
\begin{enumerate}
    \item There exists a bijection 
    \[
    \Isom\left([\dR\mcH om(\mcA, \mcB)] \right) 
    = \Hom_{\hodg_\bC}(\mcA, \mcB), 
    \]
    where the left hand side denotes 
    the set of isomorphism classes of objects in 
    $[\dR\mcH om(\mcA, \mcB)]$. 
    
    \item There exists an equivalence 
    \[
    [\dR\mcH om(\mcP^{\perf}(\mcA), \mcP^{\perf}(\mcB))] 
    \simeq D^{\mcB \mathchar`- \perf}(\mcA \mathchar`- \mcB) 
    \]
    such that for an object 
    $M \in D^{\mcB \mathchar`- \perf}(\mcA \mathchar`- \mcB)$, 
    the corresponding exact functor 
    $[M] \colon \perf(\mcA) \to \perf(\mcB)$ 
    is $\otimes^\dL_\mcA M$. 
\end{enumerate}
\end{thm}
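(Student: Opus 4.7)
The plan is to invoke Toën's derived Morita theory \cite{toe07} in two steps, first producing the internal hom at the level of all dg categories and then identifying its homotopy category with bimodules when the source and target are replaced by their perfect projective enhancements.

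First I would equip $\dgcat_\bC$ with the Tabuada model structure, whose weak equivalences are quasi-equivalences, so that the localization is precisely $\hodg_\bC$. The main technical fact I would cite from \cite{toe07} is that, although $\dgcat_\bC$ is not a monoidal model category under the naive tensor product (the tensor product does not preserve quasi-equivalences in general), the derived tensor product $\otimes^\dL$ descends to a symmetric monoidal structure on $\hodg_\bC$ which is closed, and the internal hom is the sought-after $\dR\mcH om(\mcA, \mcB)$. By the general adjunction $\Hom_{\hodg_\bC}(\mcC \otimes^\dL \mcA, \mcB) \cong \Hom_{\hodg_\bC}(\mcC, \dR\mcH om(\mcA, \mcB))$ applied to $\mcC = \bC$ (the unit), this gives the bijection in (1) between morphisms $\mcA \to \mcB$ in $\hodg_\bC$ and isomorphism classes in $[\dR\mcH om(\mcA, \mcB)]$.

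For (2), I would pass to the Morita model structure, where weak equivalences are the dg functors inducing equivalences on $\perf(-)$, and where the fibrant replacement of a dg category $\mcA$ is essentially $\mcP^{\perf}(\mcA)$. The key input is the derived Morita theorem: a cocontinuous dg functor $\Mod_\mcA \to \Mod_\mcB$ is represented, up to quasi-isomorphism, by a bimodule $M \in {_\mcA \Mod_\mcB}$ acting via $(-) \otimes^\dL_\mcA M$, and restriction to compact objects corresponds precisely to the $\mcB$-perfectness condition. I would then combine this with the internal hom of part (1) applied to the perfect enhancements to obtain the equivalence in (2), with the dg functor attached to $M$ being $\otimes^\dL_\mcA M$ by construction.

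The main obstacle is the derived Morita equivalence itself: namely, setting up a natural quasi-equivalence between $\dR\mcH om(\mcP^{\perf}(\mcA), \mcP^{\perf}(\mcB))$ and (a dg enhancement of) $D^{\mcB \mathchar`- \perf}(\mcA \mathchar`- \mcB)$. This requires showing both that every quasi-functor between the perfect categories is represented by a $\mcB$-perfect bimodule (an Eilenberg--Watts type statement in the dg setting, which forces the tensor-product description), and that the correspondence is homotopically well-defined, which in turn relies on cofibrant replacement of bimodules and the behavior of $\otimes^\dL$ with respect to compact generation. All of these are established in \cite{toe07}, so in practice the proof reduces to assembling the cited results and verifying that the composition with the dg Yoneda embedding recovers $\otimes^\dL_\mcA M$ on the nose.
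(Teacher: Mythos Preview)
Your proposal is correct and aligns with the paper's approach: the paper's proof consists solely of three citations to \cite{toe07} (Theorem~6.1 for the existence of $\dR\mcH om$, Corollary~4.8 for~(1), and Theorem~7.2 for~(2)), and what you have written is essentially an outline of the arguments behind those cited results. In other words, you have supplied more narrative than the paper does, but the substance is the same invocation of To\"en's derived Morita theory.
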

\begin{proof}
The existence of the dg category $\dR \mcH om (\mcA, \mcB)$ 
is proved in \cite[Theorem 6.1]{toe07}. 
The first assertion is \cite[Corollary 4.8]{toe07}; 
the second assertion is proved in \cite[Theorem 7.2]{toe07}. 
\end{proof}

\begin{defin}
Let $\Phi \colon \mcC \to \mcD$ 
be an exact functor of triangulated categories. 
Suppose that the triangulated categories $\mcC, \mcD$ have 
dg enhancements $\mcA, \mcB$, respectively. 
Then a {\it dg enhancement} of the functor $\Phi$ is 
an object $F \in [\dR\mcH om(\mcA, \mcB)]$ 
together with an isomorphism 
$[F] \simeq \Phi \colon \mcC \to \mcD$. 
\end{defin}

\subsection{Spherical functors}
Let $\mcA, \mcB \in \dgcat_\bC$ be dg categories, 
let $S \in D(\mcA \mathchar`- \mcB)$ be 
an $\mcA$-perfect and $\mcB$-perfect bimodule. 
Recall from Theorem \ref{thm:RHom} (2) that 
$S$ defines an isomorphism class of quasi-functors 
$\mcP^{\perf}(\mcA) \to \mcP^{\perf}(\mcB)$ 
whose underlying exact functor 
$\perf(\mcA) \to \perf(\mcB)$ is isomorphic to 
the derived tensor product 
$(-) \otimes^\dL_\mcA S$. 
We denote as 
\begin{equation} \label{eq:Stensor}
s \coloneqq (-) \otimes_\mcA^\dL S \colon D(\mcA) \to D(\mcB). 
\end{equation}
By \cite[Corollary 2.2]{al17}, there exist 
$\mcB$-perfect and $\mcA$-perfect objects 
$L, R \in D(\mcB \mathchar`- \mcA)$ 
such that the functors 
\[
l \coloneqq (-) \otimes^\dL_\mcB L, \quad 
r \coloneqq (-) \otimes^\dL_\mcB R \colon 
D(\mcB) \to D(\mcA) 
\]
are the left, right adjoints of the functor (\ref{eq:Stensor}), 
respectively. 

Let us denote by $SR$ the object 
$R \otimes^\dL_\mcA S \in D(\mcB \mathchar`- \mcB)$. 
We define objects $SL \in D(\mcB \mathchar`- \mcB)$, 
$RS, LS \in D(\mcA \mathchar`- \mcA)$ in a similar way. 
Then the corresponding derived tensor products are isomorphic to 
the functors $sr, sl, rs, ls$, respectively. 
By \cite[Definitions 2.3, 2.4]{al17}, there exist morphisms 
\begin{align*}
&SR \to \mcB , \quad \mcB \to SL, \quad 
\mcA \to RS, \quad LS \to \mcA,
\end{align*}
which induce adjoint (co)units on the underlying exact functors. 
Note that we regard $\mcA, \mcB$ as {\it diagonal} 
$\mcA$-$\mcA$-bimodule, $\mcB$-$\mcB$-bimodule, respectively. 
Namely, we define $\mcA \in {_\mcA \Mod_\mcA}$ as 
\[
\mcA \colon \mcA^{\op} \otimes \mcA \to \Mod_\bC, \quad 
(a, b) \mapsto \Hom_\mcA(b, a), 
\]
and similarly for $\mcB \in {_\mcB \Mod_\mcB}$. 
By taking (shifts of) cones, we obtain the exact triangles 
\begin{align*}
    &SR \to \mcB \to T, \\
    &T' \to \mcB \to SL, \\
    &F \to \mcA \to RS, \\
    &LS \to \mcA \to F'. 
\end{align*}
We call $T$ (resp. $T', F, F'$) as 
{\it twist} (resp. {\it dual twist, cotwist, dual cotwist}) of $S$. 
We denote by $t, t', f, f'$ their underlying exact functors. 

The following is the main result of \cite{al17}:
\begin{def-thm}[{\cite[Theorem 5.1]{al17}}]
Suppose that any two of the following conditions hold: 
\begin{enumerate}
    \item $t$ is an autoequivalence of $D(\mcB)$. 
    \item $f$ is an autoequivalence of $D(\mcA)$. 
    \item The composition $lt[-1] \to lsr \to r$ 
    is an isomorphism of functors. 
    \item The composition $r \to rsl \to fl[1]$ 
    is an isomorphism of functors. 
\end{enumerate}
Then all four hold. 
If this is the case, we call the object 
$S \in D(\mcA \mathchar`- \mcB)$ {\it spherical}.  
\end{def-thm}

The following result will be useful for our purpose: 
\begin{thm}[{\cite[Theorem B]{bar20}}] \label{thm:compsph}
Let $\mcA, \mcB, \mcC \in \dgcat_\bC$ be small dg categories. 
Suppose that we have spherical functors 
$D(\mcA) \to D(\mcC)$ and $D(\mcB) \to D(\mcC)$ whose twists 
are $t_\mcA$ and $t_\mcB$, respectively. 

Then there exists a dg category $\mcR \in \dgcat_\bC$ 
with a semi-orthogonal decomposition 
$D(\mcR)=\langle D(\mcA), D(\mcB) \rangle$, 
and a spherical functor $D(\mcR) \to D(\mcC)$ 
whose twist is $t_\mcA \circ t_\mcB$. 
\end{thm}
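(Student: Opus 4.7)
The plan is to build $\mcR$ as an explicit gluing of $\mcA$ and $\mcB$ and define the spherical functor $D(\mcR)\to D(\mcC)$ as a suitable cone construction. Concretely, let $S_\mcA$ and $S_\mcB$ be dg enhancements of the two given spherical functors, with right adjoints $R_\mcA$ and $R_\mcB$ induced by bimodules as in Theorem \ref{thm:RHom}. I would take the $\mcB$-$\mcA$-bimodule $\phi$ corresponding (under Theorem \ref{thm:RHom}(2)) to the composition $R_\mcA\circ S_\mcB[-1]\colon D(\mcB)\to D(\mcA)$, and define $\mcR$ as the upper-triangular dg category $\begin{pmatrix}\mcA & 0\\ \phi & \mcB\end{pmatrix}$. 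General gluing theory for dg categories then produces a semi-orthogonal decomposition $D(\mcR)=\langle D(\mcA), D(\mcB)\rangle$ with the expected gluing functor, together with the tautological embeddings and projections.

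Next I would exhibit $S_\mcR$: on the two semi-orthogonal pieces it restricts to $S_\mcA$ and $S_\mcB$, and for an object $r\in\mcR$ with components $a\in\mcA$, $b\in\mcB$, and structure map $\alpha\in\phi(a,b)\simeq\Hom_\mcC(S_\mcA(a),S_\mcB(b)[-1])$, set $S_\mcR(r):=\Cone(S_\mcA(a)\to S_\mcB(b)[-1])[1]$ (produced as a dg cone, so as to give a genuine quasi-functor). Using standard adjunction formulas for upper-triangular categories, I would compute the right adjoint $R_\mcR$ as the assignment $c\mapsto (R_\mcA(c), R_\mcB(c), \beta)$ where $\beta$ is the natural map $R_\mcA S_\mcB R_\mcB(c)[-1]\to R_\mcA(c)$ arising from the counit of $S_\mcB\dashv R_\mcB$. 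An analogous calculation gives the left adjoint.

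The final step is to identify the twist and verify sphericality. By the octahedral axiom applied to the composition $S_\mcR R_\mcR(c)\to c$, one decomposes the defining triangle $S_\mcR R_\mcR\to\id\to t_\mcR$ into two stacked triangles whose outer layers are precisely $S_\mcA R_\mcA\to\id\to t_\mcA$ and $S_\mcB R_\mcB\to t_\mcA\to t_\mcA t_\mcB$, yielding an isomorphism $t_\mcR\simeq t_\mcA\circ t_\mcB$. Since $t_\mcA$ and $t_\mcB$ are autoequivalences by hypothesis, so is their composition, hence by the "two out of four" criterion of the spherical package it suffices to verify one further condition — I would check that the composition $L_\mcR t_\mcR[-1]\to L_\mcR S_\mcR R_\mcR\to R_\mcR$ is an isomorphism, using that the analogous compositions hold for $S_\mcA$ and $S_\mcB$.

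The main obstacle is ensuring that the cone construction defining $S_\mcR$ (and the bimodule $\phi$) is made strictly functorial at the dg level rather than only up to homotopy, so that the resulting object really lies in $[\dR\mcH om(\mcP^{\perf}(\mcR),\mcP^{\perf}(\mcC))]$ with the advertised homotopy class. This requires working with suitable replacements (e.g.\ h-projective bimodules) and tracking compatibility of (co)units; once the dg-level bookkeeping is in place, the octahedral argument for the twist and the sphericality verification are essentially formal.
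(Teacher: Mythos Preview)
The paper does not prove this statement: it is quoted as \cite[Theorem B]{bar20} and used as a black box. There is therefore no proof in the paper to compare your proposal against.

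For what it is worth, your outline is essentially the strategy of the cited reference: glue $\mcA$ and $\mcB$ along a bimodule representing a shift of $R_\mcA S_\mcB$, build the spherical functor on the glued category as a dg-level cone, and identify the twist by an octahedral/filtration argument. One small correction: in your second stacked triangle you need $t_\mcA S_\mcB R_\mcB \to t_\mcA \to t_\mcA t_\mcB$, not $S_\mcB R_\mcB \to t_\mcA \to t_\mcA t_\mcB$; without the extra $t_\mcA$ on the first term the octahedron does not close. And the dg-level bookkeeping you flag at the end is precisely where the real work in \cite{bar20} lies --- it is the substance of that paper rather than something ``essentially formal''.
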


\section{Local systems of categories and perverse schobers} 
\label{sec:def-schober}
In this section, we recall the notion of perverse schobers 
on Riemann surfaces, 
which categorifies perverse sheaves. 
We refer \cite{don19a, don19b, ks15} for the details.

\subsection{Local systems of categories}

\begin{defin} \label{def:Loc}
Let $M$ be a manifold and 
let $Z \subset M$ be a finite subset. 
We define a {\it $Z$-coordinatized local system of categories} 
to be an action of the fundamental groupoid $\pi_1(M, Z)$ 
i.e., it consists of the following datum. 
\begin{itemize}
    \item A category $\mcD_z$ for each $z \in Z$, 
    \item An equivalence 
    $\rho_g \colon \mcD_z \to \mcD_{z'}$ 
    for each path $g \in \pi_1(M, Z)$, 
    \item A natural isomorphism 
    $\theta_{g, h} \colon \rho_g \rho_h \to \rho_{gh}$ 
    for each pair $(g, h)$ of composable paths, 
\end{itemize}
such that the diagrams 
\begin{equation} \label{eq:Loccomm}
\xymatrix{
&\rho_g \rho_h \rho_k 
\ar[r]^{\rho_g \theta_{h, k}} 
\ar[d]_{\theta_{g, h} \rho_k} 
&\rho_g \rho_{hk} \ar[d]^{\theta_{g, hk}} \\
&\rho_{gh} \rho_{k} \ar[r]^{\theta_{gh, k}} 
&\rho_{ghk}
}
\end{equation}
commute for all composable paths $g, h, k \in \pi_1(M, Z)$. 
\end{defin}

\subsection{Quiver description of perverse sheaves on a disk}
To motivate the definition of perverse schobers, 
we first recall the quiver description of the category of 
usual perverse sheaves on a disk. 

Let $\Delta$ be the unit disk, 
$B=\{b_1, \cdots, b_n \} \subset \Delta$ a finite subset. 
We denote by $\Perv(\Delta, B)$ the category of 
perverse sheaves on $\Delta$ singular at $B$ 
(i.e., whose restrictions to $\Delta \setminus B$ 
are local systems). 

\begin{defin} \label{def:catPn}
We define $\mcP_n$ to be the category of data 
$(D, D_i, u_i, v_i)_{i=1}^n$ 
consisting of finite dimensional vector spaces $D, D_i$ 
and linear maps $u_i \colon D \to D_i$, $v_i \colon D_i \to D$ 
such that the endomorphisms 
$T_i:=\id - v_i \circ u_i \colon D \to D$ 
are isomorphisms for all $i = 1, \cdots, n$. 
\end{defin}

Fix a point $p \in \partial \Delta$. 
A {\it skeleton} is a union of simple arcs joining $p$ and $b_i$ 
for all $b_i \in B$, coninciding near $p$. 
Let $\mcC$ be the set of isotopy classes of skeletons. 
We will see that 
there are equivalences between the categories 
$\Perv(\Delta, B)$ and $\mcP_n$ 
parametrized by the set $\mcC$, 
compatible with the natural action of 
the Artin braid group 
\[
\Br_n:=\left\langle 
    s_1, \cdots, s_{n-1}: 
    s_is_{i+1}s_i = s_{i+1}s_is_{i+1}
    \right\rangle. 
\]

For a generator $s_i \in \Br_n$, 
we define an autoequivalence 
$f_{s_i} \colon \mcP_n \to \mcP_n$ 
as follows: 
For an object $(D, D_j, u_j, v_j)_{j=1}^n \in \mcP_n$, 
we define 
\[
f_{s_i}(D, D_j, u_j, v_j):= (D', D_j', u_j', v_j'), 
\]
where 
\begin{equation} \label{eq:braid}
\begin{aligned}
    &D'=D, \quad D_j'=D_j, \quad u_j'=u_j, \quad v_j'=v_j 
    \quad (j \neq i, i+1), \\
    &D_{i+1}'=D_i, \quad D_i'=D_{i+1}, \\
    &u_i'=u_{i+1}, \quad v_i'=v_{i+1}, \quad 
    u_{i+1}'=u_iT_{i+1}, \quad v_{i+1}'=T_{i+1}^{-1}v_i. 
\end{aligned}
\end{equation}
For a general element $\sigma \in \Br_n$, 
we then define an equivalence 
$f_\sigma \colon \mcP_n \to \mcP_n$ 
as the composition of $f_{s_i}$'s and their inverses. 

\begin{prop}[\cite{gmv96}] \label{prop:quiver}
For each class $K \in \mcC$, 
there is an equivalence 
\[
F_K \colon \Perv(\Delta, B) \to \mcP_n 
\]
of categories, such that, 
for every $\sigma \in \Br_n$, 
the following diagram commutes: 
\[
\xymatrix{
&\mcP_n \ar[rr]^{f_\sigma} & &\mcP_n \\
& &\Perv(\Delta, B). \ar[ul]^{F_K} \ar[ur]_{F_{\sigma(K)}} &
}
\]
\end{prop}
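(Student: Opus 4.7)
The plan is to construct the functor $F_K$ via the nearby/vanishing cycle formalism along the arcs of a skeleton $K$, to prove it is an equivalence by reducing to the classical one-point case via a gluing argument, and to verify the braid compatibility by a direct monodromy computation.

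First, fix a skeleton $K = \gamma_1 \cup \cdots \cup \gamma_n$ with $\gamma_i$ joining $p$ to $b_i$, labeled in the cyclic order of tangent directions at $p$. Given $\mcF \in \Perv(\Delta, B)$, define $D$ to be the nearby fiber of $\mcF$ near the common endpoint $p$, which is well defined since all $\gamma_i$ coincide near $p$. For each $i$, apply Beilinson's vanishing cycle functor along $\gamma_i$ to produce a space $D_i$ equipped with canonical and variation morphisms $u_i \colon D \to D_i$ and $v_i \colon D_i \to D$; the endomorphism $T_i = \id - v_i u_i$ is then the monodromy of the local system $\mcF|_{\Delta \setminus B}$ around $b_i$ read off in the frame determined by the skeleton. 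Invertibility of $T_i$ is built into the definition of a perverse sheaf whose restriction to $\Delta \setminus B$ is a local system.

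Second, show $F_K$ is an equivalence. Construct a quasi-inverse $G_K$ by gluing: given $(D, D_i, u_i, v_i) \in \mcP_n$, build the local system on $\Delta \setminus B$ with stalk $D$ and monodromies $T_i$ (well defined by invertibility), and at each $b_i$ glue in a perverse extension using $(D_i, u_i, v_i)$ via Beilinson's maximal extension construction. To prove $F_K$ and $G_K$ are mutually inverse, proceed by induction on $n$: cutting $\Delta$ along an arc $\gamma_i$ reduces the problem to a Mayer--Vietoris style gluing of perverse sheaves on two simpler pieces, and the base case $n = 1$ is the classical description of perverse sheaves on a disk from \cite{bei87, ggm85}.

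Finally, verify $F_{\sigma(K)} \simeq f_\sigma \circ F_K$ for $\sigma = s_i$. The half-twist exchanging $\gamma_i$ and $\gamma_{i+1}$ leaves the nearby fiber $D$ and all data for $j \neq i, i+1$ untouched. The new $i$-th arc is (isotopic to) the old $\gamma_{i+1}$, giving $D_i' = D_{i+1}$, $u_i' = u_{i+1}$, $v_i' = v_{i+1}$. The new $(i+1)$-th arc reaches $b_i^{\mathrm{old}}$ only after looping once around $b_{i+1}^{\mathrm{old}}$; parallel transport of $D$ along this detour is precisely $T_{i+1}$, so the canonical and variation maps conjugate to $u_{i+1}' = u_i T_{i+1}$ and $v_{i+1}' = T_{i+1}^{-1} v_i$, matching (\ref{eq:braid}) exactly. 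The hard part will be the second step: making the inductive gluing genuinely functorial and fully faithful, rather than merely an object-level reconstruction, requires a careful analysis of Beilinson's gluing functor and of how morphism spaces between perverse sheaves match those in $\mcP_n$ across overlapping neighborhoods of the marked points.
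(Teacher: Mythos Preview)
The paper does not give its own proof of this proposition: it is stated as a known result and attributed to \cite{gmv96}, with no argument supplied. Your sketch is therefore not something to compare against a proof in the paper, but rather an outline of the classical argument underlying the cited reference. As such it is broadly on target: the nearby/vanishing cycle construction along the arcs of a skeleton, the inverse via Beilinson gluing, and the half-twist computation for the braid action are exactly the ingredients of the Gelfand--MacPherson--Vilonen description. If you wish to turn this into a genuine proof you would need to pin down the functoriality of the gluing step carefully, as you yourself flag; but for the purposes of this paper the statement is simply quoted, and no proof is expected of you here.
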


\subsection{Perverse schobers on Riemann surfaces}
First we define the notion of perverse schobers on a disk. 
We keep the notations in the previous subsection. 

\begin{defin}[\cite{ks15, don19b}] \label{def:schoberdisk}
\begin{enumerate}
    \item A {\it coodinatized schober} on $(\Delta, B)$ 
    is a datum 
    $\fS=(\mcD, \mcD_i, S_i, \epsilon_i)_{i=1}^n$ 
    consisting of 
    triangulated dg categories $\mcD, \mcD_i$, 
    spherical functors $S_i \colon \mcD_i \to \mcD$, 
    and numbers $\epsilon_i \in \{\pm 1\}$. 
    For each $i$, we denote by $R_i, L_i$ 
    the right and left adjoints of $S_i$, 
    and by $T_i:=\Cone(S_iR_i \to \id_\mcD)$ 
    (resp. $T'_i \coloneqq \Cone({id_\mcD} \to S_iL_i)[-1]$)
    the corresponding twist (resp. dual twist) equivalence. 
    \item For a given coordinatized schober $\fS$ 
    and an element $\sigma \in \Br_n$, 
    we define a new coordinatized schober $f_\sigma(\fS)$ 
    as in (\ref{eq:braid}), replacing 
    $(D, D_i, u_i, v_i)$ with $(\mcD, \mcD_i, R_i, S_i)$ 
    (resp. $(\mcD, \mcD_i, L_i, S_i)$), 
    and endomorphism $T_i=\id-v_iu_i$ 
    with the functor $T_i$ 
    (resp. $T'_i$) 
    when $\epsilon_i=1$ (resp. $\epsilon_i=-1$). 
    \item A {\it perverse schober} on $(\Delta, B)$ is 
    a collection $(\fS_K)_{K \in \mcC}$ of 
    coordinatized schobers $\mathfrak{S}_K$ 
    such that for each $\sigma \in \Br_n$, 
    we have a compatible identification 
    \[
    f_{\sigma} \fS_K \xrightarrow{\sim} \fS_{\sigma(K)}. 
    \]
\end{enumerate}
\end{defin}

\begin{rmk} \label{rmk:dual=inverse}
By \cite[Proposition 5.3]{al17}, 
we have an isomorphism $T'_i \cong T_i^{-1}$ 
for each $i$. 
\end{rmk}

A choice of signs $\epsilon_i$ in the above definition 
is important when we consider decategorifications: 
\begin{ex} \label{ex:twodegat}
Let $X$ be a smooth projective Calabi--Yau variety of dimension $n$. Let $E \in D^b(X)$ be a spherical object, 
e.g., $E=\mcO_X$. 
Then we obtain a spherical functor 
\[
S \colon D^b(\pt) \to D^b(X), \quad V \mapsto V \otimes E. 
\]
Denote by $L, R$ its left and right adjoints. 

Taking the cohomology groups, we get the following diagrams: 
\begin{equation} \label{eq:twodecat}
\xymatrix{
&H^*(\pt) \ar@<1ex>[r]^-{S^H} &H^*(X) \ar@<1ex>[l]^-{R^H}, 
&H^*(\pt) \ar@<1ex>[r]^-{S^H} &H^*(X) \ar@<1ex>[l]^-{L^H}, 
}
\end{equation}
where $S^H, L^H, R^H$ denote the cohomological Fourier--Mukai transforms. 
Since the canonical bundles of $\pt$ and $X$ are trivial, 
we have $L^H=(-1)^{n}R^H$. 
Hence if $n$ is odd, the diagrams (\ref{eq:twodecat}) define two different perverse sheaves on a disk. 
\end{ex}

\begin{defin}
Let $\fS=(\mcD, \mcD_i, S_i, \epsilon_i)_{i=1}^n$ 
be a coordinatized perverse schober on $(\Delta, B)$. 
We define an {\it induced local system} of categories 
on $\partial\Delta$ to be the one whose monodromy 
autoequivalence is 
\[
T_1^{\epsilon_1} \circ \cdots T_n^{\epsilon_n}. 
\]
\end{defin}

We now define the notion of perverse schobers on Riemann surfaces. 
Let $\Sigma$ be a Riemann surface and 
$B \subset \Sigma$ a finite set of points. 
Take a disk $\Delta \subset \Sigma$ 
containing the set $B$. 
Denote by $U$ the closure of $\Sigma \setminus \Delta$. 

We fix a point $p \in \partial \Delta$ 
and a finite set $Y \subset \Sigma \setminus B$. 
We set $Z \coloneqq Y \cup \{p\}$. 

\begin{defin}[\cite{ks15, don19b}] \label{def:schober}
A {\it perverse schober} on $(\Sigma, B)$ is a datum 
consisting of 
\begin{itemize}
    \item a perverse schober $\fS_{\Delta}$ on $(\Delta, B)$, 
    \item a $Z$-coordinatized local system $\fL$ 
    of categories on $U$, 
\end{itemize}
such that the induced $\{p\}$-coordinatized local systems 
$\fS_\Delta |_{\partial \Delta}$ and 
$\fL|_{\partial \Delta}$ are isomorphic. 
\end{defin}

\begin{ex} \label{ex:trivialmonod}
Consider the case $\Sigma=\bP^1$. 
In this case, giving a perverse schober on $(\bP^1, B)$ is 
equivalent to giving a perverse schober on $(\Delta, B)$ 
such that the induced local system on $\partial\Delta$ has 
the trivial monodromy. 
\end{ex}

The following result is very useful for our purpose: 
\begin{prop}[{\cite[Proposition 4.10]{don19b}}]
\label{prop:extension}
Fix a point $p \in \Sigma \setminus B$. 
Let $\fS_{\Sigma \setminus \{p\}}$ be a schober 
on $\left(\Sigma \setminus \{p\}, B \right)$, 
$T \colon \mcD \to \mcD$ 
be the corresponding monodromy autoequivalence 
around the point $p$. 

Given a presentation of the equivalence $T$ 
as the twist or the dual twist of a spherical functor, 
we obtain a schober on $\left(\Sigma, B \cup \{p\} \right)$.
\end{prop}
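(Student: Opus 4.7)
The plan is to localize the construction to a neighborhood of the new singular point $p$, package the given spherical presentation of $T$ into a coordinatized schober on that neighborhood, and glue with the existing schober on $\Sigma \setminus \{p\}$.

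First, choose a small closed disk $D_p \subset \Sigma$ containing $p$ in its interior and disjoint from $B$. On the punctured disk $D_p \setminus \{p\}$, the given schober $\fS_{\Sigma \setminus \{p\}}$ restricts to a local system of categories with fiber $\mcD$ whose monodromy around $p$ equals $T$. Now use the given data: if $T$ is presented as the twist of a spherical functor $S \colon \mcE \to \mcD$, set $\epsilon = +1$; if as a dual twist, set $\epsilon = -1$. Then $\fS_{D_p} \coloneqq (\mcD, \mcE, S, \epsilon)$ is a coordinatized schober on $(D_p, \{p\})$ in the sense of Definition \ref{def:schoberdisk}. Since there is only one singular point, the braid group $\Br_1$ is trivial and all skeletons on $(D_p, \{p\})$ give equivalent data, so this datum canonically promotes to a perverse schober on $(D_p, \{p\})$.

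Next, choose a larger closed disk $\Delta \subset \Sigma$ containing $B \cup \{p\}$ with $D_p \subset \Delta$, together with a skeleton of $(\Delta, B \cup \{p\})$ whose last arc joins the boundary basepoint to $p$ through the boundary of $D_p$. The restriction of $\fS_{\Sigma \setminus \{p\}}$ to $\Delta \setminus D_p^\circ$ supplies a coordinatized schober $(\mcD, \mcD_i, S_i, \epsilon_i)_{i=1}^n$ indexed by $B$, while $\fS_{D_p}$ supplies the extra component $(\mcE, S, \epsilon)$ at $p$. Concatenating produces a coordinatized schober on $(\Delta, B \cup \{p\})$, whose $\Br_{n+1}$-orbit defines a perverse schober on $(\Delta, B \cup \{p\})$. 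The local system on $U \coloneqq \overline{\Sigma \setminus \Delta}$ is inherited from $\fS_{\Sigma \setminus \{p\}}$, since $U \subset \Sigma \setminus \{p\}$.

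The main obstacle is verifying the boundary compatibility required by Definition \ref{def:schober}: the local system on $\partial \Delta$ induced by the new coordinatized schober must agree with the restriction to $\partial \Delta$ of the inherited local system on $U$. The new monodromy is the ordered composition $T_1^{\epsilon_1} \circ \cdots \circ T_n^{\epsilon_n} \circ T^{\epsilon}$. On the other hand, any loop along $\partial \Delta$, viewed in $\Sigma \setminus \{p\}$, is freely homotopic in $\Sigma \setminus (B \cup \{p\})$ to the concatenation of small loops around each $b_i$ and one small loop around $p$; applying the monodromy of $\fS_{\Sigma \setminus \{p\}}$ to this decomposition yields the same product, where Remark \ref{rmk:dual=inverse} ensures that the sign $\epsilon$ contributes either $T$ or $T^{-1}$ in accordance with the twist/dual-twist presentation. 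Once this matching is checked, Definition \ref{def:schober} is satisfied and we obtain the desired schober on $(\Sigma, B \cup \{p\})$.
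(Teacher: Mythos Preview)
The paper does not supply its own proof of this proposition; it is quoted from \cite[Proposition 4.10]{don19b} and used as a black box. Your outline is the natural argument and is essentially what one would expect: localize near $p$, package the spherical presentation as a one-point coordinatized schober, enlarge the disk to swallow $B\cup\{p\}$, and verify boundary compatibility by factoring the loop along $\partial\Delta$ into small loops around the singular points.

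One point to tighten: in your boundary-monodromy check you write the induced monodromy of the new coordinatized schober as $T_1^{\epsilon_1}\circ\cdots\circ T_n^{\epsilon_n}\circ T^{\epsilon}$, but here $T$ already denotes the \emph{given} monodromy around $p$, not the twist of the new spherical functor $S$. What the definition of the induced local system actually produces as the last factor is $(\text{twist of }S)^{\epsilon}$; the whole purpose of choosing $\epsilon=+1$ when $T$ is the twist and $\epsilon=-1$ when $T$ is the dual twist (together with Remark~\ref{rmk:dual=inverse}) is precisely to force this last factor to equal $T$ in either case. So the matching with the local-system monodromy $T_1^{\epsilon_1}\circ\cdots\circ T_n^{\epsilon_n}\circ T$ does go through, but as written your formula is off by an exponent when $\epsilon=-1$. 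Separately, declaring that the $\Br_{n+1}$-orbit of your coordinatized datum ``defines a perverse schober'' on $(\Delta,B\cup\{p\})$ quietly uses that the $f_{s_i}$ satisfy the braid relations up to coherent isomorphism; this is true for spherical functors but is not a tautology, and a complete argument would cite it.
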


\begin{rmk}
Note that there are many ways presenting an equivalence $T$ 
as a spherical (dual) twist, 
and a schober on $\Sigma$ in the above proposition 
depends on such a choice. 
We will find the most natural presentations in our setting. 
\end{rmk}

Finally we recall the notion of {Calabi-Yau} perverse schobers: 
\begin{defin}[{\cite[Definitions 3.1.8 and 3.1.14]{kss20}}] 
\label{def:CYschober}
Let $n \in \bZ_{>0}$ be a positive integer. 
\begin{enumerate}
    \item A perverse schober on $(\Delta, 0)$, 
    represented by a spherical functor $\mcD_1 \to \mcD$, 
    is {\it $n$-Calabi-Yau (CY)} 
    if the following conditions hold: 
    \begin{enumerate}
        \item The category $\mcD$ is CY of dimension $n$, 
        \item The shifted cotwist $f[n+1]$ is 
        the Serre functor of $\mcD_1$. 
    \end{enumerate}
    
    \item A perverse schober $\fS$ on $(\Sigma, B)$ is 
    {\it $n$-Calabi-Yau} if the following conditions hold: 
    \begin{enumerate}
        \item The restriction $\fS|_{\Sigma \setminus B}$ is 
        a local system of $n$-CY categories, 
        and the monodromy autoequivalences preserve 
        the CY structure, 
        \item For each point $b \in B$, 
        the restriction of $\fS$ to a disk around $b$ is 
        $n$-CY. 
    \end{enumerate}
\end{enumerate}
\end{defin}

\begin{rmk} \label{rmk:Vdual}
The notion of $n$-CY perverse schobers categorifies 
perverse sheaves with (anti-)symmetric structures 
with respect to the Verdier duality, 
see \cite[Proposition 1.3.22]{kss20}. 
In particular, these perverse sheaves are Verdier self-dual. 
\end{rmk}

\subsection{Spherical pairs}
Consider the special case where 
$\Sigma=\Delta$ and $B=\{0\}$. 
In this case, we have another categorification of 
perverse sheaves on $(\Delta, 0)$, 
called {\it spherical pair}: 

\begin{defin}
\label{def:Spair}
A {\it spherical pair} is 
a pair of semi-orthogonal decompositions 
\[
\mcE_0
=\langle
    \mcE_-^{\perp}, \mcE_-
    \rangle
=\langle
    \mcE_+^{\perp}, \mcE_+
    \rangle
\]
of a triangulated category $\mcE_0$ 
such that the compositions of 
the inclusions and the projections 
\[
\mcE_{\pm}^\perp \to \mcE_{\mp}^\perp, \quad 
\mcE_\pm \to \mcE_\mp
\]
are equivalences. 
\end{defin}

\begin{rmk} 
As proved in \cite[Proposition 3.7]{ks15}, 
given a spherical pair, 
we obtain a spherical functor 
$\mcE_- \to \mcE_+^\perp$. 
Hence a spherical pair induces 
a perverse schober on $(\Delta, 0)$. 
\end{rmk}

As we will see in Section \ref{sec:vgit}, 
the theory of variations of GIT quotients, 
developed by \cite{bfk19, hl15}, 
provides natural classes of spherical pairs.

\section{Derived factorization category} \label{sec:factorization}
\subsection{Derived factorization category}
In this subsection, we recall the notion of  derived factorization categories following \cite{bfk14}.

Let $G$ be an affine algebraic group acting on a variety $X$, and $\chi:G \to \bC^*$ be a character of $G$. Let $W:X\rightarrow \bC$ be a $\chi$-semi-invariant regular function, i.e.  $W(g\cdot x)=\chi(g)W(x)$ for any $g\in G$ and any $x\in X$. The data $(X,\chi,W)^G$ is called a {\it gauged Landau-Ginzburg  model}. Denote the character invertible sheaf of $\chi$ on $X$ by $\mcO_X(\chi)$. First, we define the dg category $\fact_G(X,\chi,W)$ of factorizations of $(X,\chi,W)^G$. 

\begin{defin}
A {\it factorization}  of $(X,\chi,W)^G$ is a sequence
\begin{equation} \label{eq:deffact}
E=\Bigl(E_1\xrightarrow{\varphi_1^E} E_0\xrightarrow{\varphi_0^E} E_1(\chi)\Bigr)
\end{equation}
of morphisms of $G$-equivariant coherent sheaves on $X$ such that
\[\varphi_0^E\circ\varphi_1^E=W\cdot {\rm id}_{E_1}, \ \varphi_1^E(\chi)\circ\varphi_0^E=W\cdot {\rm id}_{E_0}.\] 
A factorization $E=\Bigl(E_1\xrightarrow{\varphi_1^E} E_0\xrightarrow{\varphi_0^E} E_1(\chi)\Bigr)$ of $(X,\chi,W)^G$ is called a  {\it locally free factorization} if $E_1$ and $E_0$ are $G$-equivariant locally free sheaves on $X$.
\end{defin}

\begin{defin}\label{def:Fact}
Let \[E=\Bigl(E_1\xrightarrow{\varphi_1^E} E_0\xrightarrow{\varphi_0^E} E_1(\chi)\Bigr), 
F=\Bigl(F_1\xrightarrow{\varphi_1^F} F_0\xrightarrow{\varphi_0^F} F_1(\chi)\Bigr)\]
be factorizations of $(X,\chi,W)^G$. We define the $\bZ$-graded $\bC$-vector space $\Hom_{\fact_G(X,\chi,W)}(E,F)$ of morphisms from $E$ to $F$ as follows. 
For an integer $l$, we define
\[\Hom^{2l}_{\fact_G(X,\chi,W)}(E,F):=\Hom(E_1,F_1(\chi^l)) \oplus \Hom(E_0,F_0(\chi^l)),\]
\[\Hom^{2l+1}_{\fact_G(X,\chi,W)}(E,F):=\Hom(E_1, F_0(\chi^l)) \oplus \Hom(E_0,F_1(\chi^{l+1})).\]
Then we define 
\[\Hom_{\fact_G(X,\chi,W)}(E,F):=\bigoplus_{n \in \bZ}\Hom^n_{\fact_G(X,\chi,W)}(E,F).\]

Moreover, we define the linear map
\[d_{E,F}:\Hom_{\fact_G(X,\chi,W)}(E,F) \to \Hom_{\fact_G(X,\chi,W)}(E,F)\]
as follows.
Take $f=(f_1,f_0) \in \Hom^{n}_{\fact_G(X,\chi,W)}(E,F)$.
When $n=2l$ for some integer $l$, we put 
\[d_{E,F}(f):=(\varphi^F_1(\chi^l) \circ f_1-f_0 \circ \varphi^E_1, \varphi^F_0(\chi^l) \circ f_0-f_1(\chi) \circ \varphi^E_0).\]
When $n=2l+1$ for some integer $l$, we put
\[d_{E,F}(f):=(\varphi^F_0(\chi^l) \circ f_1+f_0 \circ \varphi^E_1, \varphi_1^F(\chi^{l+1}) \circ f_0+f_1(\chi) \circ \varphi_0^E).\]
Then $d_{E,F}^2=0$ holds and the degree of $d_{E,F}$ is one. We have a dg $\bC$-module $(\Hom_{\fact_G(X,\chi,W)}(E,F), d_{E,F})$.
\end{defin}

Thus, we have the dg category of factorizations of $(X,\chi,W)^G$.

\begin{defin}\label{def:Fact2}
Denote the dg category of factorizations of $(X,\chi,W)^G$ by $\fact_G(X,\chi,W)$. 
Let $\vect_G(X,\chi,W)$ be the dg subcategory of locally free factorizations of $(X,\chi,W)^G$. 
\end{defin}

We obtain triangulated categories from the dg categories in Definition \ref{def:Fact2}.

\begin{prop}[{\cite[Proposition 3.7]{bfk14}}]\label{prop:Fact_tri}
The dg categories $\fact_G(X,\chi,W)$ and $vect_G(X,\chi,W)$ are triangulated dg categories.
\end{prop}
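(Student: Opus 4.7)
The plan is to produce intrinsic shift and cone constructions on the dg categories $\fact_G(X,\chi,W)$ and $\vect_G(X,\chi,W)$, upgrade their homotopy categories to triangulated categories via the usual diagram-chasing axioms, and then verify idempotent completeness so that the inclusions $[\fact_G] \subset \perf(\fact_G)$ and $[\vect_G] \subset \perf(\vect_G)$ become equivalences.

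First I would define the shift of a factorization $E = (E_1 \xrightarrow{\varphi_1^E} E_0 \xrightarrow{\varphi_0^E} E_1(\chi))$ by $E[1] = (E_0 \xrightarrow{-\varphi_0^E} E_1(\chi) \xrightarrow{-\varphi_1^E(\chi)} E_0(\chi))$, and for a closed degree-zero morphism $f = (f_1, f_0) \colon E \to F$, the cone $\Cone(f)$ with underlying $G$-equivariant sheaves $E_0 \oplus F_1$ and $E_1(\chi) \oplus F_0$, whose two structure maps are the standard $2 \times 2$ matrices built from $\varphi_\bullet^E, \varphi_\bullet^F, f_1, f_0$. A direct computation against the relations in Definition \ref{def:Fact} confirms that these outputs are again factorizations of $(X,\chi,W)^G$, and that $\vect_G$ is closed under them (direct summands of equivariant locally free sheaves remain locally free). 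The octahedron and rotation axioms then reduce to the usual identities verified in any mapping-cone formalism, exactly as for chain complexes.

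To upgrade from pre-triangulated to triangulated, I would verify that $[\fact_G(X,\chi,W)]$ and $[\vect_G(X,\chi,W)]$ are idempotent complete. A homotopy idempotent can be represented, after altering it by a coboundary, by a strict idempotent on the underlying graded equivariant sheaves; splitting this idempotent in $\Coh^G(X)$ (respectively in the exact category of equivariant locally free sheaves) and transferring the factorization structure to the summand yields the required splitting in the homotopy category. One must check that the induced pair of morphisms between the summands still satisfies the factorization relations, which follows because the original idempotent commutes with $\varphi_0^E$ and $\varphi_1^E$ up to a coboundary that can be absorbed.

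The main obstacle is the idempotent completeness step: the pre-triangulation constructions are parallel to the classical mapping-cone formalism and are essentially routine, whereas lifting a homotopy idempotent to a strict one requires delicate handling of the interplay between the sheaf-level splitting and the two structure morphisms. In the locally free setting one also has to confirm that the summand has a locally free representative, which follows from the fact that projective summands in $\Coh^G(X)$ are themselves $G$-equivariant locally free sheaves; this verification, rather than the construction of shifts and cones, is where the argument needs the most care.
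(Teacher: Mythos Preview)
The paper does not give its own proof; it cites \cite[Proposition~3.7]{bfk14}, which establishes that the homotopy categories $[\fact_G(X,\chi,W)]$ and $[\vect_G(X,\chi,W)]$ carry triangulated structures, i.e.\ that these dg categories are \emph{pre}-triangulated in the terminology of Section~\ref{sec:dgcat}. Your shift and cone constructions reproduce exactly that part and are correct (the parenthetical about direct summands is misplaced, though: closure of $\vect_G$ under cones only needs that direct \emph{sums} and twists of equivariant locally free sheaves are locally free).

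The genuine gap is your idempotent-completeness step. The assertion that a homotopy idempotent ``can be represented, after altering it by a coboundary, by a strict idempotent on the underlying graded equivariant sheaves'' is false in general: lifting idempotents along $Z^0 \to H^0$ is an obstruction problem with no reason to vanish here, and the standard mechanism for splitting homotopy idempotents in a triangulated category (the telescope, a countable homotopy colimit) leaves the coherent or finite-rank setting. The paper itself signals the issue: it later invokes separate results (\cite[Theorem~40]{orl09} and \cite[Lemma~4.8]{BDFIK18}) to conclude that $\HMF(W) \simeq [\vect_{\bC^*}(\bC^{n},\chi_d,W)]$ is idempotent complete, which would be redundant if Proposition~\ref{prop:Fact_tri} already delivered this in full generality. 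So either ``triangulated'' in the proposition is being used loosely for ``pre-triangulated,'' or idempotent completeness requires a substantially different argument from your strictification sketch; in either case that step does not go through as written.
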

 
Next, we define the derived factorization categories $\Dabs\fact_G(X,\chi,W)$  and $\Dabs[\fact_G(X,\chi,W)]$ of $(X,\chi,W)^G$.

\begin{rmk}
The category $Z^0(\fact_G(X,\chi,W))$ has the structure of an abelian category,
where the category $Z^0(\fact_G(X,\chi,W))$ is defined as follow. The objects of $Z^0(\fact_G(X,\chi,W))$ are same as $\fact_G(X,\chi,W)$. For objects $E,F \in Z^0(\fact_G(X,\chi,W))$, we define $\Hom_{Z^0(\mcA)}(E,F):=\mathrm{Ker}d^0_{E,F}$.

\end{rmk}

To introduce the notion of acyclic objects, we need the totalization of an object in $Z^0(\fact_G(X,\chi,W))$.

\begin{defin}
Let $E^{\text{\tiny{\textbullet}}}=(\cdot\cdot\cdot\rightarrow E^i\xrightarrow{\delta^i}E^{i+1}\rightarrow\cdot\cdot\cdot)$ be a complex of objects in the abelian category $Z^0(\fact_G(X,\chi,W))$.
For $i \in \bZ$, we put
\[E^i=\Bigl(E^i_1\xrightarrow{\varphi_1^{E^i}} E^i_0\xrightarrow{\varphi_0^{E^i}} E^i_1(\chi)\Bigr).\]
We define $G$-equivariant quasi-coherent sheaves $T_1, T_0$ on $X$ as
\[T_1:=\bigoplus_{l \in \bZ}\Bigl(E^{-2l-1}_0(\chi^l) \oplus E^{-2l-2}_1(\chi^{l+1})\Bigr), \]
\[T_0:=\bigoplus_{l \in \bZ}\Bigl(E^{-2l}_0(\chi^l) \oplus E^{-2l-1}_1(\chi^{l+1}) \Bigr).\]
We define morphisms $t_1, t_0$ of $G$-equivariant quasi-coherent sheaves as 
\[t_1:=\sum_{l \in \bZ}\Bigl(\Bigl(\delta^{-2l-1}_0( \chi^l) - \varphi^{E^{-2l-1}}_0( \chi^l)\Bigr)\oplus \Bigl(\delta^{-2l-2}_1(\chi^{l+1})+\varphi^{E^{-2l-2}}_1(\chi^{l+1})\Bigr)\Bigr),\]
\[t_0:=\sum_{l \in \bZ}\Bigl(\Bigl(\delta^{-2l}_0( \chi^l)+ \varphi^{E^{-2l}}_0( \chi^l)\Bigr)\oplus \Bigl(\delta^{-2l-1}_1(\chi^{l+1})- \varphi^{E^{-2l-1}}_1(\chi^{l+1})\Bigr)\Bigr).\]
The {\it totalization} $\mathrm{Tot}(E^{\text{\tiny{\textbullet}}}) \in \fact_G(X,\chi,W)$ of $E^{\text{\tiny{\textbullet}}}$ is an object defined as
\[ \mathrm{Tot}(E^{\text{\tiny{\textbullet}}})=\Bigl(T_1\xrightarrow{t_1} T_0\xrightarrow{t_0} T_1(\chi)\Bigr) .\]
\end{defin}

\begin{defin}
A factorization $E \in \fact_G(X,\chi,W)$ is called {\it acyclic} if there is an acyclic complex $E^{\text{\tiny{\textbullet}}}$ of objects in $Z^0(\Fact_G(X,\chi,W))$ such that $E$ is isomorphic to $\mathrm{Tot}(E^{\text{\tiny{\textbullet}}})$ in $Z^0(\fact_G(X,\chi,W))$.
Let $\acyc_G(X,\chi,W)$ be the full dg subcategory of acyclic factorizations in $\fact_G(X,\chi,W)$. 
\end{defin}


\begin{defin}
We define the {\it absolute derived category}  $D^{\abs}[\fact_G(X,\chi,W)]$
of  $[\fact_G(X,\chi,W)]$ to be the Verdier quotient
\[D^{\abs}[\fact_G(X,\chi,W)]:=[\fact_G(X,\chi,W)]/[\acyc_G(X,\chi,W)].\]
\end{defin}

\begin{defin}
We define the {\it absolute derived category}  $D^{\abs}\fact_G(X,\chi,W)$ of $\fact_G(X,\chi,W)$
to be the dg quotient
\[D^{\abs}\fact_G(X,\chi,W):=\fact_G(X,\chi,W)/\acyc_G(X,\chi,W).\]
\end{defin}

The categories $\Dabs\fact_G(X,\chi,W)$ and $\Dabs[\fact_G(X,\chi,W)]$ are also called the {\it derived factorization categories} of $(X,\chi,W)^G$.

\begin{prop}[{\cite[Corollary 5.9, Proposition 5.11]{bfk14}}]\label{prop:quasi-eq}
The dg category $D^{\abs}\fact_G(X,\chi,W)$ is a dg enhancement of
$D^{\abs}[\fact_G(X,\chi,W)]$. If $X$ is smooth affine and $G$ is reductive, the canonical dg functor $\vect_G(X,\chi,W) \to D^{\abs}\fact_G(X,\chi,W)$ is a quasi-equivalence.
\end{prop}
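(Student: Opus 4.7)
The statement splits into two assertions, which I would handle in sequence.

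For the first assertion, the plan is to reduce everything to Drinfeld's theorem on dg quotients. By Proposition \ref{prop:Fact_tri}, the dg category $\fact_G(X,\chi,W)$ is pre-triangulated, and a straightforward check shows that $\acyc_G(X,\chi,W)$ is a full pre-triangulated dg subcategory closed under the shift coming from twisting by $\chi$. Since Drinfeld's construction of the dg quotient preserves the pre-triangulated property, $D^{\abs}\fact_G(X,\chi,W)$ is itself pre-triangulated. The core input is then Drinfeld's comparison theorem: for a pre-triangulated dg category $\mcA$ and a full pre-triangulated subcategory $\mcN$, one has a canonical equivalence $[\mcA/\mcN] \simeq [\mcA]/[\mcN]$ of triangulated categories. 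Applying this to $\mcA = \fact_G(X,\chi,W)$ and $\mcN = \acyc_G(X,\chi,W)$ gives exactly the desired equivalence $[D^{\abs}\fact_G(X,\chi,W)] \simeq D^{\abs}[\fact_G(X,\chi,W)]$, which supplies the dg enhancement.

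For the second assertion, I would show that the inclusion $\vect_G(X,\chi,W) \hookrightarrow \fact_G(X,\chi,W)$ composed with the projection to the quotient is a quasi-equivalence. The inclusion is already dg-fully faithful on the nose, and this property descends to the dg quotient because locally free factorizations have no nontrivial maps to acyclics at the level of cohomology (any null-homotopy produced in the quotient can be lifted, using that morphisms from a locally free factorization compute $\Ext$). The substantive point is essential surjectivity on homotopy categories: I must show that every factorization is isomorphic in $D^{\abs}[\fact_G(X,\chi,W)]$ to a locally free one.

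The essential surjectivity will exploit the hypotheses that $X$ is smooth affine and $G$ is reductive. Under these assumptions every $G$-equivariant coherent sheaf on $X$ admits a finite resolution by $G$-equivariant locally free sheaves: reductivity ensures the category of $G$-equivariant coherent sheaves has enough locally free objects, and smoothness of $X$ makes the global dimension finite, so the resolution can be chosen bounded. Given a factorization $E = (E_1 \to E_0 \to E_1(\chi))$, the plan is to build compatible $G$-equivariant locally free resolutions of $E_1$ and $E_0$ and assemble them into a bounded complex $P^{\text{\tiny{\textbullet}}}$ of locally free factorizations augmented to $E$, then pass to the totalization $\Tot(P^{\text{\tiny{\textbullet}}})$. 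The cone of the induced morphism $\Tot(P^{\text{\tiny{\textbullet}}}) \to E$ is by construction the totalization of an acyclic complex of factorizations, hence lies in $\acyc_G(X,\chi,W)$, so the map is an isomorphism in the absolute derived category.

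The main obstacle is the construction of the lift of the resolution to the level of factorizations, compatible with the differentials $\varphi_i^E$; this is where the boundedness of the resolution and the finite global dimension are crucial, since otherwise the totalization would not converge and the sequence of obstructions in the factorization structure might not terminate. Once this is handled, both parts follow; the rest is bookkeeping between the Drinfeld quotient and the Verdier quotient.
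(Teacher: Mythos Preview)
The paper does not supply its own proof of this proposition: it is stated with a citation to \cite[Corollary 5.9, Proposition 5.11]{bfk14} and no further argument, so there is no in-paper proof to compare against. Your sketch follows the standard route one finds in the cited source---Drinfeld's comparison of the dg quotient with the Verdier quotient for the first assertion, and resolution by equivariant locally free sheaves (using smoothness and reductivity) for the second---and is broadly correct as an outline.

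One point worth tightening: your justification of full faithfulness in the second part is slightly glib. The inclusion $\vect_G \hookrightarrow \fact_G$ is of course fully faithful as a dg functor, but after passing to the dg quotient the morphism complexes change, and what you must check is that for locally free $E$ the complex $\Hom_{\fact_G}(E,A)$ is acyclic whenever $A$ is acyclic. This is the content of ``locally free objects are homotopically projective'' in this setting, and it is where the affine hypothesis on $X$ and the reductivity of $G$ are already used (to ensure $\Hom$ out of locally free equivariant sheaves is exact), not only in the essential surjectivity step. With that clarification your argument goes through.
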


\subsection{Derived functors}
In this subsection, we recall the construction of derived functors which we will use later.
Let $X$ and $Y$ be smooth affine varieties, and $G$ a reductive affine algebraic group acting on $X$ and $Y$. Let $f:X \to Y$ be a $G$-equivariant morphism. Take a character $\chi:G \to \bC^*$ of $G$. 
Let $W:Y \to \bC$ be a $\chi$-semi-invariant regular function on $Y$. 
Then $(Y,\chi,W)^G$ and $(X,\chi,f^*W)^G$ are gauged Landau-Ginzburg models.

\begin{defin}
The dg functor 
$f^*:\vect_G(Y,\chi,W) \to \vect_G(X,\chi,f^*W)$ 
is defined as follows. 
For an object $E \in \vect_G(Y,\chi,W)$, 
we define an object
\[f^*(E):=\Bigl(f^*E_1\xrightarrow{f^*\varphi_1^E} f^*E_0\xrightarrow{f^*\varphi_0^E} f^*E_1(\chi)\Bigr)\in \vect_G(X,\chi, f^*W).\]
For a morphism $p=(p_1,p_0) \in \Hom_{\vect_G(Y,\chi,W)}(E,F)$, we define a morphism 
\[f^*p:=(f^*p_1, f^*p_0) \in \Hom_{\vect_G(X,\chi,f^*W)}(f^*E,f^*F).\]
By Proposition \ref{prop:quasi-eq}, we have the exact functor
\[D^{\abs}[\fact_G(Y,\chi,W)]\to D^{\abs}[\fact_G(X,\chi,f^*W)]\]
and denote it by $f^*$. 
\end{defin}

\begin{defin}
Assume that $f$ is a closed immersion.
We define the dg functor $f_*:\fact_G(X,\chi,f^*W) \to \fact_G(Y,\chi,W)$ as follows. For an object $E \in \fact_G(X,\chi,f^*W)$, 
we define an object
\[f_*(E):=\Bigl(f_*E_1\xrightarrow{f_*\varphi_1^E} f_*E_0\xrightarrow{f_*\varphi_0^E} f_*E_1(\chi)\Bigr)\in \fact_G(Y,\chi, W).\]
For a morphism $p=(p_1,p_0) \in \Hom_{\fact_G(X,\chi,f^*W)}(E,F)$, we define a morphism 
\[f_*p:=(f_*p_1, f_*p_0) \in \Hom_{\fact_G(Y,\chi,W)}(f_*E,f_*F).\]
Since $f$ is a closed immersion, $f:\fact_G(X,\chi,f^*W) \to \fact_G(Y,\chi,W)$ sends acyclic factorizations to acyclic factorizations. 
Therefore, we obtain the dg functor 
$f_*:\Dabs\fact_G(X,\chi,f^*W) \to \Dabs\fact_G(Y,\chi,W)$ and the exact functor $f_*:\Dabs[\fact_G(X,\chi,f^*W)]\to \Dabs[\fact_G(Y,\chi,W)]$.
\end{defin}

\begin{defin}
Let $(X,\chi,W)^G$ be a gauged Landau-Ginzburg model.
Let $L$ be a $G$-equivariant line bundle on $X$. We define the dg functor $- \otimes L: \fact_G(X,\chi,W) \xrightarrow{\sim} \fact_G(X,\chi,W)$ as follows.
For an object $E \in \fact_G(X,\chi,W)$, we define an object 
\[E \otimes L:=\Bigl(E_1\otimes L \xrightarrow{\varphi^E_1\otimes \mathrm{id}_L}E_1\otimes L\xrightarrow{\varphi^E_0(\chi)\otimes \mathrm{id}_L}E_1 \otimes L(\chi)\Bigr).\]
For a morphism $f=(f_1,f_0) \in \Hom_{\fact_G(X,\chi,W)}(E,F)$, we define a morphism 
\[f \otimes L:=(f_1 \otimes \mathrm{id}_L, f_0 \otimes \mathrm{id}_L)\in \Hom_{\fact_G(X,\chi,W)}(E \otimes L, F \otimes L).\]
Restricting this dg functor to the full dg subcategory $\vect_G(X,\chi,W)$, we obtain the dg functor $-\otimes L:\vect_G(X,\chi,W) \xrightarrow{\sim} \vect_G(X,\chi,W)$. 
Note that it induces the autoequivalence 
\[- \otimes L:D^{\abs}[\fact_G(X,\chi,W)] \xrightarrow{\sim} D^{\abs}[\fact_G(X,\chi,W)].\]
\end{defin}

More generally, 
for $\mcE \in \fact_G(X,\chi,W)$, we have the derived tensor functor
\[ -\otimes^{\mathbf{L}}\mcE:\Dabs[\fact_G(X,\chi,0)] \to \Dabs[\fact_G(X,\chi,W)]\] by \cite[Proposition 4.23]{H17}.
By \cite[Definition 3.14]{H17}, there is the exact functor $\Upsilon:D^b(\Coh_G(X)) \to \Dabs[\fact_G(X,\chi,0)]$ sending a $G$-equivariant coherent sheaf $A$ to the factorization $(0 \to A \to 0)$. 
For simplicity, we also denote the composition $\Upsilon(-)\otimes^{\mathbf{L}}\mcE$ by $-\otimes^{\mathbf{L}}\mcE$ as in \cite[Definition 4.24]{H17}.

\subsection{Koszul factorizations}
Let $(X, \chi, W)^G$ be a gauged Landau-Ginzburg model.
Assume that $X$ is smooth. Let $\mcE$ be a $G$-equivariant locally free sheaf on $X$ of finite rank. Let $s:\mcE \to \mcO_X$ and $t:\mcO_X \to \mcE(\chi)$ be morphisms of $G$-equivariant coherent sheaves such that $t \circ s=W \cdot \id_{\mcE}$ and $s(\chi)\circ t=W \cdot \id_{\mcO_X}$. Let $Z_s \subset X$ be the zero scheme of the section $s \in H^0(\mcE^\vee)^G$.
The section $s$ is called regular if the codimension of $Z_s$ in $X$ is equal to the rank of $\mcE$.

\begin{defin}
We define the {\it Koszul factorization} 
$K(s,t) \in \vect_G(X,\chi,W)$ of $s$ and $t$ as 
\[K(s.t):=\Bigl(K_1\xrightarrow{\varphi_1^K} K_0\xrightarrow{\varphi_0^K} K_1(\chi)\Bigr),\]
where 
\[K_1:=\bigoplus_{n \geq 0}\bigl(\bigwedge^{2n+1}\mcE\bigr)\left(\chi^n\right), \quad 
K_0:=\bigoplus_{n \geq 0}\bigl(\bigwedge^{2n}\mcE\bigr)\left(\chi^n\right)\]
and \[\varphi^K_0,\varphi^K_1:=\vee s+t \wedge-.\]
\end{defin}

We will use the following lemma later: 

\begin{lem}[{\cite[Proposition 3.20, Lemma 3.21]{bfk14}}] \label{lem:Koszul}
The following statements hold: 
\begin{itemize}
    \item[(1)]There is the natural isomorphism 
    \[K(s,t)^\vee \simeq K(t^\vee, s^\vee).  \]
    
    \item[(2)]If $s$ is regular, there are the natural isomorphisms 
    \[\mcO_{Z_s} \simeq K(s,t), \ \mcO_{Z_s} \otimes \bigwedge^{\rk \mcE}\mcE[-\rk \mcE]\simeq K(s,t)^\vee \]
    in $\Dabs[\fact_G(X,\chi,W)]$
\end{itemize}
\end{lem}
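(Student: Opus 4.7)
The plan is to address the two parts separately, both via explicit manipulations on the Koszul factorization and, for (2), an application of the classical fact that the Koszul complex resolves a complete intersection.

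For (1), I would write the dual factorization $K(s,t)^\vee$ term by term using the canonical identifications $(\bigwedge^{k}\mcE)^\vee \simeq \bigwedge^{k}\mcE^\vee$. After reindexing the direct summands and absorbing the $\chi$-twists (so that the resulting object is placed in the standard form of Definition \ref{def:Fact}), the key check is that the transpose of the differential $\vee s + t \wedge$ becomes the differential $\vee(t^\vee) + s^\vee \wedge$ defining $K(t^\vee, s^\vee)$. This reduces to the naturality of the contraction--wedge duality, namely that the transpose of $\vee s \colon \bigwedge^{k+1}\mcE \to \bigwedge^{k}\mcE$ is, up to signs fixed by the factorization convention, $s^\vee \wedge \colon \bigwedge^{k}\mcE^\vee \to \bigwedge^{k+1}\mcE^\vee$, together with the analogous identity for $t$.

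For (2), I would first construct a natural morphism $K(s,t) \to \mcO_{Z_s}$, where $\mcO_{Z_s}$ is regarded as the factorization $(0 \to \mcO_{Z_s} \to 0)$; this makes sense because $W$ vanishes on $Z_s$. The morphism is given by the zero map on $K_1$ and by the composition $K_0 = \bigoplus_n \bigwedge^{2n}\mcE(\chi^n) \twoheadrightarrow \bigwedge^{0}\mcE = \mcO_X \twoheadrightarrow \mcO_{Z_s}$ on $K_0$. To see that this is an isomorphism in $\Dabs[\fact_G(X,\chi,W)]$, I would analyse its cone: filtering by wedge degree, the associated graded is (up to twists) the totalization of the standard Koszul complex $(\bigwedge^{\bullet}\mcE, \vee s)$, which is a resolution of $\mcO_{Z_s}$ precisely because $s$ is regular. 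Hence the cone is obtained as a totalization of an acyclic complex of factorizations and therefore lies in $\acyc_G(X,\chi,W)$. The second isomorphism then follows formally: by (1), $K(s,t)^\vee \simeq K(t^\vee, s^\vee)$, and the latter is of the same Koszul form on $\mcE^\vee$; applying the first isomorphism and invoking the standard identity $\bigwedge^{k}\mcE^\vee \simeq \bigwedge^{\rk\mcE - k}\mcE \otimes (\det\mcE)^\vee$ produces the expected twist by $\bigwedge^{\rk\mcE}\mcE$ and shift by $[-\rk\mcE]$, with the scheme-theoretic zero locus of the dualised regular section matching $Z_s$.

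The main obstacle will be the careful bookkeeping of signs, $\chi$-twists, and grading shifts, so that every identification respects the precise conventions of Definition \ref{def:Fact} and the dualisation convention for factorizations. The substantive mathematical input, namely the acyclicity of the Koszul complex in the regular case together with contraction--wedge duality for exterior algebras, is classical; once the conventions are pinned down, the proof reduces to verifying that these classical inputs lift to the factorization framework, which is essentially the content of the cited results in \cite{bfk14}.
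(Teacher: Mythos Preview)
The paper does not prove this lemma at all: it is stated with a citation to \cite[Proposition 3.20, Lemma 3.21]{bfk14} and then used as a black box. So there is no proof in the paper to compare against; your sketch is essentially the standard argument one finds in the cited reference.

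One point in your sketch deserves tightening. For the second isomorphism in (2) you write that $K(s,t)^\vee \simeq K(t^\vee,s^\vee)$ ``is of the same Koszul form'' and then speak of ``applying the first isomorphism'' and of ``the scheme-theoretic zero locus of the dualised regular section matching $Z_s$''. Read literally, this suggests invoking $\mcO_{Z_{t^\vee}} \simeq K(t^\vee,s^\vee)$, which would require $t^\vee$ to be a regular section; there is no such hypothesis, and in the applications in the paper (e.g.\ Lemma~\ref{lem:Koszzero}) $t$ is the tautological section of a line bundle pulled back to its total space, so $t^\vee$ is certainly not regular. The correct route is the one you also mention: use the perfect pairing on exterior powers to identify $\bigwedge^{k}\mcE^\vee \simeq \bigwedge^{\rk\mcE-k}\mcE \otimes (\det\mcE)^{-1}$, observe that under this identification contraction with $t^\vee$ becomes $t\wedge$ and $s^\vee\wedge$ becomes $\vee s$, and conclude that $K(t^\vee,s^\vee)$ is isomorphic to $K(s,t)$ tensored by a line bundle and shifted. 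Then the already-established isomorphism $K(s,t)\simeq \mcO_{Z_s}$ yields the claim. With that clarification your outline is correct.
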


\subsection{Kn\"{o}rrer periodicity}
Let $X$ be a smooth quasi-projective variety. Consider the trivial action of $\bC^*$ on $X$.
Let $\mcE$ be a $\bC^*$-equivariant locally free sheaf on $X$ of finite rank. 
Take a $\bC^*$-invariant regular section $s \in H^0(\mcE^\vee)^{\bC^*}$. Then we have the $\chi_1$-semi-invariant regular function $Q_s:V(\mcE(\chi_1)) \to \bC$ induced by $s$. 
Let $V(\mcE(\chi_1)):=\Spec_X(\Sym(\mcE(\chi_1)^\vee))$ be the $G$-equivariant vector bundle on $X$. Take the restriction $p:V(\mcE(\chi_1))|_{Z_s} \to Z_s$ of the $G$-equivariant vector bundle $q$ to $Z_s$. Then there is the following commutative  diagram:
\begin{equation*} 
\xymatrix{
&V(\mcE(\chi_1))|_{Z_s} \ar[r]^-i 
\ar[d]_-{p} 
&V(\mcE(\chi_1)) \ar[d]^-q \\
&Z_s \ar[r]_-j & X. 
}
\end{equation*}

Shipman \cite{S12} proved the following theorem. See also \cite{Isik}, \cite[Theorem 4.1]{H17}. 
\begin{thm}[{\cite[Theorem 3.4]{S12}}] \label{thm:Knorrer}
We have the equivalence
\begin{equation}\label{Knorrer periodicity}
 i_*\circ p^*:\Dabs[\fact_{\bC^*}(Z_s,\chi_1,0)] \xrightarrow{\sim} \Dabs[\fact_{\bC^*}(V(\mcE(\chi_1)), \chi_1, Q_s)]. 
\end{equation}
\end{thm}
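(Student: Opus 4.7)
My plan is to identify $i_* p^* \mcO_{Z_s}$ with an explicit Koszul factorization on $V(\mcE(\chi_1))$, and then use this together with the structure of the critical locus of $Q_s$ to establish essential surjectivity and full faithfulness.

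\textbf{Well-definedness and Koszul identification.} Since $s$ vanishes on $Z_s$, the function $Q_s(x,v) = \langle s(x), v\rangle$ vanishes identically on $V(\mcE(\chi_1))|_{Z_s}$, so $p^*$ followed by $i_*$ takes $\Dabs[\fact_{\bC^*}(Z_s,\chi_1,0)]$ into $\Dabs[\fact_{\bC^*}(V(\mcE(\chi_1)), \chi_1, Q_s)]$, and both operations preserve acyclic factorizations. On $V(\mcE(\chi_1))$ there is a tautological section $\tau \colon \mcO \to p^*\mcE(\chi_1)$ of $\chi_1$-weight $+1$, and together with the $\bC^*$-invariant pullback $p^*s \colon p^*\mcE \to \mcO$ it satisfies the pairing $\langle p^*s, \tau \rangle = Q_s$. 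Hence one can form the Koszul factorization $K(p^*s, \tau) \in \vect_{\bC^*}(V(\mcE(\chi_1)), \chi_1, Q_s)$ as in Definition \ref{def:Fact}. Since $s$ is regular, so is $p^*s$, with zero scheme $V(\mcE(\chi_1))|_{Z_s}$, and Lemma \ref{lem:Koszul}(2) identifies $K(p^*s, \tau) \simeq \mcO_{V(\mcE(\chi_1))|_{Z_s}} \simeq i_*p^*\mcO_{Z_s}$ in the derived factorization category. More generally, $i_*p^*(-) \simeq K(p^*s, \tau) \otimes^{\dL} p^*(-)$.

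\textbf{Equivalence and main obstacle.} A direct partial-derivative computation (using regularity of $s$) shows that the critical locus of $Q_s$ is exactly $V(\mcE(\chi_1))|_{Z_s}$. A standard support argument for derived factorization categories then implies that every object of $\Dabs[\fact_{\bC^*}(V(\mcE(\chi_1)), \chi_1, Q_s)]$ lies in the image of $i_*$, and because $p$ is a vector bundle carrying the $\chi_1$-weight scaling action on its fibers, the $\bC^*$-equivariant content forces further descent through $p^*$, yielding essential surjectivity. For full faithfulness, I would pass to the right adjoint $\Phi := p_* \circ i^!$ of $i_* \circ p^*$ constructed via the standard adjunctions for closed immersions and affine bundles, and verify the unit $\id \to \Phi \circ i_* \circ p^*$ is an isomorphism using base change together with the self-intersection formula $i^! i_* \mcO \simeq \bigwedge^\bullet N^\vee$, where $N$ is the normal bundle of $V(\mcE(\chi_1))|_{Z_s}$ in $V(\mcE(\chi_1))$. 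After pushforward along $p$, this Koszul complex should collapse onto $\mcO_{Z_s}$. The principal technical obstacle is that the $\chi_1$-equivariant weights carried by the exterior powers of $N^\vee$ must line up with the cohomological shifts in the derived factorization category for this collapse to be clean; tracking these $\chi_1$-twists against shifts is the heart of the argument, and the $\bC^*$-action via $\chi_1$ on $V(\mcE(\chi_1))$ is chosen precisely so that these cancellations work out.
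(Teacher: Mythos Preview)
The paper does not give its own proof of this theorem: it is quoted from Shipman \cite{S12} (with references to Isik and \cite[Theorem~4.1]{H17}) and used as a black box. So there is no in-paper argument to compare against; what one can assess is whether your sketch matches the strategy of those sources and whether it is internally sound.

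Your overall architecture---identify $i_*p^*\mcO_{Z_s}$ with the Koszul factorization $K(p^*s,\tau)$, then argue essential surjectivity via support on the critical locus and full faithfulness via the adjoint $p_*i^!$---is indeed the shape of the proofs in the cited references. Two points need correction or tightening.

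First, your computation of the critical locus is wrong. Differentiating $Q_s(x,v)=\langle s(x),v\rangle$ in the fibre direction gives $s(x)=0$, i.e.\ $x\in Z_s$; differentiating in the base direction gives $\langle ds(x),v\rangle=0$, and since $s$ is regular on $Z_s$ the Jacobian $ds$ has full rank there, forcing $v=0$. Hence $\mathrm{Crit}(Q_s)$ is the \emph{zero section} over $Z_s$, not all of $V(\mcE(\chi_1))|_{Z_s}$. This does not destroy your essential surjectivity step (the zero section is contained in $V(\mcE(\chi_1))|_{Z_s}$, so support still lands in the image of $i_*$), but the sentence as written is incorrect and should be fixed.

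Second, the phrase ``the $\bC^*$-equivariant content forces further descent through $p^*$'' is doing real work that you have not justified. The reason $p^*$ is an equivalence onto its image in the $\bC^*$-equivariant setting is that the weight-one scaling on the fibres makes $p$ behave like an $\mathbb{A}^1$-homotopy equivalence equivariantly: concretely one needs $\dR p_*\mcO_{V(\mcE(\chi_1))|_{Z_s}}\simeq\mcO_{Z_s}$ in the equivariant derived category, which gives $p_*p^*\simeq\id$. This is exactly what makes the unit $\id\to p_*i^!i_*p^*$ collapse after the Koszul computation you outline, and it is also what underlies essential surjectivity. In Shipman's and Hirano's treatments this is made precise either by a generator argument or by working through the adjunctions carefully; your sketch gestures at it but does not supply it.
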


The equivalence in Theorem \ref{thm:Knorrer} is called the {\it Kn\"orrer periodicity}.
By \cite[Proposition 2.14]{H17}, there is the canonical equivalence 
\begin{equation}\label{canonical equivalence}
    D^b(Z_s) \xrightarrow{\sim} \Dabs[\fact_{\bC^*}(Z_s,\chi_1,0)].
\end{equation} 
We also denote by $i_* \circ p^*$ the composition of the equivalences (\ref{Knorrer periodicity}) and (\ref{canonical equivalence}).

 \subsection{Graded matrix factorizations}

Let $S_n:=\bC[x_1,\cdots, x_n]$ be the polynomial ring. 
We regard $S_n$ as a graded $\bC$-algebra, where 
the degree of $x_i$ is one for each $i = 1, \cdots, n$. 
Let $\mathrm{gr}(S_n)$ be the category of finitely generated graded $S_n$-modules whoese morphisms are degree preserving homomorphisms. 
Let  $\mathrm{grfr}(S_n) \subset \mathrm{gr}(S_n)$ be the full subcategory of finitely generated graded free $S_n$-modules. 

We recall the definition of graded matrix factorizations.
Fix a positive integer $d \in \bZ_{>0}$. Let $W \in S_n$ be a homogeneous polynomial of degree $d$.

\begin{defin}
A {\it graded matrix factorization}  of $W$ is a sequence
\begin{equation} \label{eq:defgrfact}
M=\Bigl(M_1\xrightarrow{\varphi_1^M} M_0\xrightarrow{\varphi_0^M} M_1(d)\Bigr),
\end{equation}
 where $M_1$ and $M_0$ are finitely generated free graded $S_n$-modules and $ \varphi_1^M$ and $\varphi_0^M$ are degree preserving homomorphisms such that
\[\varphi_0^M\circ\varphi_1^M=W\cdot {\rm id}_{M_1}, \ \varphi_1^F(d)\circ\varphi_0^F=W\cdot {\rm id}_{M_0}.\] 
\end{defin}

As similar to Definitions \ref{def:Fact} and \ref{def:Fact2}, 
we can define the {\it dg category} $\MF(W)$ {\it of graded matrix factorizations of} $W$. Let $\HMF(W)$ be the homotopy category $[\MF(W)]$.
For an integer $l \in \bZ$ and a graded $S_n$-module $P=\oplus_{i \in \bZ} P_i$, we define the graded $S_n$-module $P(l)=\oplus_{i \in \bZ}P(l)_i$ by $P(l)_i:=P_{i+l}$. 
Then we obtain the degree shift functors $(l):\mathrm{grfr}(S) \xrightarrow{\sim} \mathrm{grfr}(S)$ and  $(l): \MF(W) \xrightarrow{\sim} \MF(W)$.  We put $\tau:=[(1)]:\HMF(W) \xrightarrow{\sim}\HMF(W)$.

\subsection{Matrix factorizations and Landau-Ginzburg models}

Fix a positive integer $d$ and a homogeneous polynomial $W \in S_n$ of degree $d$.
Consider the action of $\bC^*$ on the affine space $\bC^{n}$ defined by
\[\lambda\cdot (x_1, \cdots, x_n):=(\lambda x_1, \cdots, \lambda x_n) \]
for $\lambda \in \bC^*$ and $(x_1, \cdots, x_n) \in \bC^n$. 
Let $\chi_d:\bC^* \to \bC^*$ be the character of $\bC^*$ defined by  $\chi_d(t):=t^d$.
Then $W:\bC^n \to \bC$ is a $\chi_d$-semi-invariant regular function. 
The data $(\bC^n, \chi_d,W)^{\bC^*}$ is a gauged Landau-Ginzburg model.

Denote the category of $\bC^*$-equivariant locally free coherent sheaves on $\bC^n$ by $\vect_{\bC^*}(\bC^n)$. Taking global sections, we have the equivalence 
$\Gamma:\vect_{\bC^*}(\bC^n) \xrightarrow{\sim}\mathrm{grfr}(S_n)$. 
Note that, for $E \in \vect_{\bC^*}(\bC^n)$, 
the $S_n$-module $\Gamma(E)$ has the structure of a graded $S_n$-module induced by the weight decomposition with respect to the induced action of $\bC^*$ on $\Gamma(E)$. 
This induces the equivalence
$\Gamma: \vect_{\bC^*}(\bC^n, \chi_d,W) \xrightarrow{\sim} \MF(W)$ of dg categories.
Hence by Proposition \ref{prop:quasi-eq}, we have the quasi-equivalence $\MF(W) \to \Dabs\fact_{\bC^*}(\bC^n,\chi_d,W)$.
Taking homotopy categories, we obtain the equivalence 
\[\HMF(W) \xrightarrow{\sim} \Dabs[\fact_{\bC^*}(\bC^n,\chi_d,W)].\]

\begin{rmk}
By \cite[Theorem 40]{orl09}  and \cite[Lemma 4.8]{BDFIK18}, the triangulated category $\HMF(W)$ is idempotent complete.
\end{rmk}

\subsection{Serre functor}

Fix a positive integer $d$ and a homogeneous polynomial $W \in S_n$ of degree $d$.
The Serre functor of $\HMF(W)$ is described as follows.
\begin{thm}[{\cite[Theorem 3.8]{KST09}}]\label{thm:Serre}
The Serre functor $S$ of $\HMF(W)$ is given by $S:=\tau^{d-n}[n-2]$.
\end{thm}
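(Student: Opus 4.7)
The plan is to reduce the problem to an equivariant Serre duality computation. Via the equivalence $\HMF(W) \simeq \Dabs[\fact_{\bC^*}(\bC^n, \chi_d, W)]$ established in the previous subsection, the task becomes computing the Serre functor of the derived factorization category of the gauged Landau--Ginzburg model $(\bC^n, \chi_d, W)^{\bC^*}$. Under this equivalence, the grading shift $\tau = [(1)]$ on $\HMF(W)$ corresponds to twisting by the character $\chi_{-1}$ on the factorization side: unpacking the conversion $P(l)_i = P_{i+l}$ against the weight-decomposition dictionary, tensoring a sheaf by $\chi_d$ corresponds to the graded shift $(-d)$, so $\chi_d \leftrightarrow \tau^{-d}$.

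First I would identify the equivariant dualizing object. The canonical bundle $\omega_{\bC^n}$ is trivial as a line bundle, but as a $\bC^*$-equivariant sheaf under the scaling action $\lambda \cdot x_i = \lambda x_i$ it carries the character $\chi_n$, since the generator $dx_1 \wedge \cdots \wedge dx_n$ transforms with weight $n$. The factorization structure with potential of weight $\chi_d$ contributes an additional character twist $\chi_{-d}$ upon dualizing a factorization $(E_1 \to E_0 \to E_1(\chi_d))$, because rotating the dualized sequence back into the standard form absorbs one factor of $\chi_d$. The net equivariant dualizing character is therefore $\chi_{n-d}$, and translating via the dictionary $\chi_{n-d} \leftrightarrow \tau^{d-n}$ accounts for the grading-shift part of the Serre functor.

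Next I would pin down the cohomological shift. Ordinary Serre duality on $\bC^n$ would produce $[n]$, but the matrix-factorization category is effectively two-periodic and the correct shift for $\Dabs[\fact_{\bC^*}(\bC^n, \chi_d, W)]$ is $[n-2]$. The cleanest justification uses Kn\"orrer periodicity (Theorem \ref{thm:Knorrer}): adding two auxiliary variables with a quadratic potential $uv$ leaves the factorization category unchanged while increasing the ambient dimension by $2$, forcing the shift to depend on $n-2$ rather than $n$. A complementary sanity check is the Calabi--Yau case $d=n$: then Orlov's equivalence identifies $\HMF(W)$ with $D^b(X)$ for the Calabi--Yau $(n-2)$-fold $X = \{W=0\} \subset \bP^{n-1}$, whose Serre functor is $[n-2]$, matching $\tau^{d-n}[n-2] = [n-2]$.

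The main obstacle is the careful bookkeeping of character twists and cohomological shifts through the chain of equivalences, particularly the $\chi_{-d}$ contribution from dualizing a factorization and the reduction from $[n]$ to $[n-2]$. An alternative, more concrete route (following Kajiura--Saito--Takahashi) avoids the general machinery: construct the Serre duality pairing explicitly using the classical residue of $W$ at the origin of $\bC^n$, obtaining a trace map
\[
\mathrm{tr} \colon \Hom^{0}_{\HMF(W)}\bigl(E, E \otimes \tau^{d-n}[n-2]\bigr) \to \bC,
\]
and verify non-degeneracy of the induced pairing by reducing to a set of compact generators of $\HMF(W)$ such as Koszul-type factorizations. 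Either approach delivers the desired formula $S = \tau^{d-n}[n-2]$.
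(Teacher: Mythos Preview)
The paper does not supply a proof of this theorem; it is quoted directly from \cite[Theorem~3.8]{KST09}, and the paper then uses diagram~(\ref{degree shift3}) to translate it into the statement that $-\otimes\mcO_{\bC^n}(\chi_{d-n})[n-2]$ is the Serre functor of $D^{\abs}[\fact_{\bC^*}(\bC^n,\chi_d,W)]$. There is therefore no in-paper argument to compare your proposal against.

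Your sketch does land on the correct formula, but only because two sign slips cancel. Diagram~(\ref{degree shift3}) records that $\tau$ corresponds to $-\otimes\mcO_{\bC^n}(\chi_1)$, so the dictionary in the paper's conventions is $\chi_k\leftrightarrow\tau^{k}$, not $\chi_k\leftrightarrow\tau^{-k}$ as you assert; consistently, the paper's Serre character on the factorization side is $\chi_{d-n}$, not your $\chi_{n-d}$. Beyond this bookkeeping issue, the proposal remains a heuristic outline rather than a proof: the claimed $\chi_{-d}$ contribution from dualizing a factorization and the reduction of the shift from $[n]$ to $[n-2]$ are asserted rather than derived, and your Kn\"orrer argument for the shift needs more care in the graded setting, since the auxiliary variables $u,v$ must be assigned $\bC^*$-weights compatible with $\chi_d$. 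Your alternative route via an explicit residue pairing is closer in spirit to what \cite{KST09} actually does, but as written it too is only a plan.
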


Using Theorem \ref{thm:Serre}, we describe the Serre functor of $\Dabs[\fact_G(\bC^n, \chi_d,W)]$. 
Note that the following diagram commutes:
\begin{equation} \label{degree shift2}
\xymatrix{
&\vect_{\bC^*}(\bC^n, \chi_d,W) \ar[r]^-\Gamma 
\ar[d]_-{-\otimes \mcO_{\bC^n}(\chi_1)} 
&\MF(W) \ar[d]^-{(1)} \\
&\vect_{\bC^*}(\bC^n, \chi_d, W) \ar[r]_-\Gamma &\MF(W). 
}
\end{equation}

Taking homotopy categories, we obtain the commutative diagram:
\begin{equation} \label{degree shift3}
\xymatrix{
&D^{\abs}[\fact_{\bC^*}(\bC^n,\chi_d,W)] \ar[r]^{\ \ \ \ \ \ \  \sim}
\ar[d]_-{-\otimes \mcO_{\bC^n}(\chi_1)} 
&\HMF(W) \ar[d]^-{\tau} \\
&D^{\abs}[\fact_{\bC^*}(\bC^n,\chi_d,W)] \ar[r]^{\ \ \ \ \ \ \  \sim} &\HMF(W). 
}
\end{equation}

By Theorem \ref{thm:Serre} and \eqref{degree shift3}, we have the following.
\begin{thm}[{\cite[Theorem 3.8]{KST09}}]
The functor
\[-\otimes \mcO_{\bC^n}(\chi_{d-n})[n-2]:D^{\abs}[\fact_{\bC^*}(\bC^n,\chi_d,W)]\xrightarrow{\sim} D^{\abs}[\fact_{\bC^*}(\bC^n,\chi_d,W)]\] is the Serre functor.
\end{thm}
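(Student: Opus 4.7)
The statement is essentially a transport of structure along the equivalence
\[
D^{\abs}[\fact_{\bC^*}(\bC^n,\chi_d,W)] \xrightarrow{\sim} \HMF(W)
\]
induced (via Proposition \ref{prop:quasi-eq}) by the global sections equivalence $\Gamma$, so my plan is to simply move Theorem \ref{thm:Serre} across this equivalence using the compatibility recorded in diagram \eqref{degree shift3}.

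\textbf{Step 1.} I would first note that the Serre functor, when it exists, is unique up to canonical isomorphism, and is preserved by any exact equivalence of triangulated categories. Hence it suffices to identify the image of the Serre functor $\tau^{d-n}[n-2]$ of $\HMF(W)$ under the inverse of the equivalence above.

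\textbf{Step 2.} The key computation is then to identify $\tau^{d-n}$ on the left-hand side. The commutative diagram \eqref{degree shift3} shows that the autoequivalence $\tau=[(1)]$ of $\HMF(W)$ corresponds, under the equivalence, to the twist $-\otimes \mcO_{\bC^n}(\chi_1)$. Iterating $d-n$ times (and using that the equivalence is compatible with composition of autoequivalences up to natural isomorphism), I obtain that $\tau^{d-n}$ corresponds to $-\otimes \mcO_{\bC^n}(\chi_{d-n})$. The shift $[n-2]$ commutes with any exact functor, so it transports to the shift $[n-2]$ on the factorization side. Combining, the Serre functor of $D^{\abs}[\fact_{\bC^*}(\bC^n,\chi_d,W)]$ is $-\otimes \mcO_{\bC^n}(\chi_{d-n})[n-2]$.

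\textbf{Step 3.} Strictly speaking the diagram \eqref{degree shift3} is stated at the level of the homotopy categories of the dg enhancements, but this is exactly the level at which the Serre functor of $D^{\abs}[\fact_{\bC^*}(\bC^n,\chi_d,W)]$ is defined, so no extra argument is needed.

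There is no genuine obstacle here: given the commutative square \eqref{degree shift3} and Theorem \ref{thm:Serre}, the statement follows immediately from the uniqueness of Serre functors. The only thing to be mildly careful about is checking that the equivalence $\Gamma$ really does intertwine the twist $-\otimes \mcO_{\bC^n}(\chi_1)$ with the grading shift $(1)$ on graded free modules, but this is precisely the content of \eqref{degree shift2}, which is essentially tautological from the definition of the graded structure on $\Gamma(E)$ via the $\bC^*$-weight decomposition.
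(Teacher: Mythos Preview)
Your proposal is correct and follows exactly the paper's approach: the paper simply states ``By Theorem \ref{thm:Serre} and \eqref{degree shift3}, we have the following,'' i.e., it transports the Serre functor $\tau^{d-n}[n-2]$ across the equivalence using the commutativity of \eqref{degree shift3}, precisely as you do.
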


\section{Construction of perverse schobers} \label{sec:construct}
Let $n \geq 1$ be a positive integer, 
$W \in \bC[x_1, \cdots, x_{n+2}]$ be 
a homogeneous polynomial of degree $n+2$. 
We take the polynomial $W$ general so that 
the hypersurface 
$X \coloneqq (W=0) \subset \bP^{n+1}$ 
is smooth, which is a projective Calabi--Yau variety 
of dimension $n$. 
We denote by $\mcO_X(1) \coloneqq \mcO_{\bP^{n+1}}(1)|_X$. 

In this section, we construct a perverse schober 
using the following Orlov's theorem: 

\begin{thm}[\cite{orl09}, {\cite[Proposition 5.8]{bfk12}}]
\label{thm:Orlov1}
There exists an equivalence 
\begin{equation}
\psi \colon D^b(X) \xrightarrow{\sim} \HMF(W) 
\end{equation}
between triangulated categories. 
Moreover, the following diagram commutes: 
\begin{equation} \label{eq:Orlov1}
\xymatrix{
&D^b(X) \ar[r]^-\psi 
\ar[d]_-{\ST_{\mcO_X} \circ (- \otimes \mcO_X(1))} 
&\HMF(W) \ar[d]^-\tau \\
&D^b(X) \ar[r]_-\psi &\HMF(W). 
}
\end{equation}
\end{thm}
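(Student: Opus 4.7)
The plan is to deduce both statements from the variation of GIT (VGIT) framework of Ballard--Favero--Katzarkov (\cite{bfk12, bfk19}), which will be reviewed in Section \ref{sec:vgit}.

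First, I would set up the VGIT as follows. Consider the affine space $\bC^{n+3}$ with coordinates $(x_1, \ldots, x_{n+2}, p)$, carrying the action of $G = \bC^*$ with weights $(1, 1, \ldots, 1, -(n+2))$. The function $f = p \cdot W(x_1, \ldots, x_{n+2})$ is $G$-invariant, so $(\bC^{n+3}, \mathrm{triv}, f)^{G}$ is a gauged Landau--Ginzburg model. The two generic GIT chambers, obtained by deleting the $p$-axis or the $x$-axes, respectively, give the quotients $\mathrm{Tot}(\mcO_{\bP^{n+1}}(-(n+2)))$ and $[\bC^{n+2}/\bC^*]$ (the $p$-direction dropping out because $f$ is linear in $p$). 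Since the sum of $G$-weights is zero, the Calabi--Yau balancing condition holds and the window widths on the two sides match.

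Second, I would identify the derived factorization categories on each side. By Kn\"orrer periodicity (Theorem \ref{thm:Knorrer}), the first chamber yields $D^b(X)$, since the critical locus of $f$ on $\mathrm{Tot}(\mcO_{\bP^{n+1}}(-(n+2)))$ is the zero section restricted to $\{W = 0\} = X$. The second chamber yields $\Dabs[\fact_{\bC^*}(\bC^{n+2}, \chi_{n+2}, W)] \simeq \HMF(W)$. The BFK VGIT theorem then produces the equivalence $\psi : D^b(X) \xrightarrow{\sim} \HMF(W)$ via a window subcategory.

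Third, for the commutativity of (\ref{eq:Orlov1}), I would trace the $\bC^*$-equivariant structure through the VGIT equivalence. Under the identification $\HMF(W) \simeq \Dabs[\fact_{\bC^*}(\bC^{n+2}, \chi_{n+2}, W)]$, the functor $\tau$ is the tensor product with $\mcO_{\bC^{n+2}}(\chi_1)$ (cf.\ diagram \eqref{degree shift3}). In the VGIT picture this shift lifts to a specific $G$-equivariant line bundle twist on $\bC^{n+3}$, which, transported through the window equivalence and Kn\"orrer periodicity, descends to an autoequivalence of $D^b(X)$. The component $- \otimes \mcO_X(1)$ is the transparent part coming from the grading on the homogeneous coordinate ring of $\bP^{n+1}$, while the factor $\ST_{\mcO_X}$ arises as the wall-crossing correction: the structure sheaf $\mcO_X$ is the unique (up to shift) object that lies in the difference between the two windows corresponding to $\tau$ and $(-\otimes \mcO_X(1))$, so the spherical twist around it appears naturally from the cone formula for wall-crossing equivalences.

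The most delicate step will be the last identification of the wall-crossing contribution as precisely $\ST_{\mcO_X}$ (rather than its inverse, or some composition with a shift). This requires matching window subcategories carefully and comparing to \cite[Proposition 5.8]{bfk12}, where this form of the autoequivalence for Calabi--Yau hypersurfaces is computed explicitly; once this is in place, the commutativity of (\ref{eq:Orlov1}) is immediate.
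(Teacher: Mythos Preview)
Your strategy---VGIT window subcategories, Kn\"orrer periodicity on the geometric side, and the identification of the window-shift autoequivalence as a spherical twist---is exactly the route the paper takes in Section~\ref{sec:vgit}. The only point to adjust is the group: the paper works with $G=\bC^*\times\bC^*$, the second factor being the R-charge that makes $Q_W$ semi-invariant for a nontrivial character (so that one lands in the $\bZ$-graded category $\HMF(W)$ rather than $\bZ/2$-graded or $\mu_{n+2}$-equivariant factorizations), and it pins down the wall-crossing contribution as $\ST_{\mcO_X}$ explicitly via Propositions~\ref{prop:sphfact} and~\ref{prop:window=st} (using \cite{hls16} and a Koszul-factorization computation) rather than deferring to \cite{bfk12}.
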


\begin{rmk}
More precisely, we have equivalences 
\[
\psi_w \colon D^b(X) \to \HMF(W) 
\]
indexed by integers $w \in \bZ$, 
and we have the commutative diagram (\ref{eq:Orlov1}) 
for a particular choice of $w \in \bZ$. 
See Section \ref{sec:vgit} for more detail. 
\end{rmk}

Take a disk 
$\Delta \subset \bP^1 \setminus \{0, \infty\}$ 
containing the point $1$. 
We fix two distinct points 
$x^\pm \in \Delta \setminus \{1\}$. 
First we construct 
a $\{x^\pm\}$-coordinatized local system of categories 
on $\bP^1 \setminus \{0, 1, \infty \}$. 

Let $a, b \in \pi_1(\bP^1 \setminus \{0, 1, \infty \}, x^-)$ 
be simple loops around $\infty, 1$, respectively. 
Let $\gamma$ be a path from $x^-$ to $x^+$ 
which is contained in $\Delta$ and does not 
go around the point $1 \in \Delta$. 
See Figure \ref{fig:generator} below. 
Then the groupoid 
$\pi_1(\bP^1 \setminus \{0, 1, \infty \}, \{x^\pm\})$ 
is freely generated by the three paths $a, b, \gamma$.

\begin{figure}[htbp]
    \centering
    \begin{tikzpicture}
  \shade[ball color = gray!40, opacity = 0.4] (0,0) circle (5cm);
  \draw (0,0) circle (5cm);
  \draw (-5,0) arc (180:360:5 and 1.0);
  \draw[dashed] (5,0) arc (0:180:5 and 1.0);
  \draw (0, -1) circle (0.15cm) node[left]{$1~$}; 
  \fill[fill=white] (0, -1) circle (0.14cm); 
  \draw (0, 5) circle (0.15cm); 
  \draw (0, 4.9) node[below]{$\infty$}; 
  \fill[fill=white] (0, 5) circle (0.14cm); 
  \draw (0, -5) circle (0.15cm); 
  \fill[fill=white] (0, -5) circle (0.14cm); 
  \draw (0, -5.35) node{$0$}; 
  \fill[fill=black] (-2.5, -0.87) circle (0.15cm); 
  \fill[fill=black] (2.5, -0.87) circle (0.15cm); 
  \draw[thick, ->] (-2.3, -0.8) to 
  [out=45, in=135] (2.3, -0.8); 
  \draw[thick] (-2.5, -0.6) to 
  [out=90, in=180] (0, 5.2); 
  \draw[thick, ->] (0, 5.2) to 
  [out=0, in=40] (-2.4, -0.6); 
  \draw[thick] (-2.2, -0.85) to 
  [out=10, in=90] (0.5, -1); 
  \draw[thick, ->] (0.5, -1) to 
  [out=270, in=340] (-2.2, -1); 
  \draw (0, 0.1) node[above]{$\gamma$}; 
  \draw (-2.8, -0.9) node[below]{$x^-$}; 
  \draw (2.8, -0.95) node[below]{$x^+$}; 
  \draw (0.6, 5.2) node[right]{$a$}; 
  \draw (0, -1.6) node[below]{$b$}; 
\end{tikzpicture}
    \caption{Generators of the groupoid 
    $\pi_1(\bP^1 \setminus \{0, 1, \infty \}, \{x^\pm\})$.}
    \label{fig:generator}
\end{figure}
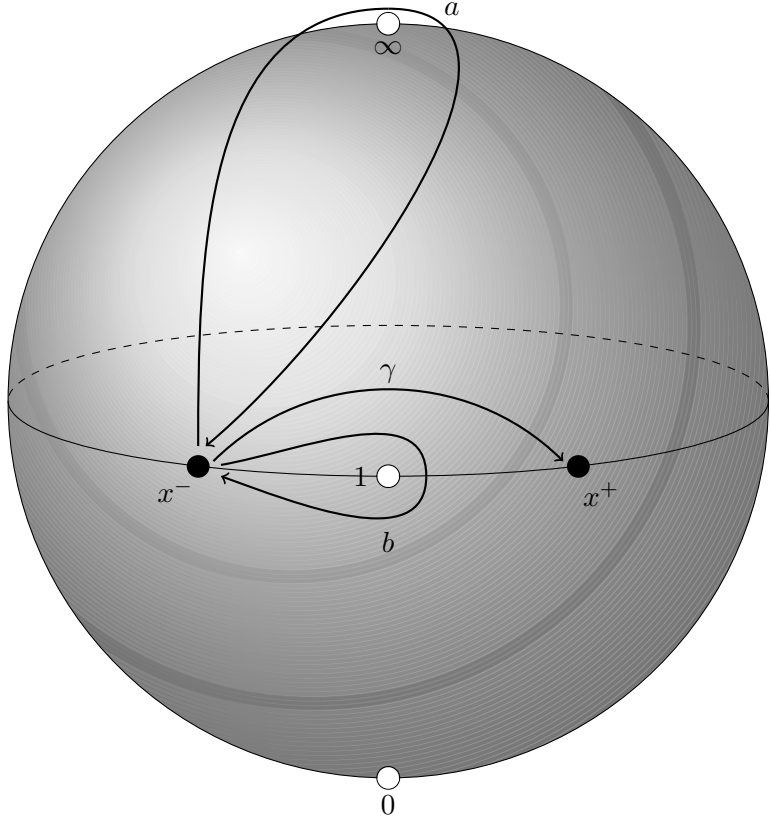

We first prepare the following two lemmas: 

\begin{lem} \label{lem:x-Loc}
There is a $\{x^-\}$-coordinatized local system $\fK$ 
on $\bP^1 \setminus \{0, 1, \infty\}$ with assignments 
\begin{align*}
    &x_- \mapsto D^b(X), \\
    &a \mapsto (-) \otimes \mcO_X(1), \quad 
    b \mapsto \ST_{\mcO_X}. 
\end{align*}
\end{lem}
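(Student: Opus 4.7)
The plan is to exploit the fact that $\pi_1(\bP^1 \setminus \{0, 1, \infty\}, x^-)$ is a free group of rank two on the loops $a, b$. Since the group is free, producing a local system in the sense of Definition \ref{def:Loc} based at $x^-$ reduces to (i) writing down two autoequivalences of $D^b(X)$ to serve as $\rho_a, \rho_b$, and (ii) assembling these and their inverses into a pseudo-functor out of $\pi_1(\bP^1 \setminus \{0, 1, \infty\}, x^-)$.

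For step (i), the autoequivalence $(-) \otimes \mcO_X(1)$ is manifestly invertible with inverse $(-) \otimes \mcO_X(-1)$. For $\ST_{\mcO_X}$ to exist as an autoequivalence I need $\mcO_X$ to be spherical in $D^b(X)$. Because $X$ is Calabi-Yau of dimension $n$, Serre duality reduces this to verifying $H^i(X, \mcO_X) = 0$ for $0 < i < n$, which is standard: apply the long exact sequence attached to $0 \to \mcO_{\bP^{n+1}}(-n-2) \to \mcO_{\bP^{n+1}} \to \mcO_X \to 0$, using that $H^i(\bP^{n+1}, \mcO(k))$ vanishes in the relevant range.

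For step (ii), I would use the unique reduced-word representation of elements of the free group on $\{a, b\}$: for each element $g$, write its reduced word in $a^{\pm 1}, b^{\pm 1}$ and define $\rho_g$ to be the corresponding composition of $(-) \otimes \mcO_X(\pm 1)$ and $\ST_{\mcO_X}^{\pm 1}$. Once unit and counit isomorphisms witnessing $\rho_a \rho_{a^{-1}} \simeq \id \simeq \rho_{a^{-1}} \rho_a$ (and likewise for $b$) are fixed, the natural isomorphism $\theta_{g, h} \colon \rho_g \rho_h \to \rho_{gh}$ is produced by iteratively cancelling adjacent inverse pairs in the concatenated word until one reaches its reduced form. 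The coherence pentagon (\ref{eq:Loccomm}) then boils down to strict associativity of composition of functors and naturality of the chosen unit/counit isomorphisms.

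The hard part is essentially bookkeeping: one must check that $\theta_{g, h}$ is independent of the order in which cancellations are carried out, which ultimately rests on the uniqueness of reduced words in a free group together with a diamond-lemma argument. The sphericity step is essential but completely standard, and nothing else in the construction requires any input beyond the freeness of $\pi_1(\bP^1 \setminus \{0, 1, \infty\}, x^-)$.
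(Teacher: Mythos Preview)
Your proposal is correct and reaches the same conclusion, but by a different route than the paper. The paper's proof is a one-line appeal to obstruction theory: a homomorphism $G \to \Auteq(D^b(X))$ lifts to a categorical action (i.e., a local system in the sense of Definition~\ref{def:Loc}) if and only if an obstruction class in $H^3(G, \bC^*)$ vanishes (this is \cite[Theorem~2.1(1)]{bo20}), and since $G = \pi_1(\bP^1 \setminus \{0,1,\infty\}, x^-)$ is free, $H^3(G, \bC^*) = 0$. You instead construct the pseudo-functor by hand, using reduced words to define $\rho_g$ and iterated unit/counit cancellations to define $\theta_{g,h}$, with the diamond-lemma argument supplying the pentagon. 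Your approach is self-contained and avoids importing the obstruction machinery, at the cost of the bookkeeping you acknowledge; the paper's approach is slicker and makes transparent that the only input is the freeness of $G$ (and would apply verbatim to any group with $H^3(G,\bC^*)=0$). Both arguments are ultimately encoding the same fact. Your explicit check that $\mcO_X$ is spherical is a nice touch the paper leaves implicit.
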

\begin{proof}
Consider a homomorphism 
\begin{equation} \label{eq:homfree}
G \coloneqq \pi_1(\bP^1 \setminus \{0, 1, \infty\}, \{x^-\}) 
\to \Auteq D^b(X), 
\end{equation}
which sends the generators $a, b$ to 
the autoequivalences $(-) \otimes \mcO_X(1)$, 
$\ST_{\mcO_X}$, respectively. 

By \cite[Theorem 2.1 (1)]{bo20} 
(see also Section 2.3 in the same paper), 
there exists an obstruction  class 
$o \in H^3(G, \bC^*)$ 
for lifting the homomorphism (\ref{eq:homfree}) 
to an action of $G$ on the category $D^b(X)$. 
Since $G=\pi_1(\bP^1 \setminus \{0, 1, \infty\}, \{x^-\})$ 
is a free group, we have the vanishing 
$H^3(G, \bC^*)=0$, and hence the assertion holds. 
\end{proof}

\begin{rmk}
Although \cite[Theorem 2.1]{bo20} is 
stated only for finite groups, 
its proof works for infinite groups 
without any modifications. 
\end{rmk}

\begin{lem}[{\cite[Lemma 2.2]{bo20}}] \label{lem:commNTs}
Let $f, f' \colon \mcB \to \mcC$, 
$g, g' \colon \mcA \to \mcB$ be functors, 
and let $\alpha \colon f \to f'$, $\beta \colon g \to g'$ 
be natural transforms. 
Then the following diagram commutes: 
\[
\xymatrix{
&f \circ g \ar[r]^{\alpha g} \ar[d]_{f\beta} 
&f' \circ g \ar[d]^{f' \beta} \\
&f \circ g' \ar[r]_{\alpha g'} &f' \circ g'. 
}
\]
\end{lem}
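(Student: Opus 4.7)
The plan is to reduce the commutativity of the diagram of natural transformations to the pointwise naturality of $\alpha$, evaluated at a well-chosen morphism. Since equality of natural transformations can be checked object-by-object, it suffices to show that for every object $a \in \mcA$ the diagram
\[
\xymatrix{
f(g(a)) \ar[r]^{\alpha_{g(a)}} \ar[d]_{f(\beta_a)} & f'(g(a)) \ar[d]^{f'(\beta_a)} \\
f(g'(a)) \ar[r]_{\alpha_{g'(a)}} & f'(g'(a))
}
\]
commutes in $\mcC$.

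For that, I would invoke the naturality of $\alpha \colon f \to f'$, which says that for any morphism $h \colon b \to b'$ in $\mcB$, the square
\[
\xymatrix{
f(b) \ar[r]^{\alpha_b} \ar[d]_{f(h)} & f'(b) \ar[d]^{f'(h)} \\
f(b') \ar[r]_{\alpha_{b'}} & f'(b')
}
\]
commutes. Taking $b = g(a)$, $b' = g'(a)$, and $h = \beta_a \colon g(a) \to g'(a)$ gives exactly the required square. Unraveling the notation $f\beta$ (whose component at $a$ is $f(\beta_a)$) and $\alpha g$ (whose component at $a$ is $\alpha_{g(a)}$), this finishes the argument.

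There is essentially no obstacle: this is the classical interchange (or Godement) law, and no structural hypothesis beyond the naturality of $\alpha$ is needed. The only thing to double-check is that the vertical and horizontal arrows in the displayed diagram are parsed as the whiskerings intended by the authors, i.e. $f\beta$ is the natural transformation $f \circ g \to f \circ g'$ with component $f(\beta_a)$ at $a$, and similarly $\alpha g$ has component $\alpha_{g(a)}$; once this identification is made, the proof is a one-line appeal to naturality of $\alpha$ at the morphism $\beta_a$.
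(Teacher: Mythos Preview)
Your proof is correct; it is exactly the standard argument for the interchange (Godement) law. The paper itself does not supply a proof of this lemma at all---it merely cites it from \cite{bo20}---so there is nothing further to compare.
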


\begin{prop} \label{prop:Loc}
There is an $\{x^\pm\}$-coordinatized local system $\fL$ 
on $\bP^1 \setminus \{0, 1, \infty \}$ with assignments 
\begin{equation}
\begin{aligned}
  &x^- \mapsto D^b(X), \quad x^+ \mapsto \HMF(W), \\
  &a \mapsto (-) \otimes \mcO_X(1), \quad 
  b \mapsto \ST_{\mcO_X}, \quad 
  \gamma \mapsto \psi. 
\end{aligned}
\end{equation}
\end{prop}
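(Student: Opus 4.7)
The plan is to extend the local system $\fK$ of Lemma \ref{lem:x-Loc} along the path $\gamma$ using the Orlov equivalence $\psi$. Concretely, in addition to the assignments already made by $\fK$ at $x^-$, I would set $x^+ \mapsto \HMF(W)$ and $\rho_\gamma \coloneqq \psi$. The fundamental groupoid $G \coloneqq \pi_1(\bP^1 \setminus \{0, 1, \infty\}, \{x^\pm\})$ of the thrice-punctured sphere on the two basepoints $\{x^\pm\}$ is freely generated by $a, b, \gamma$, so this assignment determines a preassignment of an equivalence to every morphism of $G$ via composition. For instance, any path from $x^-$ to $x^+$ is uniquely of the form $\gamma \cdot w$ for a loop $w$ based at $x^-$, and is sent to $\psi \circ \rho_w$; analogously for loops at $x^+$, which after conjugation by $\gamma$ become loops at $x^-$ already handled by $\fK$.

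To promote this preassignment to a genuine coordinatized local system (that is, to produce the coherence data $\theta_{g, h}$ satisfying (\ref{eq:Loccomm})), I would repeat the obstruction-theoretic argument from the proof of Lemma \ref{lem:x-Loc}. By \cite[Theorem 2.1 (1)]{bo20}, the obstruction to such a lift is a class $o \in H^3(G, \bC^*)$; although \emph{loc.\ cit.}~is stated for finite groups, its proof goes through for arbitrary groups, and extends to groupoids by the same argument. The classifying space of the free groupoid $G$ has the homotopy type of a one-dimensional CW complex (a wedge of two circles together with an edge joining the two basepoints), so $G$ has cohomological dimension one. Hence $H^3(G, \bC^*) = 0$ and the obstruction vanishes.

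The main obstacle is verifying that the obstruction theory of \cite{bo20} applies verbatim in the groupoid setting. A direct alternative, which I would carry out if the cohomological argument proves awkward, is to construct the $\theta_{g, h}$ by hand. Since $G$ is freely generated, it suffices to specify the natural isomorphisms on composable pairs of generators: for pairs not involving $\gamma^{\pm 1}$, take the data from $\fK$; whenever $\gamma^{\pm 1}$ appears, take the identity $2$-morphism (since $\psi$ is assigned as a strict equivalence). Lemma \ref{lem:commNTs} then reduces the pentagon identity (\ref{eq:Loccomm}) on any generating triple to the corresponding identity already satisfied by $\fK$, so the coherence conditions hold throughout $G$.
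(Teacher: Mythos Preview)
Your cohomological approach is a legitimate route and is different from what the paper actually does. The paper carries out the explicit construction you sketch as your alternative: it writes every morphism of the groupoid uniquely as $g = \gamma^\epsilon g' \gamma^\delta$ with $\epsilon \in \{0,1\}$, $\delta \in \{0,-1\}$, and $g'$ a loop at $x^-$, sets $\rho_g \coloneqq \psi^\epsilon \mu_{g'} \psi^\delta$, and then defines the compositors $\theta_{g,h}$ case by case, verifying the pentagon (\ref{eq:Loccomm}) via Lemma \ref{lem:commNTs} together with the coherence already present in $\fK$. Your obstruction argument buys a shorter proof at the cost of invoking a groupoid version of \cite[Theorem 2.1]{bo20}; this is harmless, since the groupoid is connected and hence equivalent to the free group $\pi_1(\bP^1\setminus\{0,1,\infty\}, x^-)$, so its cohomology is concentrated in degrees $\leq 1$.

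Your hands-on alternative, however, contains a genuine slip. You write ``whenever $\gamma^{\pm 1}$ appears, take the identity $2$-morphism (since $\psi$ is assigned as a strict equivalence)''. But $\psi$ is only an equivalence of categories, not an isomorphism: there is no reason for $\psi^{-1}\psi$ or $\psi\psi^{-1}$ to equal the identity functor on the nose, so for the composable pair $(\gamma^{-1},\gamma)$ the compositor $\theta_{\gamma^{-1},\gamma}\colon \rho_{\gamma^{-1}}\rho_\gamma \to \rho_{\id_{x^-}}$ cannot be taken to be the identity. This is precisely where the paper introduces the adjunction counits $\eta_\psi\colon \psi^{-1}\psi \to \id_{D^b(X)}$ and $\eta_{\psi^{-1}}\colon \psi\psi^{-1}\to\id_{\HMF(W)}$, building every $\theta_{g,h}$ from these counits together with the data $\nu_{g',h'}$ coming from $\fK$. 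Relatedly, the claim that ``it suffices to specify the natural isomorphisms on composable pairs of generators'' is not correct as stated: the coherence data must also account for cancellation of a generator against its inverse in a reduced word, and that is exactly where the counits enter. With those corrections your second approach becomes the paper's proof.
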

\begin{proof}

Let $\fK$ be the local system of categories 
constructed in Lemma \ref{lem:x-Loc}. 
By definition, it consists of the data
\[
\mu_{g'} \colon D^b(X) \xrightarrow{\sim} D^b(X), \quad 
\nu_{g', h'} \colon \mu_{g'} \mu_{h'} 
\xrightarrow{\sim} \mu_{g'h'}
\]
for all elements 
$g', h' \in \pi_1(\bP^1 \setminus \{0, 1, \infty \}, \{x^-\})$. 
Moreover, we have the following commutative diagrams 
\begin{equation} \label{eq:x-Loccomm}
\xymatrix{
&\mu_{g'} \mu_{h'} \mu_{k'} 
\ar[rr]^{\mu_{g'} \nu_{h', k'}} 
\ar[d]_{\mu_{g', h'} \mu_{k'}} 
&
&\mu_{g'} \mu_{h'k'} \ar[d]^{\nu_{g', h'k'}} \\
&\mu_{g'h'} \mu_{k'} \ar[rr]_{\nu_{g'h', k'}} 
&
&\mu_{g'h'k'}
}
\end{equation}
for all $g', h', k' \in 
\pi_1(\bP^1 \setminus \{0, 1, \infty \}, \{x^-\})$. 

We extend this local system $\fK$ to 
an $\{x^\pm\}$-coordinatized local system $\fL$. 
First observe that any morphism $g$ in the groupoid 
$\pi_1(\bP^1 \setminus \{0, 1, \infty \}, \{x^\pm\})$ 
can be uniquely written as follows: 
\begin{equation} \label{eq:redword}
    g= \gamma^{\epsilon} g' \gamma^{\delta}, 
    \quad \epsilon \in \{0, 1\}, \delta \in \{0, -1\}, 
    g' \in \pi_1(\bP^1 \setminus \{0, 1, \infty\}, \{x^-\}).
\end{equation}
To the morphism (\ref{eq:redword}), we associate the functor 
\[
\rho_g\coloneqq \psi^\epsilon \circ \mu_{g'} \circ \psi^\delta. 
\]
For each pair $g, h$ of composable morphisms, 
we associate a natural transform 
$\theta_{g, h} \colon \rho_g\rho_h \to \rho_{gh}$ as follows: 
\begin{itemize}
    \item When $g=\gamma, h=\gamma^{-1}$, 
    then we define $\theta_{\gamma, \gamma^{-1}}$ 
    to be the adjoint counit 
    \[
    \eta_{\psi^{-1}} \colon \psi \circ \psi^{-1} 
    \to \id_{\HMF(W)}. 
    \]
    \item When we have $g=\gamma^{\epsilon}g'\gamma^{-1}$, 
    $h=\gamma h'\gamma^\delta$ 
    for some $\epsilon, -\delta \in \{0, 1\}$, 
    $g', h' \in 
    \pi_1(\bP^1 \setminus \{0, 1, \infty\}, \{x^-\})$, 
    then we define $\theta_{g, h}$ as 
    \[
    \theta_{g, h} \coloneqq \nu_{g', h'} \eta_{\psi} \colon 
    \rho_g \rho_{h} \to \rho_{gh}, 
    \]
    where 
    $\eta_\psi \colon \psi^{-1} \circ \psi \to \id_{D^b(X)}$ 
    is the adjoint counit. 
    \item Otherwise, we put 
    $\theta_{g, h} \coloneqq \nu_{g', h'}$. 
\end{itemize}

We need to check the commutativity of 
the diagram (\ref{eq:Loccomm}) for 
all composable paths $g, h, k$. 
We prove it only for the case when we have 
$h=\gamma h'$ with 
$h' \in \pi_1(\bP^1 \setminus \{0, 1, \infty\}, \{x^-\})$. 
The other cases can be proved in a similar way. 
As in (\ref{eq:redword}), 
we write 
\[
g=\gamma^\epsilon g' \gamma^{-1}, \quad 
k=k' \gamma^\delta. 
\] 

Consider the following diagram: 
\[
\xymatrix{
&(\psi^\epsilon \mu_{g'} \psi^{-1})
(\psi \mu_{h'})(\mu_{k'}\psi^\delta) 
\ar[rr]^{\theta_{g, h}} 
\ar[rd]_{\eta_{\psi}} \ar[ddd]_{\theta_{h, k}=\nu_{h', k'}}
& &(\psi^\epsilon \mu_{g'h'})(\mu_{k'}\psi^{\delta}) 
\ar[ddd]^{\theta_{gh, k}=\nu_{g'h', k'}} \\
& &(\psi^\epsilon \mu_{g'})\mu_{h'}(\mu_{k'}\psi^{\delta}) 
\ar[ru]_{\nu_{g', k'}} \ar[d]_{\nu_{h', k'}}
& \\
& &(\psi^\epsilon \mu_{g'})(\mu_{h'k'}\psi^\delta) 
\ar[rd]^{\nu_{g', h'k'}}
& \\
&(\psi^\epsilon \mu_{g'} \psi^{-1})(\psi \mu_{h'k'}\psi^\delta) 
\ar[rr]_{\theta_{g, hk}} \ar[ru]^{\eta_{\psi}}
& &\psi^{\epsilon}\mu_{g'h'k'}\psi^\delta.
}
\]
The upper and the lower triangles are commutative 
by the definitions of $\theta_{g, h}$ and $\theta_{g, hk}$; 
the left square is commutative by Lemma \ref{lem:commNTs}; 
the right square is commutative 
by the commutativity of (\ref{eq:x-Loccomm}). 
Hence we conclude that the whole diagram commutes as required. 
\end{proof}

Next we shall extend the local system $\fL$ of categories 
to a perverse schober on $\bP^1$. 
Let us take a smooth hyperplane section $C \in |\mcO_X(1)|$, 
and denote by $i \colon C \hookrightarrow X$ the natural inclusion. 
We also take a general linear section 
$j \colon \bC^{n+1} \hookrightarrow \bC^{n+2}$. 
Note that by using the equivalences (see (\ref{degree shift3}))
\begin{align*}
&\HMF(W) \simeq 
D^{\abs}\left[\fact_{\bC^*}(\bC^{n+2}, \chi_{n+2}, W) \right], \\
&\HMF(W|_{\bC^{n+1}}) \simeq 
D^{\abs}\left[
\fact_{\bC^*}(\bC^{n+1}, \chi_{n+2}, W|_{\bC^{n+1}}) 
\right], 
\end{align*}
we have the natural push-forward functor 
$j_* \colon \HMF(W|_{\bC^{n+1}}) \to 
    \HMF(W)$. 

\begin{thm} \label{thm:schober1}
There exists a perverse schober $\fP$ on 
$(\bP^1, \{0, 1, \infty \})$ 
with the following properties: 
\begin{enumerate}
    \item The restriction of the schober $\fP$ to 
    $\bP^1 \setminus \{0, 1, \infty \}$ 
    coincides with the local system $\fL$ 
    of categories constructed in Proposition \ref{prop:Loc}, 
    \item Restricting the schober $\fP$ 
    to small disks around $\infty, 1, 0$, 
    we obtain the spherical functors and the signs 
    \begin{equation} \label{eq:sph}
    \begin{aligned}
    &i_* \colon D^b(C) \to D^b(X), \quad 
    \epsilon_\infty=1, \\
    &D^b(\pt) \to D^b(X), \quad \bC \mapsto \mcO_X, 
    \quad \epsilon_1=1,  \\
    &\psi^{-1} \circ j_* \colon 
    \HMF(W|_{\bC^{n+1}}) \to 
    \HMF(W) \xrightarrow{\sim} D^b(X), \quad 
    \epsilon_0=-1, 
    \end{aligned}
    \end{equation}
    respectively. 
\end{enumerate}
\end{thm}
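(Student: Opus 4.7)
The plan is to iterate Proposition \ref{prop:extension} three times, adding the punctures $\infty$, $1$, and $0$ in this order to the local system $\fL$ of Proposition \ref{prop:Loc}. Property (1) of the theorem is automatic from Proposition \ref{prop:extension}, while property (2) holds by the choices made at each extension step, provided the monodromy around each newly added puncture is exhibited as the twist (for $\epsilon=1$) or the dual twist (for $\epsilon=-1$) of the spherical functor prescribed in (\ref{eq:sph}).

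For $\infty$, the monodromy is $(-)\otimes\mcO_X(1)$. For a smooth hyperplane section $i\colon C\hookrightarrow X$, the pushforward $i_*\colon D^b(C)\to D^b(X)$ is a classical example of a spherical functor (sphericality uses the Calabi--Yau property of $X$), and a standard projection-formula computation based on the short exact sequence $0\to\mcO_X(-1)\to\mcO_X\to i_*\mcO_C\to 0$ identifies its twist with $(-)\otimes\mcO_X(1)$. Setting $\epsilon_\infty=1$ and applying Proposition \ref{prop:extension} yields a schober on $\bP^1\setminus\{0,1\}$. For $1$, the monodromy is $\ST_{\mcO_X}$, which is by definition the twist of the spherical functor $D^b(\pt)\to D^b(X)$, $\bC\mapsto\mcO_X$ (spherical since $X$ is Calabi--Yau); with $\epsilon_1=1$, we extend to a schober on $\bP^1\setminus\{0\}$.

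For $0$, the relation among simple loops in $\pi_1(\bP^1\setminus\{0,1,\infty\})$ forces the remaining monodromy to be a conjugate of $(\ST_{\mcO_X}\circ(-\otimes\mcO_X(1)))^{-1}$. By diagram (\ref{eq:Orlov1}), Orlov's equivalence $\psi$ intertwines $\ST_{\mcO_X}\circ(-\otimes\mcO_X(1))$ with the degree-shift autoequivalence $\tau$ on $\HMF(W)$, so on the $\HMF(W)$ side the monodromy becomes $\tau^{-1}$. By Remark \ref{rmk:dual=inverse}, to realize this as a dual twist with $\epsilon_0=-1$ it suffices to exhibit $\tau$ itself as the ordinary twist of a spherical functor. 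The proposal is to use $j_*\colon\HMF(W|_{\bC^{n+1}})\to\HMF(W)$; composing with $\psi^{-1}$ then produces the spherical functor in (\ref{eq:sph}), and Proposition \ref{prop:extension} completes the extension to $\bP^1$.

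The main obstacle is establishing that $j_*$ is spherical with twist $\tau$. Through the equivalence (\ref{degree shift3}), $j$ is a $\bC^*$-equivariant linear closed immersion of gauged Landau--Ginzburg models. I would compute the right adjoint $j^!$ in the factorization category using Grothendieck duality, identify $j_*j^!$ via the Koszul factorization (Lemma \ref{lem:Koszul}) associated with the linear form cutting out $j$, and trace the character shift $\chi_1$ appearing in the Koszul differential through (\ref{degree shift2}) to match the cone of $j_*j^!\to\id$ with $\tau$. Kn\"orrer periodicity (Theorem \ref{thm:Knorrer}) may be invoked to reduce the computation to a classical divisor--pushforward in $D^b$, but the careful graded bookkeeping in the factorization setting is where the argument requires the most attention.
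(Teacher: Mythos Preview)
Your proposal is correct and follows essentially the same approach as the paper: apply Proposition~\ref{prop:extension} three times, with the substantive work being the verification that $j_*$ is spherical with twist $\tau$, carried out by identifying $j^!\simeq j^*(-)\otimes\mcO(\chi_1)[-1]$ and recognizing the twist triangle as the tensor product with the Koszul-type sequence $\mcO(\chi_1)|_{\bC^{n+1}}[-1]\to\mcO\to\mcO(\chi_1)$. The only point you underemphasize is that sphericality requires \emph{two} of the four conditions in the Anno--Logvinenko criterion; the paper checks, in addition to the twist being $\tau$, that the cotwist is the shifted Serre functor $S_{\HMF(W|_{\bC^{n+1}})}[-n-1]$, and the Kn\"orrer reduction you mention is not needed.
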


\begin{rmk}
Our choices of spherical functors (\ref{eq:sph}) 
are compatible with the general result 
in Theorem \ref{thm:compsph}. 
Indeed, by \cite{orl09}, we have an SOD 
\[
\HMF(W|_{\bC^{n+1}})=\langle 
D^b(C), D^b(\pt)
\rangle, 
\]
and the dual twist of the spherical functor $\psi^{-1} \circ j_*$ 
is exactly the composition 
$(- \otimes \mcO_X(1))^{-1} \circ \ST_{\mcO_X}^{-1}$. 

Our point is that we have explicit descriptions of 
the source categories of spherical functors 
in terms of derived categories and 
matrix factorization categories, 
rather than abstract dg categories. 
\end{rmk}

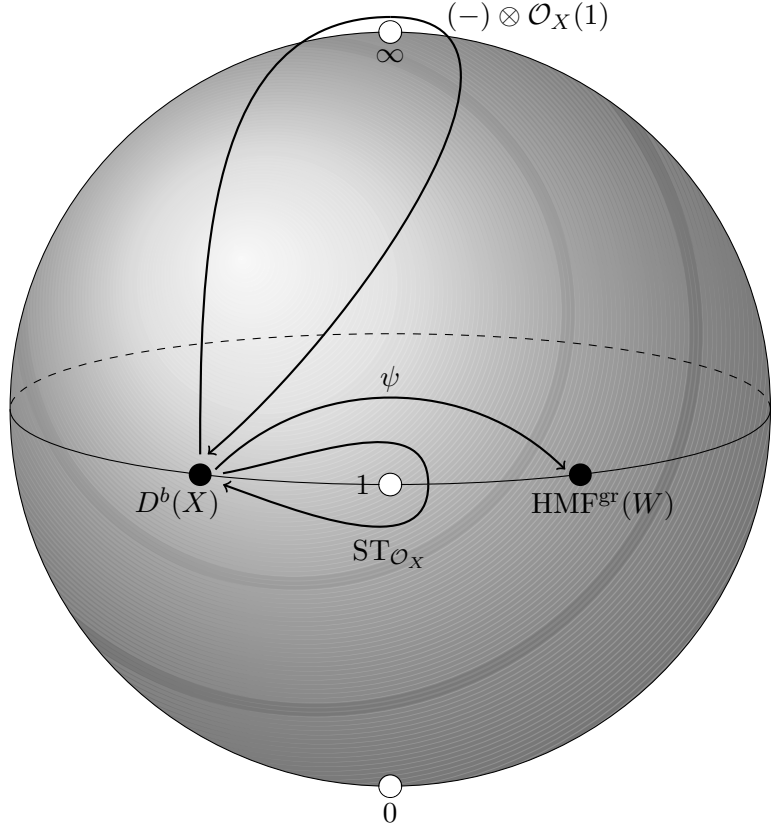
\begin{figure}[htbp]
    \centering
    \begin{tikzpicture}
  \shade[ball color = gray!40, opacity = 0.4] (0,0) circle (5cm);
  \draw (0,0) circle (5cm);
  \draw (-5,0) arc (180:360:5 and 1.0);
  \draw[dashed] (5,0) arc (0:180:5 and 1.0);
  \draw (0, -1) circle (0.15cm) node[left]{$1~$}; 
  \fill[fill=white] (0, -1) circle (0.14cm); 
  \draw (0, 5) circle (0.15cm); 
  \draw (0, 4.9) node[below]{$\infty$}; 
  \fill[fill=white] (0, 5) circle (0.14cm); 
  \draw (0, -5) circle (0.15cm); 
  \fill[fill=white] (0, -5) circle (0.14cm); 
  \draw (0, -5.35) node{$0$}; 
  \fill[fill=black] (-2.5, -0.87) circle (0.15cm); 
  \fill[fill=black] (2.5, -0.87) circle (0.15cm); 
  \draw[thick, ->] (-2.3, -0.8) to 
  [out=45, in=135] (2.3, -0.8); 
  \draw[thick] (-2.5, -0.6) to 
  [out=90, in=180] (0, 5.2); 
  \draw[thick, ->] (0, 5.2) to 
  [out=0, in=40] (-2.4, -0.6); 
  \draw[thick] (-2.2, -0.85) to 
  [out=10, in=90] (0.5, -1); 
  \draw[thick, ->] (0.5, -1) to 
  [out=270, in=340] (-2.2, -1); 
  \draw (0, 0.1) node[above]{$\psi$}; 
  \draw (-2.8, -0.9) node[below]{$D^b(X)$}; 
  \draw (2.8, -0.95) node[below]{$\HMF(W)$}; 
  \draw (0.6, 5.2) node[right]{$(-) \otimes \mcO_X(1)$}; 
  \draw (0, -1.6) node[below]{$\ST_{\mcO_X}$}; 
\end{tikzpicture}
    \caption{Local system of categories.}
    \label{fig:groupoid}
\end{figure}

\begin{proof}[Proof of Theorem \ref{thm:schober1}]
We apply Proposition \ref{prop:extension} 
to the local system $\fL$ three times. 
It is enough to show that the functors (\ref{eq:sph}) are 
spherical and their twists coincide with the autoequivalences 
\[
(-) \otimes \mcO_X(1), \quad \ST_{\mcO_X}, \quad 
\ST_{\mcO_X} \circ ((-) \otimes \mcO_X(1)), 
\] 
respectively (cf. Remark \ref{rmk:dual=inverse} and 
Example \ref{ex:trivialmonod}). 

We only prove the assertion for the functor 
\begin{equation} \label{eq:pushMF}
\psi^{-1} \circ j_* \colon 
\HMF(W|_{\bC^{n+1}}) \to D^b(X), 
\end{equation}
namely, we show that it lifts to a spherical dg functor 
and the corresponding twist autoequivalence of $D^b(X)$ 
is isomorphic to 
$\ST_{\mcO_X} \circ (- \otimes \mcO_X(1))$. 
We take dg enhancements of the categories 
$\HMF(W)$ and $\HMF(W|_{\bC^{n+1}})$ 
to be 
$D^{\abs}\fact_{\bC^*}(\bC^{n+2}, \chi_{n+2}, W)$ and 
$D^{\abs}\fact_{\bC^*}(\bC^{n+1}, \chi_{n+2}, W|_{\bC^{n+1}})$, 
respectively. 
Note that they also give Morita enhancements, 
since the categories $D^b(X)$ and 
$\HMF(W|_{\bC^{n+1}})$ are idempotent complete. 
Since $j \colon \bC^{n+1} \hookrightarrow \bC^{n+2}$ 
is a closed embedding, 
we have the functor 
\[
j_* \colon 
D^{\abs}\fact_{\bC^*}(\bC^{n+1}, \chi_{n+2}, W|_{\bC^{n+1}}) \to 
D^{\abs}\fact_{\bC^*}(\bC^{n+2}, \chi_{n+2}, W), 
\]
which gives a dg enhancement of the functor (\ref{eq:pushMF}). 

Next we construct dg lifts of the left and right adjoint functors 
\[
j^* \circ \psi,~ j^! \circ \psi \colon 
D^b(X) \to \HMF(W|_{\bC^{n+1}}). 
\]
For this we take dg enhancements of 
$\HMF(W)$ and 
$\HMF(W|_{\bC^{n+1}})$ as 
$\vect_{\bC^*}(\bC^{n+2}, \chi_{n+2}, W)$ and 
$\vect_{\bC^*}(\bC^{n+1}, \chi_{n+2}, W|_{\bC^{n+1}})$, 
respectively. 
Then we have the dg enhancement 
\[
j^* \colon \vect_{\bC^*}(\bC^{n+2}, \chi_{n+2}, W) 
\to \vect_{\bC^*}(\bC^{n+1}, \chi_{n+2}, W|_{\bC^{n+1}}). 
\]
Similarly, we have the enhancement of the functor 
$j^!=j^*(-) \otimes \mcO(\chi_1)[-1]$, 
where $\mcO(\chi_1)$ is 
the $\bC^*$-equivariant line bundle of weight $1$. 
Note that the dg enhancements 
$D^{\abs}\fact(-)$ and $\vect(-)$ are quasi-equivalent 
(see Proposition \ref{prop:quasi-eq}).

Let us consider the twist $T$ of $j_*$, 
which fits into the exact triangle 
\begin{equation} \label{eq:trisph}
j_* \circ j^! \to \id \to T
\end{equation}
of quasi-functors on 
$D^{\abs}\fact_{\bC^*}(\bC^{n+2}, \chi_{n+2}, W)$. 
Applying the underlying exact functors of (\ref{eq:trisph}) 
to an object $E \in \HMF(W)$, 
we obtain a triangle 
\begin{equation} \label{eq:degtri}
E(\chi_1)|_{\bC^{n+1}}[-1] \to E \to T(E)
\end{equation}
in $\HMF(W)$. 
The exact triangle (\ref{eq:degtri}) 
is isomorphic to the triangle 
obtained by tensoring $E$ to the sequence 
$\mcO(\chi_1)|_{\bC^{n+1}}[-1] \to \mcO \to \mcO(\chi_1)$. 
In other words, we have isomorphisms 
\begin{equation} \label{eq:degisom}
[T] \simeq (- \otimes \mcO(\chi_1)) \simeq \tau 
\end{equation}
between functors. 
In particular, the functor $[T]$ is an autoequivalence 
of $\HMF(W)$. 
Similarly, we can check that 
the endofunctor $[F]$ of 
$\HMF(W|_{\bC^{n+1}})$ 
is an equivalence, 
where $F$ is the cotwist of $j_*$ 
defined by the triangle 
\[
F \to \id \to j^! \circ j_*. 
\]
Indeed, we can show that 
\begin{equation} \label{eq:Serren+1}
    [F] \simeq S_{\HMF(W|_{\bC^{n+1}})}[-n-1], 
\end{equation}
where $S_{(-)}$ denotes the Serre functor. 

The above arguments show that 
the push-forward functor $j_*$ is a spherical functor. 
Moreover, its twist is isomorphic to the degree shift functor 
by (\ref{eq:degisom}). 
Via the equivalence 
$\psi \colon D^b(X) \simeq \HMF(W)$, 
the degree shift functor $\tau$ corresponds to 
the autoequivalence $\ST_{\mcO_X} \circ (- \otimes \mcO_X(1))$ 
as required (see Theorem \ref{thm:Orlov1}). 
\end{proof}

\begin{prop} \label{prop:CYschober}
The schober $\fP$ constructed in Theorem \ref{thm:schober1} 
is $n$-Calabi-Yau. 
\end{prop}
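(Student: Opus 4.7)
The plan is to verify the two conditions of Definition \ref{def:CYschober}(2): the restriction of $\fP$ to $\bP^1 \setminus \{0,1,\infty\}$ is a local system of $n$-CY categories with monodromies preserving the CY structure, and for each of the three critical points the restriction of $\fP$ to a small disk satisfies the conditions of Definition \ref{def:CYschober}(1).

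First I would dispense with the local system part. The fiber $D^b(X)$ is $n$-CY because $X$ is a smooth projective Calabi--Yau $n$-fold, so its Serre functor is the shift $[n]$. Since every autoequivalence of a triangulated category commutes with shifts, the three monodromies $(-)\otimes\mcO_X(1)$, $\ST_{\mcO_X}$, and their composition automatically preserve the CY structure. The condition that $\mcD = D^b(X)$ is $n$-CY in Definition \ref{def:CYschober}(1)(a) is verified for the same reason at each of the three points.

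The remaining task is the Serre-functor condition $f[n+1] \simeq S_{\mcD_1}$ at each critical point. At $\infty$, the spherical functor is $i_* \colon D^b(C) \to D^b(X)$ with right adjoint $i^! = i^*(-)\otimes N_{C/X}[-1] = i^*(-)\otimes\mcO_C(1)[-1]$. Using the Koszul resolution $[\mcO_X(-C)\to\mcO_X]$ of $i_*\mcO_C$, one computes $Li^* i_*\mcF \simeq \mcF \oplus \mcF\otimes\mcO_C(-1)[1]$, whence $i^! i_*\mcF \simeq \mcF \oplus \mcF\otimes\mcO_C(1)[-1]$. The cotwist triangle $f\to\id\to i^!i_*$ then yields $f \simeq (-)\otimes\mcO_C(1)[-2]$. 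Since $K_C = (K_X+C)|_C = \mcO_C(1)$ by adjunction and $\dim C = n-1$, the Serre functor of $D^b(C)$ is $(-)\otimes\mcO_C(1)[n-1]$, which equals $f[n+1]$. At $1$, for the spherical functor $V\mapsto V\otimes\mcO_X$ with right adjoint $R\Gamma$, Lefschetz together with the Calabi--Yau property gives $R\Gamma(\mcO_X) \simeq \bC\oplus\bC[-n]$, so the cotwist is $f(V) \simeq V[-n-1]$, and $f[n+1] \simeq \id = S_{D^b(\pt)}$. At $0$, the cotwist of $\psi^{-1}\circ j_*$ agrees with the cotwist $F$ of $j_*$ up to the equivalence $\psi$, and in the proof of Theorem \ref{thm:schober1} we established the isomorphism $[F]\simeq S_{\HMF(W|_{\bC^{n+1}})}[-n-1]$ in \eqref{eq:Serren+1}, so $F[n+1] \simeq S_{\HMF(W|_{\bC^{n+1}})}$ as needed.

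The main obstacle is the cotwist computation at $\infty$: one must correctly identify the derived pullback of the structure sheaf of the divisor, combine it with the formula for $i^!$ via $N_{C/X}=\mcO_C(1)$, and check that the unit morphism $\id\to i^!i_*$ splits off the first summand so that the cone is exactly $(-)\otimes\mcO_C(1)[-1]$. Once this is in place, comparison with the adjunction formula $K_C = \mcO_C(1)$ matches $f[n+1]$ with the Serre functor of $D^b(C)$ on the nose, and the other two critical points require only the cohomology computation $R\Gamma(\mcO_X) = \bC\oplus\bC[-n]$ and a direct appeal to \eqref{eq:Serren+1}.
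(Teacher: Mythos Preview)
Your approach matches the paper's: it verifies the two conditions of Definition~\ref{def:CYschober} directly, citing \eqref{eq:Serren+1} for the point $0$ and leaving the points $\infty,1$ to a routine check that you carry out explicitly.

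One correction in your computation at $\infty$: for a divisor embedding $i\colon C\hookrightarrow X$ the object $Li^*i_*\mcF$ sits in an exact triangle
\[
\mcF\otimes\mcO_C(-1)[1]\longrightarrow Li^*i_*\mcF\longrightarrow\mcF,
\]
but this is \emph{not} a direct sum in general. After twisting by $N_{C/X}[-1]=\mcO_C(1)[-1]$ the triangle becomes $\mcF\to i^!i_*\mcF\to\mcF\otimes\mcO_C(1)[-1]$, and the first arrow is the adjunction unit; its cone is therefore $\mcF\otimes\mcO_C(1)[-1]$ whether or not anything splits, so your formula $f\simeq(-)\otimes\mcO_C(1)[-2]$ is correct. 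Only the intermediate direct-sum claim should be replaced by the triangle argument.
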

\begin{proof}
The local system $\fL$ clearly satisfies 
the Calabi-Yau property. 

By the isomorphism (\ref{eq:Serren+1}), 
the schober $\fP$ restricted to a disk around $0$ 
is $n$-Calabi-Yau. 
Similarly, one can check that 
it is $n$-Calabi-Yau around the points $\infty, 1$. 
\end{proof}

\section{Decategorification} \label{sec:decat}
In this section, we prove that 
the decategorifications of our perverse schobers 
coincide with the intersection complexes.

\subsection{Intersection complex}
We first recall the description of intersection complexes 
under the equivalence in Proposition \ref{prop:quiver}: 

\begin{prop}[{\cite[Proposition 3.2]{don19a}}] \label{prop:ICcrite}
Given a local system $L$ on $\Delta \setminus \{0\}$ 
with fiber $F$ and monodromy $m$, 
the intersection complex $\IC(L) \in \Perv(\Delta, 0)$ corresponds to 
the diagram 
\begin{equation} \label{eq:ICdiag}
\xymatrix{
&F/F^m \ar@<1ex>[r]^-{\id-m} &F \ar@<1ex>[l]^-u, 
}
\end{equation}
where $F^m \subset F$ denotes the $m$-invariant part, 
and $u \colon F \to F/F^m$ denotes the quotient map.  
\end{prop}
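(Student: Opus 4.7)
The plan is to characterize $\IC(L)$ inside the quiver category $\mcP_1$ of Definition~\ref{def:catPn} and read off the data directly. The starting observation is that under the equivalence $F_K$ of Proposition~\ref{prop:quiver}, the vector space $D$ in a quadruple $(D, D_0, u, v)$ is identified with the nearby cycles of the corresponding perverse sheaf, and the monodromy automorphism $T_0 = \id - v\circ u$ on $D$ is identified with the monodromy of the restricted local system on $\Delta \setminus \{0\}$. Consequently, every perverse extension of $L$ to $\Delta$ must have $D = F$ and satisfy $v u = \id - m$.

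Next, I would invoke Beilinson's gluing description of the intermediate extension: among all perverse extensions of $L$, the intersection complex $\IC(L) = j_{!*}L$ is the unique one that admits no nonzero sub-object or quotient object supported at $\{0\}$. Translating this condition through $F_K$, it corresponds precisely to the canonical map $u$ being surjective and the variation map $v$ being injective. (One verifies this translation by inspecting the quiver data for the two standard extensions $j_!L$ and $j_*L$ and observing that $\IC(L)$ is the image of the canonical morphism between them.)

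Combining the two constraints then forces the quiver uniquely. Injectivity of $v$ together with $vu = \id - m$ yields $\ker u = \ker(\id - m) = F^m$, and surjectivity of $u$ gives a canonical identification $D_0 \cong F/F^m$ under which $u$ becomes the quotient map. The arrow $v \colon F/F^m \to F$ is then the injection induced by $\id - m$, exactly as in the diagram \eqref{eq:ICdiag}.

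The main obstacle I anticipate is reconciling sign and normalization conventions between \cite{bei87, ggm85, gmv96} and Definition~\ref{def:catPn}, in particular whether the relation $T_0 = \id - vu$ or $T_0 = \id + vu$ is in force and which interpretation of $D_0$ (vanishing cycles versus a specific quotient of $D$) is implemented in $F_K$. Once these conventions are pinned down in accordance with Definition~\ref{def:catPn}, the rest of the argument is formal.
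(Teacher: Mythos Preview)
The paper does not supply its own proof of this proposition; it is cited verbatim from \cite[Proposition~3.2]{don19a}, with only a remark (immediately following the statement) addressing the sign convention $T = \id - vu$ used in Definition~\ref{def:catPn} versus Donovan's $T = \id + vu$, which is exactly the obstacle you anticipate in your final paragraph. Your argument---identifying $D$ with the nearby cycles so that $D = F$ and $vu = \id - m$ for any perverse extension of $L$, then invoking the characterization of $j_{!*}L$ as the unique extension with no nonzero sub- or quotient object supported at the origin, and translating this into $v$ injective and $u$ surjective---is the standard one and is correct. There is nothing in the paper itself to compare it against.
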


\begin{rmk}
In \cite{don19a}, Donovan uses a slightly different version 
of the quiver description of the category $\Perv(\Delta, 0)$. 
Namely, our category $\mcP_1$ is the category of data 
$(D, D_1, u, v)$ such that $\id-v \circ u$ is an isomorphism. 
On the other hand, Donovan \cite[Proposition 3.1]{don19a} 
considers the condition that $\id+v \circ u$ is an isomorphism. 
As a result, the map $F/F^m \to F$ in (\ref{eq:ICdiag}) 
is $\id-m$ in our case, 
while it is $m-\id$ in \cite[Proposition 3.2]{don19a}. 
\end{rmk}

The following lemma is useful for our purpose: 
\begin{lem} \label{lem:IClocal}
Let $X$ be a smooth projective variety, 
$U \subset X$ be an open subset, 
and $L$ be a local system on $U$. 

Given a perverse sheaf $P$ on $X$, 
the following conditions are equivalent: 
\begin{enumerate}
\item We have an isomorphism 
$P \simeq \IC(L)$. 

\item There exists an analytic open covering $X=\cup_i V_i$ 
such that for every $i$, we have an isomorphism 
$P|_{V_i} \simeq \IC(L|_{V_i \cap U}) \in \Perv(V_i)$. 
\end{enumerate}
\end{lem}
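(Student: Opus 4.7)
The plan is to exploit the fact that the intersection complex is characterized by purely local conditions, so the equivalence reduces to the locality of these conditions under restriction to an analytic open covering.

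For (1) $\Rightarrow$ (2), the key observation is that $\IC$ commutes with restriction to an analytic open subset: for every open $V \subset X$, there is a canonical isomorphism
\[
\IC(L)|_V \simeq \IC(L|_{V \cap U}).
\]
This follows from the construction of $\IC(L)$ as the image (in the perverse $t$-structure) of the canonical morphism $j_! L[\dim U] \to j_* L[\dim U]$, where $j \colon U \hookrightarrow X$ is the inclusion, together with the fact that $j_!$, $j_*$ for open immersions, the perverse truncation, and the formation of images, all commute with restriction to open subsets. Hence (2) holds for the trivial covering $V_1 = X$, or for any other covering.

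For (2) $\Rightarrow$ (1), I would invoke the characterization of $\IC(L)$ as the unique perverse extension of $L[\dim U]$ satisfying the support and cosupport conditions
\[
\mathcal{H}^k(i_S^* P) = 0 \quad (k \geq -\dim S), \qquad
\mathcal{H}^k(i_S^! P) = 0 \quad (k \leq -\dim S),
\]
for each stratum $S$ of $X \setminus U$ with inclusion $i_S \colon S \hookrightarrow X$. Both the identification $P|_U \simeq L[\dim U]$ and the vanishing of stalks/costalks at a point $x$ can be checked in an arbitrarily small analytic neighborhood of $x$. Given the hypothesis of (2), for every $x \in X$ we can choose some $V_i \ni x$ on which $P|_{V_i} \simeq \IC(L|_{V_i \cap U})$, so these local conditions automatically hold there. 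Since they hold at every point of $X$, the perverse sheaf $P$ satisfies the defining properties of $\IC(L)$, and the uniqueness of the intersection complex gives $P \simeq \IC(L)$.

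The main technical point is ensuring that the IC characterization is genuinely local, which reduces to the fact that $j_!, j_*$ for the open inclusion $j \colon U \hookrightarrow X$, and the perverse $t$-structure itself, are compatible with analytic open restriction. This is standard (cf.\ \cite{bei87}), but one must be careful in case some $V_i$ does not meet $U$: then $\IC(L|_{V_i \cap U}) = 0$, and the hypothesis simply asserts $P|_{V_i} = 0$, which correctly forces $P$ to be supported on the closure of $U$. Beyond this bookkeeping, no deep input is required — the lemma is essentially a restatement of the fact that $\IC$ is determined by local data.
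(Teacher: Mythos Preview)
Your approach is the same as the paper's: reduce to the local characterization of $\IC$ via (co)support conditions. The paper's proof is a two-line sketch citing this characterization together with the fact that perverse sheaves form a stack in the analytic topology; you spell out the argument in more detail but do not invoke the stack property.

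There is, however, a genuine gap in your direction (2) $\Rightarrow$ (1). You write that ``the identification $P|_U \simeq L[\dim U]$ \dots can be checked in an arbitrarily small analytic neighborhood of $x$,'' but this is not true: two local systems can be locally isomorphic at every point without being globally isomorphic. Concretely, take $X$ an elliptic curve, $U = X$, and two non-isomorphic rank-one local systems $L, L'$; covering $X$ by contractible opens, $P = L'[1]$ satisfies condition (2) relative to $L$, yet $P \not\simeq \IC(L)$. Thus the lemma, read literally, needs an extra hypothesis---either that $P|_U \simeq L[\dim U]$ is already given, or that the local isomorphisms in (2) agree on overlaps so that the stack property (which the paper does invoke) can be applied to glue them. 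In the paper's application (Theorem~\ref{thm:decatIC}) this causes no trouble because $P|_U \simeq L$ holds by construction of $P$, but as a general argument your proof of (2) $\Rightarrow$ (1) does not go through, and the (co)support conditions alone only show $P \simeq \IC(P|_U[-\dim U])$ rather than $P \simeq \IC(L)$.
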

\begin{proof}
This is a well-known property, 
and follows from the characterization of 
the intersection complexes given in 
\cite[Definition 8.4.3 (2)]{max19}, 
together with the fact that 
the category of perverse sheaves forms 
a stack in analytic topology, 
see e.g., \cite[Remark 8.2.11]{max19}. 
\end{proof}

\subsection{Decategorification of perverse schobers}
In this subsection, we consider the decategorification 
of our schober when $n \geq 2$. 
Let $H \in |\mcO_{\bP^{n+2}}(1)|$ be the hyperplane class. 
We also denote by $H$ its restrictions to $X$ and $C$. 

\begin{defin}
\begin{enumerate}
    \item We define the {\it $H$-part} 
    $\Lambda_H(X)$ (resp. $\Lambda_H(C)$) 
    of the cohomology $H^*(X, \bQ)$ (resp. $H^*(C, \bQ)$) 
    to be the subring generated by the hyperplane class $H$. 
    
    \item We define the {\it H-part} 
    $\Lambda_H(W|_{\bC^{n+1}})$ 
    of the numerical Grothendieck group 
    $K_{\num}\left(\HMF(W|_{\bC^{n+1}}) \right)_\bQ$ 
    to be $\Lambda_H(C) \oplus \bQ$. 
\end{enumerate}
\end{defin}

\begin{rmk}
By \cite{orl09}, we have an SOD 
\[
\HMF(W|_{\bC^{n+1}})=\langle
D^b(C), D^b(\pt)
\rangle. 
\]
Hence we have a natural embedding 
\[
\Lambda_H(W|_{\bC^{n+1}}) \hookrightarrow 
K_{\num}\left(\HMF(W|_{\bC^{n+1}}) \right)_\bQ. 
\]
\end{rmk}

The proof of the following lemma is straightforward: 
\begin{def-lem} \label{lem:decat}
The following assertions hold: 
\begin{enumerate}
\item The local system $\fL$ of categories 
in Proposition \ref{prop:Loc} induces 
a local system on $\bP^1 \setminus \{0, 1, \infty\}$ 
whose fiber is $\Lambda_H(X)$. 
We denote it by $L$. 

\item The spherical functors with the signs in (\ref{eq:sph}) 
induce linear maps 
\begin{equation} \label{eq:decatsph}
\begin{aligned}
&i_* \colon \Lambda_H(C) \to \Lambda_H(X), 
\quad i^! \colon \Lambda_H(X) \to \Lambda_H(C), \\
&\bQ \to \Lambda_H(X), 
\quad \Lambda_H(X) \to \bQ, \\
&j_* \colon \Lambda_H(W|_{\bC^{n+1}}) 
\to \Lambda_H(X), \quad 
j^* \colon \Lambda_H(X) \to 
\Lambda_H(W|_{\bC^{n+1}}), 
\end{aligned}
\end{equation}
which define perverse sheaves on $(\Delta, 0)$. 

\item The local system $L$ in (1) and 
the data (\ref{eq:decatsph}) define 
a perverse sheaf $P$ on $\bP^1$. 
\end{enumerate}

We call $L, P$ the {\it decategorifications} of $\fL, \fP$, 
respectively. 
\end{def-lem}

\begin{thm} \label{thm:decatIC}
Suppose that $n \geq 2$. 
Let $L, P$ be the decategorifications of $\fL, \fP$ 
as in Definition-Lemma \ref{lem:decat}. 

Then we have an isomorphism 
$P \simeq \IC(L) \in \Perv(\bP^1)$. 
Moreover, the perverse sheaf $P$ is Verdier self-dual. 
\end{thm}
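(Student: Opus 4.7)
The plan is to verify $P \simeq \IC(L)$ locally on $\bP^1$ via Lemma \ref{lem:IClocal}, and to deduce Verdier self-duality from the $n$-Calabi--Yau property of $\fP$ established in Proposition \ref{prop:CYschober}. Cover $\bP^1$ by $\bP^1 \setminus \{0,1,\infty\}$ together with small analytic disks $V_0, V_1, V_\infty$ around the three punctures; away from the punctures both $P$ and $\IC(L)$ restrict to $L$. Around each puncture $b$, Proposition \ref{prop:ICcrite} identifies $\IC(L|_{V_b \setminus \{b\}})$ with the quiver datum $(F, F/F^{m_b}, \mathrm{quot}, \id - m_b)$, where $F = \Lambda_H(X)$ and $m_b$ is the local monodromy, while Definition-Lemma \ref{lem:decat} exhibits $P|_{V_b}$ as the quiver datum obtained from the spherical functor at $b$. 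Since two objects of $\mcP_1$ sharing the same $D = F$ and the same monodromy $T_1 = m_b$ are isomorphic as soon as $v$ is injective with $\Image(v) = \Image(\id - m_b)$, it suffices to check this condition at each of $\infty, 1, 0$.

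At $\infty$, the projection formula gives $i_* H^k = H^{k+1}$, hence $\Image(i_*) = H \cdot \Lambda_H(X)$. As $H$ is nilpotent in $\Lambda_H(X) = \bQ[H]/(H^{n+1})$, multiplication by $e^H - 1$ also has image $H \cdot \Lambda_H(X) = \Image(\id - m_\infty)$, and injectivity follows from the dimension equality $\dim \Lambda_H(C) = n = \dim(H \cdot \Lambda_H(X))$. At $1$, the spherical functor decategorifies to $\bQ \hookrightarrow \Lambda_H(X)$, $1 \mapsto 1$, with image $\bQ \cdot 1$; the monodromy $\ST_{\mcO_X}$ acts on cohomology by $x \mapsto x - \chi(\mcO_X, x) \cdot 1$, so $\Image(\id - m_1) = \bQ \cdot 1$ provided $\chi(\mcO_X, -)$ is not identically zero on $\Lambda_H(X)$, which holds by Hirzebruch--Riemann--Roch applied to $\mcO_X(k)$ for a suitable $k$.

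At $0$, the sign $\epsilon_0 = -1$ means the relevant endomorphism is the dual twist $T'_0 = T_0^{-1}$. Orlov's SOD gives $\Lambda_H(W|_{\bC^{n+1}}) = \Lambda_H(C) \oplus \bQ$ of dimension $n+1$, and by Theorem \ref{thm:compsph} the spherical functor $\psi^{-1} \circ j_*$ is assembled from those at $\infty$ and $1$. Decomposing $v$ along the SOD reduces the image computation to the two preceding cases, and matching this against $\Image(\id - m_0)$, where $m_0$ is (up to orientation) the inverse of $m_\infty \circ m_1$, yields the claim by a dimension count. I expect this to be the main obstacle of the argument: one must carefully track how the Orlov equivalence $\psi$ interacts with the hyperplane class on each factor of the SOD so that the image of $\psi^{-1} j_*$ in $\Lambda_H(X)$ realizes $\Image(\id - m_0)$. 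The hypothesis $n \geq 2$ presumably enters here to ensure that the two autoequivalences $(-) \otimes \mcO_X(1)$ and $\ST_{\mcO_X}$ act sufficiently independently on $\Lambda_H(X)$ and that no low-dimensional accidents occur.

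Finally, Verdier self-duality of $P$ is immediate from Proposition \ref{prop:CYschober} combined with Remark \ref{rmk:Vdual}: the decategorification of an $n$-Calabi--Yau perverse schober carries an (anti-)symmetric pairing with respect to Verdier duality, and is in particular Verdier self-dual.
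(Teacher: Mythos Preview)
Your overall strategy---verify $P \simeq \IC(L)$ locally via Lemma~\ref{lem:IClocal} and Proposition~\ref{prop:ICcrite}, then deduce self-duality from Proposition~\ref{prop:CYschober}---is exactly the paper's. Your treatment of the points $\infty$ and $1$ is adequate (the paper in fact writes out only the computation at $0$ and leaves the other two as ``similarly''), and your abstract criterion ``two objects of $\mcP_1$ with the same $D$ and the same monodromy are isomorphic once $v$ is injective with $\Image(v)=\Image(\id-m)$'' is correct and a nice way to organise the check.

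The real gap is at $0$, and you say so yourself. Your plan to decompose $j_*$ along the SOD $\Lambda_H(W|_{\bC^{n+1}})=\Lambda_H(C)\oplus\bQ$ and ``reduce to the two preceding cases'' does not work cleanly: the spherical functor built from an SOD as in Theorem~\ref{thm:compsph} is not a direct sum of the two constituent functors, so neither the image of $j_*$ nor the injectivity question factors in the way you suggest. The paper avoids this entirely. It proves directly that $F^{m}=0$ by an elementary computation: writing $m^{-1}(\vec{x})=\vec{x}\cdot e^{H}-\chi(\vec{x}\cdot e^{H})(1,0,\dots,0)$ on $\Lambda_H(X)\cong\bQ[H]/(H^{n+1})$ and comparing coordinates forces $\vec{x}=0$. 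Once $F^{m}=0$, the IC datum is simply
\[
\xymatrix{ \Lambda_H(X) \ar@<1ex>[r]^-{\id-m} & \Lambda_H(X) \ar@<1ex>[l]^-{\id} },
\]
and the pair $(j^*,\id)$ is a morphism in $\mcP_1$ from this to the schober datum $(j_*,j^*)$. To see it is an isomorphism it suffices that $j^*$ is bijective; injectivity of $j_*j^*=\id-m$ gives injectivity of $j^*$, and bijectivity follows from the dimension count $\dim\Lambda_H(W|_{\bC^{n+1}})=n+1=\dim\Lambda_H(X)$. This is precisely where $n\geq 2$ enters: it guarantees that $C$ is a connected smooth variety, so $\dim\Lambda_H(C)=n$ and the dimensions match. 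Your speculation about ``independence of autoequivalences'' is not the actual mechanism.

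Your argument for Verdier self-duality is the same as the paper's.
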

\begin{proof}
By Lemma \ref{lem:IClocal}, 
it is enough to check the isomorphism of the perverse sheaves 
$\IC(L)$ and $P$ in the neighborhoods 
of the points $0, 1, \infty$. 
To show this, we apply Proposition \ref{prop:ICcrite} 
to the data in (\ref{eq:decatsph}). 

We only prove the assertion for 
\begin{equation} \label{eq:sphtau}
\xymatrix{
&\Lambda_H(W|_{\bC^{n+1}}) \ar@<1ex>[r]^-{j_*} 
&\Lambda_H(X) \ar@<1ex>[l]^-{j^*} 
}, 
\end{equation}
namely, we prove that this is isomorphic to 
the diagram (\ref{eq:ICdiag}) with 
$F \coloneqq \Lambda_H(X)$ and 
$m \coloneqq 
(-\otimes \mcO_X(1))^{-1} \circ \ST_{\mcO_X}^{-1}$. 
We claim that $F^m=0$ in this case, 
and hence the diagram (\ref{eq:ICdiag}) becomes 
\begin{equation} \label{eq:ICtau}
\xymatrix{
&\Lambda_H(X) \ar@<1ex>[r]^-{\id-m} 
&\Lambda_H(X) \ar@<1ex>[l]^-\id. 
}
\end{equation}
To prove the claim, 
first note that $F^m=0$ if and only if 
$F^{(m^{-1})}=0$. 
Recall that we have the triangle 
\begin{equation} \label{eq:extau}
\dR\Hom(\mcO_X,- \otimes \mcO_X(1)) \otimes \mcO_X \to 
(-) \otimes \mcO_X(1) \to \ST_{\mcO_X}(- \otimes \mcO_X(1)). 
\end{equation}
Hence we have 
\[
m^{-1}(\vec{x})
=\vec{x}.e^H-\chi(\vec{x}.e^H)(1, 0, \cdots, 0)
\]
for any $\vec{x}=(x_0, \cdots, x_n) \in \Lambda_H(X)$. 
Assume that $\vec{x}$ is fixed by $m^{-1}$. 
Firstly, the second to the last components 
of $\vec{x}$ and $m^{-1}(\vec{x})$ 
must be equal, which is only possible when 
$x_0=x_1= \cdots =x_{n-1}=0$. 
Then the first component of $m^{-1}(\vec{x})$ is 
$-\chi(\vec{x}.e^H)=-x_n$, while we have seen $x_0=0$. 
Hence the $m$-fixed part is trivial as claimed. 

Next observe that we have the following morphism 
in the category $\mcP_1$ (cf. Definition \ref{def:catPn}): 
\begin{equation*} 
    \xymatrix{
    &\Lambda_H(X) \ar@<1ex>[r]^-{\id-m} \ar[d]_-{j^*}
    &\Lambda_H(X) \ar@<1ex>[l]^-\id \ar[d]^-\id \\
    &\Lambda_H(W|_{\bC^{n+1}}) 
    \ar@<1ex>[r]^-{j_*} 
    &\Lambda_H(X) \ar@<1ex>[l]^-{j^*}. 
    }
\end{equation*}
Hence, to prove that the diagrams 
(\ref{eq:sphtau}) and (\ref{eq:ICtau}) are isomorphic, 
it is enough to show that the map 
$j^* \colon \Lambda_H(X) \to 
\Lambda_H(\bC^{n+1}, W|_{\bC^{n+1}})$ is an isomorphism. 
Since we assume that $n \geq 2$, 
$C \subset X$ is a connected smooth variety, 
and the vector spaces 
$\Lambda_H(\bC^{n+1}, W|_{\bC^{n+1}})$ and 
$\Lambda_H(X)$ have the same dimension. 
Hence it is enough to show that 
$j_* j^*=\id-m$ is an isomorphism. 
Since we have proved 
\[
\ker(\id-m)=F^m=0, 
\]
the map $j_*j^*$ is an injective endomorphism, 
hence it is an isomorphism as required.



The Verdier self-duality of $P$ follows 
from Proposition \ref{prop:CYschober}, 
see also Remark \ref{rmk:Vdual}. 
\end{proof}

\begin{rmk}
When $n=1$, $X$ is an elliptic curve and 
$C$ consists of $3$ points. 
Then we would have 
$\dim \Lambda_H(W|_{\bC^2})=4 > 2=\dim \Lambda_H(X)$. 
Hence the above computation shows that 
$P$ and $\IC(L)$ are not isomorphic. 
This is related to the fact that $\mcO_X(1)$ has degree three, 
and hence does not generate the Picard group when $n=1$. 

We will modify the construction in the next section 
to get the categorification of the intersection complex 
for elliptic curves. 
\end{rmk}

\begin{rmk}
Even when $n \geq 2$, 
if we consider the full numerical $K$-groups 
instead of the $H$-parts, the result fails in general. 
Indeed, it may happen that 
$\dim K_{\num}(\HMF(W|_{\bC^{n+1}}))_\bQ 
> \dim K_{\num}(X)_\bQ$. 
\end{rmk}

\section{Mirror symmetry for elliptic curves} \label{sec:mirror}
In this section, we denote by $X$ a smooth elliptic curve. 
We construct perverse schobers in both $A$- and $B$-models, 
which are identified via the homological mirror symmetry 
for an elliptic curve. 

A key observation is that, in the case of elliptic curves, 
our schober only consists of spherical objects, 
while in higher dimension we have more general spherical functors.

\subsection{Perverse schober on the $B$-side}
The following is a straightforward modification 
of the construction in Theorem \ref{thm:schober1}: 

\begin{prop} \label{prop:schoberB}
Fix a point $p \in X$. 
There exists a perverse schober $\fP^B$ 
on $(\bP^1, \{0, 1, \infty\})$ satisfying the following properties: 
\begin{enumerate}
    \item The schober $\fP^B$ restricted to the open subset 
    $\bP^1 \setminus \{0, 1, \infty \}$ is isomorphic to 
    a local system $\fL^B$ of categories with the fiber $D^b(X)$, 
    defined by the assignments 
    \[
    a \mapsto (-) \otimes \mcO_X(p), \quad 
    b \mapsto \ST_{\mcO_X}.
    \]
    
    \item Restricting the schober $\fP^B$ to small disks 
    around $\infty, 1, 0$, we obtain the spherical functors 
    with the signs
    \begin{align*}
        &D^b(\pt) \to D^b(X), \quad \bC \mapsto \mcO_p, 
        \quad \epsilon_\infty=1, \\
        &D^b(\pt) \to D^b(X), \quad \bC \mapsto \mcO_X, 
        \quad \epsilon_1=1, \\
        &\langle D^b(\pt), D^b(\pt) \rangle \to D^b(X), 
        \quad \epsilon_0=-1, 
    \end{align*}
    respectively. 
    
    \item By taking the Grothendieck groups, 
    we obtain a local system $L^B$ and a perverse sheave $P^B$ 
    in the usual sense. We have $P^B \simeq \IC(L^B)$. 
\end{enumerate}
\end{prop}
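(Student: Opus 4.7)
The proof follows the pattern of Theorem \ref{thm:schober1} but is simpler because $X$ is one-dimensional. First, mimicking Lemma \ref{lem:x-Loc} and Proposition \ref{prop:Loc}, I would build the local system $\fL^B$: since $\pi_1(\bP^1 \setminus \{0,1,\infty\}, x^-)$ is a free group on two generators, the obstruction in $H^3$ with coefficients in $\bC^*$ vanishes and the assignments $a \mapsto (-) \otimes \mcO_X(p)$, $b \mapsto \ST_{\mcO_X}$ lift to a coherent action of the groupoid on $D^b(X)$. No second basepoint is needed because both generators already act on $D^b(X)$ itself.

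Next I would apply Proposition \ref{prop:extension} three times. Around $\infty$, the spherical functor $\bC \mapsto \mcO_p$ has twist $\ST_{\mcO_p}$; using that $\mcO_X(p)$ realises the unique nontrivial extension in $\Ext^1(\mcO_p, \mcO_X) \cong \bC$, a verification on the generators $\mcO_X$ and $\mcO_p$ identifies $\ST_{\mcO_p}$ with $(-) \otimes \mcO_X(p)$. Around $1$, the spherical functor $\bC \mapsto \mcO_X$ has twist $\ST_{\mcO_X}$ by definition. For the puncture $0$, I would invoke Theorem \ref{thm:compsph} on these two functors to obtain a spherical functor $\langle D^b(\pt), D^b(\pt) \rangle \to D^b(X)$ with twist $(- \otimes \mcO_X(p)) \circ \ST_{\mcO_X}$; since the product of all three monodromies on $\bP^1$ must be trivial, this is the inverse of the monodromy at $0$, which is accommodated by the sign $\epsilon_0 = -1$ through Remark \ref{rmk:dual=inverse}.

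For part (3), I would apply Lemma \ref{lem:IClocal} and check Proposition \ref{prop:ICcrite} on a small disk around each puncture. Around $\infty$ and $1$ the source of the spherical functor has rank-one $K$-group, so the verification is immediate. The delicate case is $0$: here $K_{\num}(\langle D^b(\pt), D^b(\pt) \rangle)_\bQ \cong \bQ^2$, the decategorified functor sends the two generators to $[\mcO_p]$ and $[\mcO_X]$, which form a basis of $K_{\num}(X)_\bQ$, and in this basis the endomorphism $(- \otimes \mcO_X(p)) \circ \ST_{\mcO_X}$ is represented by a matrix with characteristic polynomial $\lambda^2 - \lambda + 1$. Hence $F^m = 0$ and the argument of Theorem \ref{thm:decatIC} goes through verbatim. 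The principal obstacle, and the reason for passing from a hyperplane section $C$ (of degree $3$ for an elliptic curve, which would give a source of rank $4$) to a single point $p$ of degree $1$, is exactly this rank-matching and fixed-point computation at $0$.
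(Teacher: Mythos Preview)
Your proposal is correct and follows essentially the same route as the paper: the paper's own proof simply says the argument is ``almost identical'' to Theorem~\ref{thm:schober1}, notes that $\mcO_p$ is spherical with twist $(-)\otimes\mcO_X(p)$, and invokes Theorem~\ref{thm:compsph} for the puncture at $0$. You supply strictly more detail than the paper does, in particular the explicit verification of part~(3) via the characteristic polynomial $\lambda^2-\lambda+1$, which the paper leaves entirely implicit.

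One small point of consistency: the paper records the dual twist at $0$ as $(-\otimes\mcO_X(p))^{-1}\circ\ST_{\mcO_X}^{-1}$, i.e.\ the twist is $\ST_{\mcO_X}\circ(-\otimes\mcO_X(p))$, whereas you write the twist in the opposite order $(-\otimes\mcO_X(p))\circ\ST_{\mcO_X}$. This reflects the choice of skeleton (ordering of the punctures) and is fixed by applying Theorem~\ref{thm:compsph} with the two copies of $D^b(\pt)$ swapped; your fixed-point computation is unaffected since the two matrices are conjugate. Just be sure the order you pick matches the skeleton you declare so that the induced monodromy on $\partial\Delta$ is literally trivial, as required by Example~\ref{ex:trivialmonod}.
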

\begin{proof}
The proof is almost identical to 
that of Theorem \ref{thm:schober1}. 
The first difference is that we use $(-) \otimes \mcO_X(p)$ 
instead of the degree three line bundle $\mcO_{\bP^2}(1)|_X$. 
The second difference is that 
the skyscraper sheaf $\mcO_p$ is spherical 
for an elliptic curve $X$, whose twist is isomorphic to 
$(-) \otimes \mcO_X(p)$. 

Finally, to construct a spherical functor 
$\langle D^b(\pt), D^b(\pt) \rangle \to D^b(X)$ 
whose dual twist is 
$(- \otimes \mcO_X(p))^{-1} \circ \ST_{\mcO_X}^{-1}$, 
we use Theorem \ref{thm:compsph}. 
\end{proof}

\subsection{Perverse schober on the $A$-side}
Here, we construct the {\it mirror} perverse schober 
using the homological mirror symmetry for an elliptic curve. 

Let $X=X/(\bZ \oplus \tau \bZ)$ be an elliptic curve, 
where $\tau$ is an element of the upper half plane. 
Then its {\it mirror} is defined to be a pair 
$X^\vee \coloneqq (T, \omega_\bC)$, 
where $T=\bC/\bZ^{\oplus 2}$ is a torus, 
and $\omega_\bC \coloneqq \tau dx \wedge dy$ 
is a {\it complexified K{\"a}hler form}. 
We denote by $\pi \colon \bC \to T$ the quotient map. 

We briefly recall the definition of 
the Fukaya category $D^\pi\Fuk(X^\vee)$, following \cite{pz98}. 
An object of the Fukaya category $D^\pi\Fuk(X^\vee)$ is isomorphic to a tuple $(L, \alpha, M)$, where 
\begin{itemize}
    \item $L \subset T$ is a special Lagrangian submanifold, 
    \item $\alpha \in \bR$ is a real number such that 
    the equation 
    \[
    L=\pi\left(\{ 
    z(t) \in \bC \colon z(t)=z_0+e^{i\pi\alpha}t, 
    \quad t \in \bR
    \}\right)
    \]
    holds for some $z_0 \in \bC$. 
    \item $M$ is a $\bC$-local system on $L$ 
    whose monodromy has eigenvalues in 
    the unit circle. 
\end{itemize}

Morphisms in the Fukaya category are defined to be 
the morphisms between local systems restricted to 
the intersection of Lagrangian submanifolds. 
Finally, the $A_\infty$-structure is defined by using 
the complexified K{\"a}hler form $\omega_\bC$ and 
the moduli spaces of pseudo-holomorphic disks in $X^\vee$. 

We denote by 
$\mcL_0=(L_0, 1/2, \underline{\bC}_{L_0})$ 
(resp. $\mcL_1=(L_1, 0, \underline{\bC}_{L_1})$) 
$\in D^\pi\Fuk(X^\vee)$, 
where $L_0$ (resp. $L_1$) is the image of the imaginary axis 
(resp. the real axis) in $\bC$ under the projection map 
$\pi \colon \bC \to T$. 

\begin{thm}[\cite{pz98}] \label{thm:mirrorell}
There exists an equivalence 
\begin{equation} \label{eq:mirrorell}
\Phi_{\mirror} \colon D^b(X) \xrightarrow{\sim} D^\pi\Fuk(X^\vee), 
\end{equation}
which sends $\mcO_e, \mcO_X$ to 
$\mcL_0, \mcL_1$, respectively, 
where $e \in X$ is the origin. 
\end{thm}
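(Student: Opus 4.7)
The plan is to follow the classical strategy of Polishchuk--Zaslow \cite{pz98}, establishing the equivalence by exhibiting compatible generating pairs on both sides and matching the associated $A_\infty$-algebras of endomorphisms.

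First, I would identify generators. On the $B$-side, Atiyah's classification of coherent sheaves on an elliptic curve, combined with the fact that every line bundle of arbitrary degree can be obtained as an iterated cone involving $\mcO_X$ and translates of the skyscraper $\mcO_e$, shows that $\{\mcO_X, \mcO_e\}$ split-generates $D^b(X)$. On the $A$-side, every special Lagrangian on the torus $T$ of rational slope is, up to translation and grading shift, a straight circle, and such circles are quasi-isomorphic to twisted complexes built from $\mcL_0$ and $\mcL_1$; together with split-closure this shows these two objects split-generate $D^\pi\Fuk(X^\vee)$.

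Second, I would compute the morphism spaces and check a degreewise match. Using Serre duality with $\omega_X \cong \mcO_X$, one finds $\Hom(\mcO_X, \mcO_e) = \bC$ concentrated in degree $0$, $\Ext^1(\mcO_e, \mcO_X) = \bC$ with the opposite direction Hom's vanishing in the opposite degree, and each of $\End(\mcO_X)$ and $\End(\mcO_e)$ is $\bC \oplus \bC[-1]$. On the $A$-side, $L_0$ and $L_1$ intersect transversally in a single point of $T$, so each of the Floer complexes $CF^*(\mcL_0, \mcL_1)$ and $CF^*(\mcL_1, \mcL_0)$ is one-dimensional in the degree dictated by the chosen gradings $\alpha_0 = 1/2$ and $\alpha_1 = 0$, while $HF^*(\mcL_i, \mcL_i) \cong H^*(S^1; \bC)$. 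The dimensions and degrees match.

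Third, and most importantly, I would match the $A_\infty$-structures. The higher Fukaya products count pseudo-holomorphic polygons in the flat torus; since the symplectic form is constant, these counts organize themselves as sums over polygon classes of $\exp\bigl(2\pi i \int \omega_\bC\bigr)$, and such sums are precisely theta series whose level is determined by the relevant winding data. On the $B$-side, the higher compositions in $\End^*(\mcO_X \oplus \mcO_e)$ are encoded by the multiplication of sections of line bundles on $X$, which classically are also expressed as theta series of the same level. Identifying these two families of theta functions term-by-term produces a quasi-equivalence of $A_\infty$-algebras; invoking the formalism of Theorem \ref{thm:RHom} (or equivalently passing to perfect modules and split-closures) upgrades this to the desired triangulated equivalence $\Phi_{\mirror}$ with $\mcO_X \mapsto \mcL_1$ and $\mcO_e \mapsto \mcL_0$. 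The main obstacle is this last identification: one must verify not only that $\Hom$ spaces agree numerically but that all higher products — computed on the two sides by very different means, analytic counts of holomorphic polygons versus algebraic multiplication of sections — yield the same theta-function structure constants.
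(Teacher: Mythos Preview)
The paper does not supply its own proof of this theorem: it is quoted as a result from \cite{pz98} and used as a black box. Your sketch is a faithful outline of the Polishchuk--Zaslow argument that the citation points to --- matching the generating pairs $(\mcO_X,\mcO_e)$ and $(\mcL_1,\mcL_0)$, computing the Floer and Ext groups, and identifying the $A_\infty$-products on both sides through theta-function structure constants --- so in that sense your approach coincides with the one the paper relies on.
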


In particular, the objects 
$\mcL_i \in D^\pi\Fuk(X^\vee)$ ($i=0, 1$) 
are spherical objects. 
We denote by $\ST_{\mcL_i} \in \Auteq(D^\pi\Fuk(X^\vee))$ 
the corresponding spherical twists, 
which are called the {\it Dehn twists along $\mcL_i$}. 

Combining Proposition \ref{prop:schoberB} 
and Theorem \ref{thm:mirrorell}, 
we obtain the following: 

\begin{thm} \label{thm:schoberA}
There exists a perverse schober $\fP^A$ 
on $(\bP^1, \{0, 1, \infty\})$ satisfying the following properties: 
\begin{enumerate}
    \item The schober $\fP^B$ restricted to the open subset 
    $\bP^1 \setminus \{0, 1, \infty \}$ is isomorphic to 
    a local system $\fL^A$ of categories with the fiber 
    $D^\pi\Fuk(X^\vee)$, 
    defined by the assignments 
    \begin{equation} \label{eq:mirrorLC}
    a \mapsto \ST_{\mcL_0}, \quad 
    b \mapsto \ST_{\mcL_1}. 
    \end{equation}
    
    \item Restricting the schober $\fP^A$ to small disks 
    around $\infty, 1, 0$, we obtain the spherical functors 
    with the signs 
    \begin{align*}
        &D^b(\pt) \to D^\pi\Fuk(X^\vee), 
        \quad \bC \mapsto \mcL_0, 
        \quad \epsilon_\infty=1, \\
        &D^b(\pt) \to D^\pi\Fuk(X^\vee), 
        \quad \bC \mapsto \mcL_1, 
        \quad \epsilon_1=1, \\
        &\langle D^b(\pt), D^b(\pt) \rangle \to D^\pi\Fuk(X^\vee), 
        \quad \epsilon_0=-1, 
    \end{align*}
    respectively. 
\end{enumerate}

Under the mirror symmetry equivalence (\ref{eq:mirrorell}), 
the perverse schober $\fP^A$ is identified with 
the schober $\fP^B$ with $p=e \in X$. 
\end{thm}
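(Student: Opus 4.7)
The plan is to construct $\fP^A$ by transporting the $B$-side schober $\fP^B$ (with $p=e$) of Proposition \ref{prop:schoberB} across the mirror equivalence $\Phi_{\mirror}$ of Theorem \ref{thm:mirrorell}. A perverse schober on $(\bP^1, \{0,1,\infty\})$ is a purely categorical datum---a local system of dg categories on the thrice-punctured sphere, plus three coordinatized schobers compatibly glued to the punctures---so any dg-enhanceable equivalence between the generic fibers transports one such structure to another, and the final identification $\fP^A \simeq \fP^B$ becomes essentially tautological.

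First I would rewrite the monodromy of $\fL^B$ in a form that matches the prescribed $A$-side description. On an elliptic curve one has the classical identity $\ST_{\mcO_e} \simeq (-)\otimes \mcO_X(e)$, coming from the short exact sequence $0 \to \mcO_X \to \mcO_X(e) \to \mcO_e \to 0$ together with the definition of the spherical twist. Thus the monodromy $a \mapsto (-)\otimes \mcO_X(e)$ of $\fL^B$ agrees with $a \mapsto \ST_{\mcO_e}$, and every generator of $\fL^B$ as well as every spherical functor attached to a puncture in $\fP^B$ is now phrased purely in terms of the spherical objects $\mcO_e, \mcO_X \in D^b(X)$, their twists, and the semi-orthogonal gluing of Theorem \ref{thm:compsph} used at $0$.

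Second, applying $\Phi_{\mirror}$, which sends $\mcO_e \mapsto \mcL_0$ and $\mcO_X \mapsto \mcL_1$ by Theorem \ref{thm:mirrorell}, translates each piece of data into its stated $A$-side counterpart: $\ST_{\mcO_e}$ becomes $\ST_{\mcL_0}$, $\ST_{\mcO_X}$ becomes $\ST_{\mcL_1}$, and the point-spherical functors $\bC \mapsto \mcO_e, \mcO_X$ become $\bC \mapsto \mcL_0, \mcL_1$. The composite spherical functor at $0$ built via Theorem \ref{thm:compsph} transports as well, since semi-orthogonal decompositions are preserved by equivalences. This produces a perverse schober $\fP^A$ satisfying items (1) and (2), and the final identification claim is immediate from the construction.

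The main obstacle lies at the level of dg enhancements: to invoke spherical functors and their gluing via Theorems \ref{thm:RHom} and \ref{thm:compsph}, $\Phi_{\mirror}$ must lift to a quasi-equivalence of chosen dg enhancements of $D^b(X)$ and $D^\pi\Fuk(X^\vee)$, and the coherence datum $\theta_{g,h}$ (Definition \ref{def:Loc}) together with the braid-compatibility data of Definition \ref{def:schoberdisk} must be transported faithfully. Granted a dg lift of the Polishchuk--Zaslow equivalence---standard since both sides carry canonical dg or $A_\infty$-enhancements---this is formal naturality, and no additional geometric input is needed beyond what is already recorded in Theorem \ref{thm:mirrorell}.
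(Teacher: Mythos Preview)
Your proposal is correct and follows precisely the paper's approach: the paper presents Theorem \ref{thm:schoberA} as an immediate consequence of ``combining Proposition \ref{prop:schoberB} and Theorem \ref{thm:mirrorell}'', and you have spelled out exactly that transport argument, including the key identity $\ST_{\mcO_e}\simeq(-)\otimes\mcO_X(e)$ already invoked in the proof of Proposition \ref{prop:schoberB}. Your additional remarks on the dg-enhancement level are a reasonable expansion of what the paper leaves implicit.
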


\section{Orlov equivalences via VGIT} \label{sec:vgit}
In this section, we review the proof of Orlov's theorem 
(Theorem \ref{thm:Orlov1}) via the magic window theorem and 
Kn{\"o}rror periodicity, following \cite[Section 7]{bfk19}. 
This approach naturally gives an example of spherical pairs 
in the sense of Definition \ref{def:Spair}. 

\subsection{VGIT and window shift autoequivalences}
Let us put $G \coloneqq \bC^* \times \bC^*$, 
$Y:=\bC^{n+2} \times \bC$, and define an action of $G$ on $X$ 
as follows: 
\[
(\lambda, \mu) \cdot (\vec{x}, u) \coloneqq 
\left(\lambda \vec{x}, \lambda^{-(n+2)}\mu u \right), 
\quad (\lambda, \mu) \in G, (\vec{x}, u) \in Y. 
\]
Let $\chi^{\pm} \colon G \to \bC^*$ be characters defined as 
$\chi(\lambda, \mu) \coloneqq \lambda^\pm.$ 
By taking the GIT quotients with respect to 
the characters $\chi^\pm$, we obtain 
\[
Y^+ = [V(\mcO_{\bP^{n+1}}(-n-2))/\bC^*], \quad 
Y^- = [\bC^{n+2}/\bC^*], 
\]
where $\bC^*$ acts on the fiber of $V(\mcO_{\bP^{n+1}}(-n-2))$ 
by weight one, and $\bC^*$ acts on $\bC^{n+2}$ by weight one. 
We have natural embeddings 
$Y^\pm \hookrightarrow [Y/G]$. 

We further take a general homogeneous polynomial 
$W \in \bC[x_1, \cdots, x_{n+2}]$ of degree $n+2$, 
and define a function $Q_W \colon Y \to \bC$ by 
$Q_W(\vec{x}, u):=uW(\vec{x})$. 
Then $Q_W$ is a $\eta$-semi-invariant function, where 
$\eta \colon G \to \bC^*$ is a character defined as 
$\eta(\lambda, \mu)=\mu$. 
Indeed, we have 
\[
Q_W((\lambda, \mu) \cdot (\vec{x}, u)))
=(\lambda^{-n-2}\mu u) \lambda^{n+2}W(\vec{x})
=\mu u W(\vec{x})=\mu Q_W(\vec{x}, u). 
\]
Hence the data $(Y, \eta, Q_W)^G$ defines 
a gauged Landau-Ginzburg model. 

\begin{defin} \label{def:window}
\begin{enumerate}
\item For an interval $I$, we define 
the {\it window subcategory} 
$\mcG_I \subset D^\abs[\fact_G(Y, \eta, Q_W)]$ 
to be a triangulated subcategory generated 
by factorizations (\ref{eq:deffact}) with terms 
\[
E_j \simeq \bigoplus_{i \in I \cap \bZ}\mcO(i)^{l_{ij}}, \quad 
j=1, 0, \quad l_{ij} \in \bZ_{\geq 0} 
\]
as $\bC^*$-equivariant sheaves on $Y$,  
where $\bC^*$ acts on $Y$ via the inclusion $\bC^* \subset G$ 
into the first factor. 

\item For an integer $w \in \bZ$, we put 
\[
\mcG_w \coloneqq \mcG_{[w, w+n+2)}, \quad 
\overline{\mcG}_w \coloneqq \mcG_{[w, w+n+2]}. 
\]
\end{enumerate}
\end{defin}

The following is a special case of the main theorem of 
\cite{bfk19, hl15}: 
\begin{thm}[\cite{bfk19, hl15}] \label{thm:window}
For each integer $w \in \bZ$, the compositions 
\begin{align*}
&r^+_w \colon \mcG_w 
\hookrightarrow D^\abs[\fact_G(Y, \eta, Q_W)] 
\xrightarrow{\res^+} 
D^\abs[\fact_{\bC^*}(
V(\mcO_{\bP^{n+1}}(-n-2)), \chi_1, Q_W)], \\
&r^-_w \colon \mcG_w 
\hookrightarrow D^\abs[\fact_G(Y, \eta, Q_W)] 
\xrightarrow{\res^-} 
D^\abs[\fact_{\bC^*}(
\bC^{n+2}, \chi_{n+2}, W)], 
\end{align*}
are equivalences, 
where $\res^\pm$ denote the restriction functors to 
the semi-stable loci $Y^\pm$. 
In particular, we have an equivalence 
\begin{align*}
\psi_w \coloneqq 
r^-_w \circ (r^+_w)^{-1} \colon 
D^\abs[\fact_{\bC^*}(
&V(\mcO_{\bP^{n+1}}(-n-2)), \chi_1, Q_W)] \\
&\quad \to D^\abs[\fact_{\bC^*}(\bC^{n+2}, \chi_{n+2}, W)]. 
\end{align*}
\end{thm}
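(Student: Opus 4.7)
The theorem is essentially a direct application of the magic window / grade restriction rule theorems of \cite{bfk19, hl15} to the gauged Landau--Ginzburg model $(Y,\eta,Q_W)^G$ with the two choices of stability character $\chi^\pm$. The plan is to (i) identify the Kempf--Ness data on both sides of the VGIT wall, (ii) verify that the window width $n+2$ used in Definition \ref{def:window} is precisely the ``magic'' width predicted by loc.~cit., and (iii) read off the theorem.

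For step (i), I would compute the unstable loci via the Hilbert--Mumford criterion. A monomial $\vec{x}^{\alpha}u^k$ has $G$-weight $(|\alpha|-(n+2)k,\,k)$, so sections of $\mcO_Y(N\chi^+)$ are polynomials in $\vec{x}$ alone. Hence $Y^{+,ss}=\{\vec{x}\neq 0\}$ with Kempf--Ness stratum $S^+=\{\vec{x}=0\}$, and the distinguished destabilizer may be taken to be $\lambda_+(t)=(t,1)$, under which the $n+2$ coordinates $x_i$ each carry weight $+1$ on the normal bundle. Dually, sections of $\mcO_Y(N\chi^-)$ must involve a positive power of $u$, giving $Y^{-,ss}=\{u\neq 0\}$ with $S^-=\{u=0\}$ and destabilizer $\lambda_-(t)=(t^{-1},1)$, under which the normal coordinate $u$ has weight $+(n+2)$. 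Forming the residual quotients recovers $Y^+=[V(\mcO_{\bP^{n+1}}(-n-2))/\bC^*]$ and $Y^-=[\bC^{n+2}/\bC^*]$ as stated, and the equivariant function $Q_W=uW$ restricts (after setting $u=1$ on $Y^-$) to the $\chi_{n+2}$-semi-invariant $W$.

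For step (ii), the ``magic window width'' in \cite{bfk19, hl15} equals the sum of the positive normal weights of the destabilizer on the Kempf--Ness stratum. By the computation above this sum equals $n+2$ on both sides. The subcategory $\mcG_w$ of Definition \ref{def:window}, whose generators are graded by characters of the first $\bC^*$-factor lying in the half-open interval $[w,w+n+2)$, is therefore exactly the magic window for both $\chi^+$ and $\chi^-$ simultaneously.

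For step (iii), the main theorem of \cite{bfk19, hl15} asserts that restriction from the magic window to the derived factorization category of each GIT quotient is an equivalence; applying this on both sides gives $r^\pm_w$ as equivalences, and their composite $\psi_w=r^-_w\circ (r^+_w)^{-1}$ yields the desired Orlov-type equivalence. The main obstacle is purely bookkeeping: one must carefully align the two different destabilizing one-parameter subgroups $\lambda_\pm$ with the grading used to define $\mcG_w$ (via the first $\bC^*$-factor) so that \emph{the same} window $\mcG_w$ is indeed magic for both stability chambers. Once this compatibility is verified the conclusion is immediate from the cited results.
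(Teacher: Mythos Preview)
The paper does not give its own proof of this theorem: it is stated as a special case of the main results of \cite{bfk19, hl15} and is simply cited, with no proof environment following the statement. Your proposal therefore goes beyond what the paper does, by sketching the verification that the general magic-window theorem applies to the particular gauged LG model $(Y,\eta,Q_W)^G$.

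Your sketch is correct in outline. The identification of the unstable loci $S^\pm$, the destabilizing one-parameter subgroups $\lambda_\pm$, and the normal weights (summing to $n+2$ on both sides) is accurate, and the conclusion then follows from the cited results. The ``bookkeeping obstacle'' you flag is genuine but easily resolved: the window for $\lambda_+=(t,1)$ constrains the first character to lie in $[w,w+n+2)\cap\bZ=\{w,\dots,w+n+1\}$, while the window for $\lambda_-=(t^{-1},1)$ constrains the \emph{negative} of the first character to lie in a half-open interval of the same width; on integers these can be made to agree, so the same $\mcG_w$ is indeed magic for both chambers. Since the paper treats the theorem as a black box, there is nothing further to compare.
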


\begin{defin} \label{def:windowshift}
For each integer $w \in \bZ$, we put 
\[
\Phi_w \coloneqq \psi_{w-1}^{-1} \circ \psi_w 
\in \Auteq\left( 
D^\abs[\fact_{\bC^*}(
V(\mcO_{\bP^{n+1}}(-n-2)), \chi_1, Q_W)]
\right)
\]
and call it as the {\it window shift autoequivalence}. 
\end{defin}

In the following, 
we will interpret the window shift autoequivalences 
as autoequivalences of $D^b(X)$ under the Kn{\"o}rror periodicity, 
where $X \coloneqq (W=0) \subset \bP^{n+1}$ 
is a smooth Calabi-Yau hypersurface of dimension $n$. 
We have the following diagram: 
\begin{equation} \label{eq:Kndiag}
\xymatrix{
&X \ar@{^{(}->}[r]^{i_X} \ar@{^{(}->}[d]_{\gamma_X} 
&\bP^{n+1} \ar@{^{(}->}[d]^{\gamma} \\
&V(\mcO_{\bP^{n+1}}(-n-2))|_X \ar@{^{(}->}[r]^{i} \ar[d]_{p} 
&V(\mcO_{\bP^{n+1}}(-n-2)) \ar[r]^-{Q_W} \ar[d]^{q} &\bC \\
&X \ar@{^{(}->}[r]_{i_X} &\bP^{n+1}, 
}
\end{equation}
where $p, q$ denote the projections, 
$\gamma, \gamma_X$ are the inclusions as zero sections, 
and $i, i_X$ are the natural inclusions.

We use the following result: 
\begin{prop}[{\cite[Proposition 3.4]{hls16}}] \label{prop:sphfact}
The window shift autoequivalence 
$\Phi_{w}$ fits into the following exact triangle 
\begin{equation} \label{eq:sphfact}
\dR\Hom(\gamma_*\mcO_{\bP^{n+1}}(-w-n-1), -) 
\otimes \gamma_*\mcO_{\bP^{n+1}}(-w-n-1) 
\to \id \to \Phi_{w}. 
\end{equation} 
In other words, $\Phi_{w}$ is the spherical twist around 
$\gamma_*\mcO_{\bP^{n+1}}(-w-n-1)$. 
\end{prop}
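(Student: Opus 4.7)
The plan is to realize $\Phi_w$ as a VGIT wall-crossing mutation and identify the resulting spherical kernel. Both $\mcG_w$ and $\mcG_{w-1}$ are contained in the enlarged window $\overline{\mcG}_{w-1} = \mcG_{[w-1,w+n+2]}$, and within $\overline{\mcG}_{w-1}$ each appears as an admissible subcategory: the complementary admissible subcategory of $\mcG_w$ (resp.~of $\mcG_{w-1}$) is $\mcA_{w-1}$ (resp.~$\mcA_{w+n+1}$), where $\mcA_k$ denotes the thick subcategory generated by factorizations whose underlying $\bC^*_\lambda$-equivariant terms have pure weight $k$. Since $\res^+$ remains an equivalence on $\overline{\mcG}_{w-1}$, the autoequivalence $\Phi_w$ on the $+$-side equals the transport through $r^+$ of the mutation functor $\mcG_w \hookrightarrow \overline{\mcG}_{w-1} \twoheadrightarrow \mcG_{w-1}$.

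I would then invoke the window-shift theorem of Halpern-Leistner--Shipman \cite[Prop.~3.4]{hls16} (see also \cite{bfk19}), which identifies such mutations with spherical twists. The spherical object is obtained by taking a generator of $\mcA_{w+n+1}$ in $\overline{\mcG}_{w-1}$ and restricting it to $Y^+$. Concretely, the generator can be built as the Koszul factorization of $Q_W = u\,W(\vec x)$ associated to the regular section $\vec x \in H^0(\mcO_Y(1)^{\oplus (n+2)})$, shifted by the $\bC^*_\lambda$-character of weight $w+n+1$. This factorization is supported on the $+$-unstable stratum $\{\vec x = 0\}$ inside $[Y/G]$; restricting to $Y^+$ and running Kn{\"o}rrer periodicity in the $u$-direction (which is the fiber coordinate of $V(\mcO_{\bP^{n+1}}(-n-2)) \to \bP^{n+1}$) collapses the factorization to $\gamma_*\mcO_{\bP^{n+1}}(-w-n-1)$. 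The triangle (\ref{eq:sphfact}) is then the standard cone presentation of a spherical twist: it is the cone of the counit of the adjunction between $(-) \otimes \gamma_*\mcO_{\bP^{n+1}}(-w-n-1)$ and $\dR\Hom(\gamma_*\mcO_{\bP^{n+1}}(-w-n-1), -)$.

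The hardest step is to pin down the precise weight $-w-n-1$ through this chain of identifications. One must simultaneously keep track of (i) the boundary weight $w+n+1$ of the excised subcategory $\mcA_{w+n+1}$, (ii) the shift by $-(n+2)$ produced by the length-$(n+2)$ Koszul resolution of $\{\vec x = 0\}$, (iii) the shift contributed by Kn{\"o}rrer periodicity in the $u$-direction, and (iv) the sign flip that converts $\bC^*_\lambda$-equivariant weights on $Y$ into intrinsic line bundle degrees on the zero section $\bP^{n+1}$. A convenient cross-check is to evaluate both sides of (\ref{eq:sphfact}) on the generating collection $\{\gamma_*\mcO_{\bP^{n+1}}(k)\}_{k \in \bZ}$ of the $+$-side category, computing the right-hand side via the projection formula and Serre duality on $\bP^{n+1}$, and the left-hand side by direct window computations; since this collection generates, matching the two computations suffices.
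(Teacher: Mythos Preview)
The paper does not give its own proof of this proposition: it is quoted verbatim from \cite[Proposition~3.4]{hls16} and used as a black box. So there is no ``paper's proof'' to compare against; you are supplying an argument where the authors simply cite one.

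Your outline is in the right spirit---the Halpern--Leistner--Shipman argument does go through the enlarged window $\overline{\mcG}_{w-1}$ and its two semi-orthogonal decompositions---but one step is stated incorrectly and this is where the actual content lies. You write that ``$\res^+$ remains an equivalence on $\overline{\mcG}_{w-1}$''. It does not: the magic window has width exactly $n+2$ for both GIT chambers (the $+$-unstable locus $S^+=\{\vec x=0\}$ has normal weights $(+1)^{n+2}$, and the $-$-unstable locus $S^-=\{u=0\}$ has a single normal weight $-(n+2)$), so on the width-$(n+3)$ window $\overline{\mcG}_{w-1}$ both restrictions $\res^\pm$ acquire a nontrivial kernel. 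Consequently your deduction that $\Phi_w$ equals the $r^+$-transport of the bare mutation $\mcG_w\hookrightarrow\overline{\mcG}_{w-1}\twoheadrightarrow\mcG_{w-1}$ does not follow from what you have written. Relatedly, the orthogonal complements of $\mcG_w$ and $\mcG_{w-1}$ inside $\overline{\mcG}_{w-1}$ are not the ``pure weight'' pieces $\mcA_{w-1}$, $\mcA_{w+n+1}$: as the paper records in (\ref{eq:SODbarG}), they are generated by $j^-_*\mcO_{S^-}(\cdot)$ and $j^+_*\mcO_{S^+}(\cdot)$ respectively, each of which has a Koszul resolution spreading across the whole window.

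The correct mechanism, which your second paragraph gestures at, is that these two SODs form a \emph{spherical pair} in the sense of Definition~\ref{def:Spair}: the complement of $\mcG_{w-1}$ is supported on $S^+$ and is killed by $\res^+$, while the complement of $\mcG_w$ is supported on $S^-$ and is killed by $\res^-$. The interplay of the two restrictions, not a single one being an equivalence on the large window, is what identifies $\Phi_w$ with the twist around $j^-_*\mcO_{S^-}(\cdot)\big|_{Y^+}\simeq\gamma_*\mcO_{\bP^{n+1}}(-w-n-1)$. Your bookkeeping remarks about the weight $-w-n-1$ are reasonable, but the cross-check on generators at the end is not a substitute for the argument: the hard part is precisely establishing the functorial triangle, and evaluating on objects does not give that.
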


Recall from Theorem \ref{thm:Knorrer} 
that we have an equivalence 
\begin{equation} \label{eq:KnHS}
i_*p^* \colon D^b(X) \xrightarrow{\sim} 
D^\abs[\fact_{\bC^*}(
V(\mcO_{\bP^{n+1}}(-n-2)), \chi_1, Q_W)]. 
\end{equation}

\begin{prop} \label{prop:window=st}
Let $w \in \bZ$ be an integer. 
Under the K{\"o}rror periodicity equivalence (\ref{eq:KnHS}), 
the window shift autoequivalence $\Phi_w$ 
corresponds to the spherical twist 
$\ST_{\mcO_X(w+2n+3)}$ on $D^b(X)$. 
\end{prop}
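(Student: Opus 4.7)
The plan is to combine Proposition~\ref{prop:sphfact}, which presents $\Phi_w$ explicitly as the spherical twist around $\gamma_*\mcO_{\bP^{n+1}}(-w-n-1)$, with the general principle that equivalences of triangulated categories intertwine spherical twists: for any equivalence $\Theta$ and any spherical object $E$, one has $\Theta \circ \ST_E \circ \Theta^{-1} \simeq \ST_{\Theta(E)}$. Applied to the Kn\"orrer periodicity $\Theta = i_*p^*$, this reduces the problem to showing that the spherical object $\gamma_*\mcO_{\bP^{n+1}}(-w-n-1)$ corresponds, under the inverse Kn\"orrer equivalence, to $\mcO_X(w+2n+3)$ in $D^b(X)$ --- up to a cohomological shift, which does not affect the resulting spherical twist.

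To establish this identification, I would pass to explicit Koszul factorization models via Lemma~\ref{lem:Koszul}. Writing $q \colon V \to \bP^{n+1}$ for the projection on the total space $V := V(\mcO_{\bP^{n+1}}(-n-2))$, the tautological section $u$ of $q^*\mcO_{\bP^{n+1}}(-n-2)$ vanishes precisely on the zero section $\gamma(\bP^{n+1})$, and $q^*W$ vanishes precisely on $V|_X \subset V$; their product equals the potential $Q_W$. Lemma~\ref{lem:Koszul}(2) then gives Koszul factorization presentations
\[
\gamma_*\mcO_{\bP^{n+1}}(-w-n-1) \simeq K(u, q^*W) \otimes q^*\mcO(-w-n-1), \qquad i_*p^*\mcO_X(k) \simeq K(q^*W, u) \otimes q^*\mcO(k)
\]
for any integer $k$. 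These two Koszul factorizations differ only by interchanging the roles of $u$ and $q^*W$, an operation governed by the Koszul duality $K(s,t)^\vee \simeq K(t^\vee, s^\vee)$ of Lemma~\ref{lem:Koszul}(1) combined with the formula $K(s,t)^\vee \simeq \mcO_{Z_s} \otimes \bigwedge^{\rk \mcE} \mcE[-\rk \mcE]$ of Lemma~\ref{lem:Koszul}(2).

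A careful tracking of the $\chi_1$-equivariant weights, the cohomological shift $[1]$ produced when one dualizes a rank-one Koszul factorization, and the line bundle twists by $q^*\mcO(\cdot)$ shows that matching the two factorizations forces $k + (-w-n-1) = n+2$, giving $k = w + 2n + 3$ as claimed. Geometrically, the numerical shift by $n+2$ reflects the degree of the hypersurface $X \subset \bP^{n+1}$ and enters through the factor $\bigwedge^{\rk \mcE} \mcE \simeq q^*\mcO(-n-2)$ appearing in the Koszul duality formula. Once this identification of spherical objects is in hand, the proposition follows at once from the conjugation property of spherical twists.

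The main obstacle is the bookkeeping of $\chi_1$-equivariant shifts, cohomological shifts, and the various twists by powers of $q^*\mcO(n+2)$ that appear in the matrix factorization formalism; care is needed to ensure that all sign and grading conventions used in Lemma~\ref{lem:Koszul} are applied consistently throughout the computation.
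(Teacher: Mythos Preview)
Your approach is correct and genuinely different from the paper's. The paper first reduces to the single case $w=-n-1$ via the conjugation identity $\Phi_{k-n-1}\simeq(\otimes\,\mcO_V(k))\circ\Phi_{-n-1}\circ(\otimes\,\mcO_V(-k))$, and then compares the two defining triangles \emph{term by term}: it computes $\dR\Hom(\gamma_*\mcO_{\bP^{n+1}},i_*p^*E)$ via adjunction and a base-change identity (this is where part~(2) of Lemma~\ref{lem:Koszzero} is used), and separately matches the spherical objects using part~(1) of that lemma. You instead invoke the conjugation principle $\Theta\circ\ST_E\circ\Theta^{-1}\simeq\ST_{\Theta(E)}$ for the Kn\"orrer equivalence $\Theta=i_*p^*$ directly, which collapses the whole argument to the single object-level identification $\gamma_*\mcO_{\bP^{n+1}}(-w-n-1)\simeq i_*p^*\mcO_X(k)[\text{shift}]$; this is exactly the content of Lemma~\ref{lem:Koszzero}(1), and the $\dR\Hom$ computation becomes unnecessary (it follows for free from full faithfulness of $i_*p^*$). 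Your route is more conceptual and shorter, at the cost of needing the conjugation formula to hold at the enhanced level --- which is fine here since everything is Fourier--Mukai/dg-enhanced.

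One word of caution on the numerics: the equation $k+(-w-n-1)=n+2$ you wrote down does not drop out of a naive reading of Lemma~\ref{lem:Koszul}. A direct application of Lemma~\ref{lem:Koszzero}(1) together with the projection formula gives $\gamma_*\mcO_{\bP^{n+1}}(a)\simeq i_*p^*\mcO_X(a+n+2)[-1]$, which for $a=-w-n-1$ yields $k=1-w$ rather than $w+2n+3$. The paper avoids this pitfall precisely by first reducing to $w=-n-1$ (where both formulas agree) and handling general $w$ through the window-shift conjugation, whose sign conventions are tied to those in \cite{hls16}. If you want to run your argument uniformly in $w$, you will need to reconcile carefully the sign/weight conventions in Proposition~\ref{prop:sphfact} with those governing the $\chi_1$-equivariant twists in the Koszul factorization; this is exactly the bookkeeping you flag, and it is where the argument is most likely to slip.
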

\begin{proof}
To simplify the notation, we put 
$V \coloneqq V(\mcO_{\bP^{n+1}}(-n-2))$. 
Since we have an isomorphism 
\[
\Phi_{k-n-1} \simeq ((-) \otimes \mcO_V(k)) \circ \Phi_{-n-1} 
\circ ((-) \otimes \mcO_V(-k))
\]
of functors (see e.g., \cite[Lemma 3.2]{hls16}) 
for each $k \in \bZ$, 
we may assume $w=-n-1$. 

We prove a functorial isomorphism 
$\Phi_{-n-1} \circ i_*p^*(E) 
\simeq i_*p^* \circ \ST_{\mcO_X(n+2)}(E)$ 
for each object $E \in D^b(X)$. 
By applying the functor $i_*p^*$ 
to the defining triangle 
\[
\dR\Hom(\mcO_X(n+2), E) \otimes \mcO_X(n+2) 
\to E \to \ST_{\mcO_X(n+2)}(E), 
\]
we obtain 
\begin{equation} \label{eq:sphDb}
    \dR\Hom(\mcO_X(n+2), E) \otimes i_*\mcO_{V|_X}(n+2) 
\to i_*p^*E \to i_*p^* \circ \ST_{\mcO_X(n+2)}(E). 
\end{equation}

On the other hand, by the triangle (\ref{eq:sphfact}), 
we have the exact triangle 
\begin{equation} \label{eq:phiip}
\dR\Hom(\gamma_*\mcO_{\bP^{n+1}}, i_*p^*E) 
\otimes \gamma_*\mcO_{\bP^{n+1}} 
\to i_*p^*E \to \Phi_{-n-1}(i_*p^*E). 
\end{equation}
We compute the complex 
$\dR\Hom(\gamma_*\mcO_{\bP^{n+1}}, i_*p^*E)$ as follows: 
\begin{equation} \label{eq:homadj}
\begin{aligned}
    \dR\Hom(\gamma_*\mcO_{\bP^{n+1}}, i_*p^*E) 
    &\simeq \dR\Hom(i^*\gamma_*\mcO_{\bP^{n+1}}, p^*E) \\
    &\simeq \dR\Hom(\gamma_{X*}i^*_X\mcO_{\bP^{n+1}}, p^*E) \\
&\simeq \dR\Hom(i_X^*\mcO_{\bP^{n+1}}, \gamma^*_Xp^*E(-n-2)[1]), \\
&\simeq \dR\Hom(\mcO_X(n+2), E)[1], 
\end{aligned}
\end{equation}
where the first isomorphism follows from the adjunction; 
the second isomorphism follows from 
Lemma \ref{lem:Koszzero} (2) below; 
the third isomorphism follows from the adjunction 
and the fact that $\omega_{X/V|_X} \simeq \mcO_X(-n-2)$; 
the last isomorphism follows from $p \circ \gamma_X=\id_X$. 

Moreover, we have an isomorphism 
\begin{equation} \label{eq:Kosz}
    i_*\mcO_{V|_X}(n+2)[-1] \simeq \gamma_*\mcO_{\bP^{n+1}}
\end{equation}
by Lemma \ref{lem:Koszzero} (1) below. 

By the isomorphisms (\ref{eq:homadj}) and (\ref{eq:Kosz}), 
it follows that the triangles 
(\ref{eq:sphDb}) and (\ref{eq:phiip}) are isomorphic, 
as required. 
\end{proof}

We have used the following lemma in the above proof: 
\begin{lem} \label{lem:Koszzero}
Put $\mcL \coloneqq \mcO_{\bP^{n+1}}(-n-2)$, 
$V \coloneqq V(\mcL)$.  
The following statements hold: 
\begin{enumerate}
    \item We have an isomorphism 
    \[
    \gamma_*\mcO_{\bP^{n+1}} 
    \simeq i_*\mcO_{V|_X}(n+2)[-1]. 
    \]
    \item We have an isomorphism  
    \[
    i^*\gamma_*\mcO_{\bP^{n+1}} \simeq \gamma_{X*}i^*_X\mcO_{\bP^{n+1}}. 
    \]
\end{enumerate}
\end{lem}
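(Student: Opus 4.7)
The plan is to prove both statements via Koszul arguments. Part (1) is an isomorphism in the derived factorization category $\Dabs[\fact_{\bC^*}(V, \chi_1, Q_W)]$ (it cannot hold in $D^b(V)$, because the two sides are supported on distinct codimension-one subvarieties $\gamma(\bP^{n+1})$ and $V|_X$); part (2) is a routine pullback of a Koszul resolution.

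For part (1), I would exploit the factorization of the superpotential $Q_W = u \cdot q^*W$, where $u$ is the tautological section of $q^*\mcL$ of $\bC^*$-weight one (with zero locus the zero section $\gamma(\bP^{n+1})$), and $q^*W \in H^0(V, q^*\mcL^\vee)$ is $\bC^*$-invariant (with zero locus $V|_X$). This gives two rank-one Koszul factorizations: $K(u, q^*W)$ with $\mcE = q^*\mcL^\vee(-\chi_1)$ and $K(q^*W, u)$ with $\mcE = q^*\mcL$, chosen so that $s$ and $t$ satisfy the $\bC^*$-equivariance in Definition~\ref{def:Fact}. Since both sections are regular, Lemma~\ref{lem:Koszul}(2) identifies $K(u, q^*W) \simeq \gamma_*\mcO_{\bP^{n+1}}$ and $K(q^*W, u) \simeq i_*\mcO_{V|_X}$ in the factorization category. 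It then suffices to verify directly that $K(u, q^*W) \simeq K(q^*W, u)(n+2)[-1]$: unwinding the formula $E[-1] = (E_0(-\chi_1) \to E_1 \to E_0)$ for the shift and twisting $K(q^*W, u)$ by $q^*\mcL^\vee = q^*\mcO(n+2)$, both sides have the same underlying equivariant terms $q^*\mcL^\vee(-\chi_1)$ and $\mcO_V$, and their differentials agree up to signs, which can be absorbed by the sign-change automorphism of factorizations that multiplies one component by $-1$. The projection formula converts the tensor with $q^*\mcL^\vee$ on $V$ into $i_*(\mcO_{V|_X}(n+2))$.

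For part (2), I would apply $i^*$ termwise to the Koszul resolution $[q^*\mcL^\vee(-\chi_1) \xrightarrow{\cdot u} \mcO_V]$ of $\gamma_*\mcO_{\bP^{n+1}}$ on $V$. The result $[p^*\mcL|_X^\vee(-\chi_1) \xrightarrow{i^*u} \mcO_{V|_X}]$ on $V|_X$ is the Koszul complex of the tautological section of the line bundle $p\colon V|_X \to X$, whose vanishing locus is $\gamma_X(X)$, and thus resolves $\gamma_{X*}\mcO_X = \gamma_{X*}i_X^*\mcO_{\bP^{n+1}}$. Exactness after pullback is preserved because the Cartier divisors $\gamma(\bP^{n+1})$ and $V|_X$ meet transversely along the codimension-two locus $\gamma_X(X) \subset V$, killing higher Tor. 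The main obstacle throughout is the bookkeeping of the $\bC^*$-character $\chi_1$ and its interplay with the shift functor in $\Dabs[\fact]$; in part~(1) the precise combination of the $[-1]$ shift and the $(n+2)$-twist is exactly what intertwines the two different supports $\gamma(\bP^{n+1})$ and $V|_X$ inside the factorization category.
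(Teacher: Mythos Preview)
Your proposal is correct and follows essentially the same route as the paper. Both arguments identify $\gamma_*\mcO_{\bP^{n+1}}$ and $i_*\mcO_{V|_X}$ with the two rank-one Koszul factorizations built from the tautological section and $q^*W$ via Lemma~\ref{lem:Koszul}(2), and then compare them; the only cosmetic difference is that the paper obtains the comparison in~(1) by invoking the duality $K(s,t)^\vee \simeq K(t^\vee,s^\vee)$ of Lemma~\ref{lem:Koszul}(1) together with the formula for $K(s,t)^\vee$, whereas you verify $K(u,q^*W)\simeq K(q^*W,u)(n+2)[-1]$ by direct inspection of the underlying two-term factorizations. For~(2) the paper likewise restricts the Koszul factorization, noting that $s^\vee|_{V|_X}=0$ so that $i^*K(t^\vee,s^\vee)=K(t^\vee|_{V|_X},0)\simeq \gamma_{X*}\mcO_X$; your transversality/Tor phrasing is the same observation in different language.
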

\begin{proof}
Put $s \coloneqq q^*W \colon q^*\mcL \to \mcO_V$, 
and let $t \colon \mcO_V \to q^*\mcL(\chi_1)$ 
be the tautological section. 
By Lemma \ref{lem:Koszul}, we have 
\[
\gamma_*\mcO_{\bP^{n+1}} \simeq K(t^\vee, s^\vee) 
\simeq K(s, t)^\vee \simeq i_*\mcO_{V|_X}(n_2)[-1], 
\]
which proves the first assertion. 

Similarly, we have 
\[
i^*\gamma_*\mcO_{\bP^{n+1}} 
\simeq i^*K(t^\vee, s^\vee) 
= K(t^\vee|_{V|_X}, 0) 
\simeq \gamma_{X*}i^*_X\mcO_{\bP^{n+1}}, 
\]
where the second equality follows from the vanishing $s|_{V|_X}=0$, 
hence the second assertion holds. 
\end{proof}

We can now give the proof of Orlov's theorem: 
\begin{proof}[Proof of Theorem \ref{thm:Orlov1}]
As before, we put 
$V \coloneqq V(\mcO_{\bP^{n+1}}(-n-2))$. 
The existence of the equivalence 
\[
D^b(X) \simeq \HMF(W) 
\]
follows from Theorem \ref{thm:window} 
together with the equivalences 
(\ref{eq:KnHS}) and (\ref{degree shift3}). 

For the second statement, 
consider the following commutative diagram: 
\begin{equation} \label{eq:winshif}
\xymatrix{
&D^\abs[\fact_{\bC^*}(V, \chi_1, Q_W)] 
\ar@/^2pc/[rr]^{\psi_{-2n-4}} \ar[d]_{\otimes \mcO(1)} 
&\mcG_{-n-3} \ar[l]_-{r^+_{-2n-4}} \ar[r]^-{r^-_{-2n-4}} 
\ar[d]_{\otimes \mcO(1)} 
&D^\abs[\fact_{\bC^*}(\bC^{n+2}, \chi_{n+2}, W)] 
\ar[d]_{\otimes \mcO(1)} \\
&D^\abs[\fact_{\bC^*}(V, \chi_1, Q_W)] 
\ar@/_2pc/[rr]_{\psi_{-2n-3}}
&\mcG_{-2n-3} \ar[l]^-{r^+_{-2n-3}} \ar[r]_-{r^-_{-2n-3}} 
&D^\abs[\fact_{\bC^*}(\bC^{n+2}, \chi_{n+2}, W)],
}
\end{equation}
where $\mcO(1) \in D^\abs[\fact_G(Y, \eta, Q_W)]$ denotes 
the $\bC^*$-equivariant line bundle of weight one, 
with respect to the $\bC^*$-action via the inclusion 
$\bC^* \subset G$ to the first factor. 

By (\ref{degree shift3}), the autoequivalence 
\[
(-)\otimes \mcO(1) \in 
\Auteq(
D^\abs[\fact_{\bC^*}(\bC^{n+2}, \chi_{n+2}, W)])
\]
corresponds to the degree shift equivalence $\tau$ 
under the natural equivalence 
\[
D^\abs[\fact_{\bC^*}(\bC^{n+2}, \chi_{n+2}, W)]) 
\simeq \HMF(W). 
\]
On the other hand, the autoequivalence $\otimes \mcO(1)$ 
on $D^\abs[\fact_{\bC^*}(V, \chi_1, Q_W)]$ 
corresponds to the autoequivalence $\otimes \mcO_X(1)$ 
on $D^b(X)$ via the Kn{\"o}rror periodicity (\ref{eq:KnHS}). 

By the above observations, 
the commutativity of the diagram (\ref{eq:winshif}) implies that 
\begin{align*}
(\psi_{-2n-4})^{-1} \circ \tau \circ \psi_{-2n-4}
&=(\psi_{-2n-4})^{-1} \circ \psi_{-2n-3} 
\circ (- \otimes \mcO_X(1)) \\
&=\Phi_{-2n-3} \circ (- \otimes \mcO_X(1)) \\ 
&=\ST_{\mcO_X} \circ (- \otimes \mcO_X(1)), 
\end{align*}
where the last equality follows from 
Proposition \ref{prop:window=st}. 
\end{proof}

\subsection{Spherical pairs from VGIT}
We end this section by constructing an example of spherical pairs 
using the theory of \cite{bfk19, hl15}, following \cite{don19a}. 
Let 
\[S^+ \coloneqq \{0\} \times \bC, 
S^- \coloneqq \bC^{n+2} \times \{0\} 
\subset Y=\bC^{n+2} \times \bC 
\]
be the unstable loci with respect to 
the characters $\chi^+, \chi^-$, respectively. 
Let $j^\pm \colon S^\pm \hookrightarrow Y$ 
denote the inclusions. 

\begin{prop}[\cite{don19a, hls16}]
We have a spherical pair 
\begin{equation} \label{eq:windowsphpair}
\overline{\mcG}_{-n-2}=\langle D^b(X), D^b(\pt) \rangle
=\langle \HMF(W), D^b(\pt) \rangle 
\end{equation}
such that the induced autoequivalence on $D^b(X)$ 
is isomorphic to $\ST_{\mcO_X}$. 
\end{prop}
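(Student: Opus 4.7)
The strategy is to package the magic window theorem \cite{bfk19, hl15} with the window-shift calculation of Proposition \ref{prop:window=st}, and then verify the spherical pair axioms by direct comparison.

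First, I would observe that the extended window $\overline{\mcG}_{-n-2} = \mcG_{[-n-2, 0]}$ of length $n+3$ properly contains the two length-$(n+2)$ magic sub-windows $\mcG_{-n-2} = \mcG_{[-n-2,-1]}$ and $\mcG_{-n-1} = \mcG_{[-n-1, 0]}$. By Theorem \ref{thm:window} together with Kn\"{o}rrer periodicity (\ref{eq:KnHS}), the restriction functor $r^+_{-n-2}$ yields a fully faithful embedding $D^b(X) \simeq \mcG_{-n-2} \hookrightarrow \overline{\mcG}_{-n-2}$, and similarly $r^-_{-n-1}$ gives $\HMF(W) \simeq \mcG_{-n-1} \hookrightarrow \overline{\mcG}_{-n-2}$.

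Second, I would complete each embedding to a semi-orthogonal decomposition using the VGIT formalism of \cite[Section 3]{bfk19} (see also \cite[Section 3]{hls16}). The orthogonal complement of each magic sub-window inside $\overline{\mcG}_{-n-2}$ is generated by a single Koszul factorization set-theoretically supported on the corresponding unstable locus $S^\pm$, and in both cases this complement is equivalent to $D^b(\pt)$. This yields the two SODs in the statement.

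For the spherical pair axioms of Definition \ref{def:Spair}, I would verify that the two cross-compositions
\[
D^b(X) \hookrightarrow \overline{\mcG}_{-n-2} \twoheadrightarrow \HMF(W), \qquad D^b(\pt) \hookrightarrow \overline{\mcG}_{-n-2} \twoheadrightarrow D^b(\pt)
\]
are equivalences. The first is essentially the Orlov equivalence $\psi$ of Theorem \ref{thm:Orlov1}, rewritten through the two restriction functors $r^+$ and $r^-$. The second reduces to a direct Hom-computation in $\fact_G(Y,\eta,Q_W)$ between the two boundary Koszul factorizations, using that each one generates a copy of $D^b(\pt)$.

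Finally, to identify the induced autoequivalence: by the remark after Definition \ref{def:Spair}, the spherical pair produces a spherical functor $D^b(\pt) \to D^b(X)$ whose twist is the autoequivalence of $D^b(X)$ obtained by going around the two SODs. Under our identifications this is precisely the window shift $\Phi_{-n-1} = \psi_{-n-2}^{-1}\circ\psi_{-n-1}$. By Proposition \ref{prop:window=st}, $\Phi_{-n-1}$ is a spherical twist around a line bundle on $X$; combining this with the conjugation $\Phi_{w+(n+2)} \simeq ((-) \otimes \mcO_X(1)) \circ \Phi_w \circ ((-) \otimes \mcO_X(-1))$ from \cite[Lemma 3.2]{hls16} and recentering the base window accordingly, one recovers $\ST_{\mcO_X}$ as the induced twist.

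The main obstacle will be the last step: careful tracking of the $G$-equivariant weight conventions through Kn\"{o}rrer periodicity so that the generator of the common $D^b(\pt)$ component corresponds exactly to $\mcO_X$ under the spherical functor, rather than to a non-trivial twist. This amounts to matching the dg-level projections with the explicit spherical-object formula of Proposition \ref{prop:sphfact}.
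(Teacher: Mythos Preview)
Your plan is essentially the paper's proof: it too builds the two SODs from the pair of KN stratifications (citing \cite[Equation (3)]{hls16}, and then \cite[Theorem 4.4, Proposition 4.5]{don19a} for the spherical-pair axioms rather than checking the cross-compositions by hand), and identifies the induced autoequivalence as a window shift via Propositions \ref{prop:sphfact} and \ref{prop:window=st}. The one refinement is that the paper actually works in the window $\overline{\mcG}_{-2n-4}$ (the index already used in the proof of Theorem \ref{thm:Orlov1}) rather than $\overline{\mcG}_{-n-2}$; with that choice Proposition \ref{prop:window=st} gives $\Phi_{-2n-3}=\ST_{\mcO_X((-2n-3)+2n+3)}=\ST_{\mcO_X}$ on the nose, so the weight-tracking ``recentering'' you flag as the main obstacle simply disappears.
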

\begin{proof}
By \cite[Equation (3)]{hls16}, 
applied to the two different KN stratifications 
\[
Y=(Y \setminus S^+) \sqcup S^+
=(Y \setminus S^-) \sqcup S^-,
\] 
we have a pair of semi-orthogonal decompositions: 
\begin{equation} \label{eq:SODbarG}
\overline{\mcG}_{-2n-4}=\langle 
\mcG_{-2n-4}, j^+_*\mcO_{S^+}(-2n-4)
\rangle
=\langle 
\mcG_{-2n-3}, j^-_*\mcO_{S^-}(-n-2) 
\rangle.
\end{equation}

The proofs of \cite[Theorem 4.4, Proposition 4.5]{don19a} show 
that the pair (\ref{eq:SODbarG}) defines a spherical pair. 
Moreover, by Theorem \ref{thm:window} and 
the equivalences (\ref{eq:KnHS}) and (\ref{degree shift3}), 
we have 
\[
\mcG_{-2n-4} \simeq D^b(X), \quad 
\mcG_{-2n-3} \simeq \HMF(W). 
\]
Hence we obtain the spherical pair 
as in (\ref{eq:windowsphpair}). 

It remains to compute the induced autoequivalence on $D^b(X)$. 
By construction, the induced autoequivalence on 
$D^\abs[\fact_{\bC^*}(V, \chi_1, Q_W)] \simeq \mcG_{-2n-4}$ is 
the spherical twist around the object 
$j^-_*\mcO_{S^-}(-n-2)|_{V} 
\simeq \gamma_*\mcO_{\bP^{n+1}}(-n-2)$. 
By Propositions \ref{prop:sphfact} and \ref{prop:window=st}, 
it corresponds to the spherical twist $\ST_{\mcO_X}$ 
under the Kn{\"o}rror periodicity equivalence (\ref{eq:KnHS}). 
\end{proof}

\begin{cor} \label{cor:upgrade}
The perverse schober in Theorem \ref{thm:schober1} 
upgrades to the spherical pair (\ref{eq:windowsphpair}) 
around the point $0 \in \bP^1$. 
\end{cor}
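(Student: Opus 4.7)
The strategy is to compare the perverse schober on the small disk around $0$ induced by the spherical pair (\ref{eq:windowsphpair}) (via \cite[Proposition 3.7]{ks15}, see the remark after Definition \ref{def:Spair}) with the restriction of $\fP$ around $0$, and to check that these two perverse schobers on $(\Delta, 0)$ agree. Since a spherical pair carries strictly more information than its underlying spherical functor, the corollary amounts to exhibiting an upgrade of the local spherical functor data of $\fP$ around $0$ to the semi-orthogonal structures of (\ref{eq:windowsphpair}).

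First I would extract the spherical functor induced by the spherical pair (\ref{eq:windowsphpair}). Using the two semi-orthogonal decompositions of $\overline{\mcG}_{-2n-4}$ recalled in the proof of the preceding proposition, one obtains a spherical functor between $\mcE_-$ and $\mcE_+^{\perp}$, together with the equivalence between the two ``window'' sides, namely $\mcG_{-2n-4} \simeq D^b(X)$ (via Kn\"orrer periodicity (\ref{eq:KnHS})) and $\mcG_{-2n-3} \simeq \HMF(W)$ (via $r^-_{-2n-3}$ of Theorem \ref{thm:window}, combined with (\ref{degree shift3})). Under these identifications, the window-crossing equivalence $\psi_{-2n-4}^{-1} \circ \psi_{-2n-3}$ is matched with the degree-shift autoequivalence $\tau$ via the commutative diagram (\ref{eq:winshif}); Orlov's equivalence from Theorem \ref{thm:Orlov1} then identifies this with $\ST_{\mcO_X} \circ ((-) \otimes \mcO_X(1))$, which is exactly the twist of $\psi^{-1} \circ j_*$ computed in the proof of Theorem \ref{thm:schober1}.

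Having matched the twist, I would then match the spherical functor datum itself. Here the key ingredient is Proposition \ref{prop:sphfact}, which presents the window-shift autoequivalences as spherical twists around the objects $\gamma_*\mcO_{\bP^{n+1}}(-w-n-1)$, together with Proposition \ref{prop:window=st} that identifies them with line-bundle twists on $D^b(X)$ under Kn\"orrer periodicity. These explicit formulas allow one to rewrite the spherical functor coming from (\ref{eq:windowsphpair}) in the form $\psi^{-1} \circ j_*$ (with the sign $\epsilon_0 = -1$ coming from the dual twist), so that the two perverse schober structures on $(\Delta, 0)$ are forced to coincide.

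The main obstacle is precisely this bookkeeping step: tracking the composition of Kn\"orrer periodicity, the window restrictions $r^\pm_w$, and the Orlov equivalence $\psi$ at the level of dg lifts of functors (and not merely at the level of induced autoequivalences), so that the two descriptions of the spherical functor agree as data in $[\dR\mcH om(-,-)]$. Once this compatibility is in place, the corollary follows because both perverse schober structures produce the same boundary local system on $\partial \Delta$, and so they glue consistently with the rest of $\fP$ via Proposition \ref{prop:extension}.
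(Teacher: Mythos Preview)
The paper states this corollary without proof, treating it as an immediate observation following the preceding proposition. Your proposal attempts much more: a literal identification of the perverse schober on $(\Delta,0)$ coming from the spherical pair with the restriction of $\fP$ to a disk around $0$. That identification cannot work as you have written it.

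The decisive gap is the source category. The spherical functor extracted from the spherical pair (\ref{eq:windowsphpair}) via \cite[Proposition~3.7]{ks15} is $\mcE_-\to\mcE_+^{\perp}$, i.e.\ a functor $D^b(\pt)\to\HMF(W)$ (or, on the other side, $D^b(\pt)\to D^b(X)$). By contrast, the spherical functor defining $\fP$ at $0$ is $\psi^{-1}\circ j_*\colon\HMF(W|_{\bC^{n+1}})\to D^b(X)$. For $n\geq 1$ the category $\HMF(W|_{\bC^{n+1}})$ is not equivalent to $D^b(\pt)$; by Orlov's semi-orthogonal decomposition it contains $D^b(C)$ as a component. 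Hence your claim that one can ``rewrite the spherical functor coming from (\ref{eq:windowsphpair}) in the form $\psi^{-1}\circ j_*$'' is false on the level of domains, and no amount of bookkeeping with dg lifts can repair it.

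Consistently with this, the twists do not match either. The preceding proposition computes the autoequivalence of $D^b(X)$ induced by the spherical pair to be $\ST_{\mcO_X}$, whereas the twist of $\psi^{-1}\circ j_*$ is $\ST_{\mcO_X}\circ((-)\otimes\mcO_X(1))$. Your second paragraph conflates the window shift $\Phi_{-2n-3}=\psi_{-2n-4}^{-1}\circ\psi_{-2n-3}$ (which equals $\ST_{\mcO_X}$ by Proposition~\ref{prop:window=st}) with the full degree shift $\tau$; the extra factor $(-)\otimes\mcO_X(1)$ is exactly what the proof of Theorem~\ref{thm:Orlov1} separates out from the window shift.

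The corollary, as the paper uses it, is a qualitative remark: the Orlov equivalence $\psi$ that enters the construction of $\fP$ at $0$ is precisely the equivalence $\mcE_-^{\perp}\xrightarrow{\sim}\mcE_+^{\perp}$ built into the spherical pair, so the VGIT spherical pair gives a richer structure sitting over the schober datum near $0$. Read that way, nothing beyond the preceding proposition is required.
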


\bibliographystyle{alpha}
\bibliography{maths}

\end{document}